\documentclass[12pt,a4paper]{article}
\usepackage{dsfont}
\usepackage{mathtext}
\usepackage[T2A]{fontenc} 
\usepackage[utf8]{inputenc}
\usepackage[russian,english]{babel} 
\usepackage{amsthm,amssymb,amsmath,amsfonts}
\usepackage{mathtools}
\usepackage{enumerate}
\usepackage[indentfirst]{titlesec} 
\usepackage[all]{xy}

\pagestyle{plain}
\newtheorem{Th}{Theorem}[section]
\newtheorem{Prop}[Th]{Proposition}
\newtheorem{Lemma}[Th]{Lemma}
\newtheorem{Cor}[Th]{Corollary}
\newtheorem{Conj}[Th]{Conjecture}

\newtheorem*{Conj*}{Conjecture}
\newtheorem*{Quest*}{Question}
\newtheorem*{Th*}{Theorem}

\theoremstyle{definition} 
\newtheorem{Def}[Th]{Definition}
\newtheorem{Remark}[Th]{Remark}
\newtheorem{Example}[Th]{Example}

\newcommand{\mult}{\operatorname{mult}}

\newcommand{\MC}{\mathds{C}}
\newcommand{\MP}{\mathds{P}}
\newcommand{\MQ}{\mathds{Q}}
\newcommand{\MZ}{\mathds{Z}}

\newcommand{\MA}{\mathds{A}}
\newcommand{\ON}{\operatorname}

\textwidth=17.5cm \textheight=23.6cm

\addtolength{\topmargin}{-1.5cm} 
\addtolength{\oddsidemargin}{-2.0cm}
\addtolength{\evensidemargin}{-2.5cm}

\title{Birational geometry of del Pezzo fibrations with terminal quotient singularities}
\date{}
\author{Igor Krylov}
\parindent=1cm

\begin{document}

\maketitle

\begin{abstract}
Del Pezzo fibrations appear as minimal models of rationally connected varieties. The rationality of smooth del Pezzo fibrations is a well studied question but smooth fibrations are not dense in moduli. Little is known about the rationality of the singular models. We prove birational rigidity, hence non-rationality, of del Pezzo fibrations with simple non-Gorenstein singularities satisfying the famous $K^2$-condition. We then apply this result to study embeddings of $\ON{PSL}_2(7)$ into the Cremona group.
\end{abstract}

\section{Introduction}
Rationality question is an old problem in algebraic geometry. The question was solved in dimension $2$ by Castelnuovo, who discovered an ``if and only if'' conditions for a surface to be rational. In dimension $3$ there is no such criterion.

Since rational varieties are rationally connected, we only need to study the rationality of rationally connected varieties. The minimal models of rationally connected varieties are Mori fiber spaces with rationally connected base. In dimension $2$ these are Hirzebruch surfaces and $\MP^2$, which are rational. In dimension $3$ these are Fano varieties, conic bundles over rational surfaces, and del Pezzo fibrations over the projective line. In this paper we study the latter class of varieties.

\begin{Th}
Let $\pi:X\to \MP^1$ be a del Pezzo fibration of degree $2$ with only $\frac{1}{2}(1,1,1)$-singularities (simplest terminal quotient singularities). Suppose $\ON{Pic}(X)=\MZ\oplus\MZ$ and suppose $X$ satisfies $K^2$-condition, that is $K_X^2$ is not in the interior of the Mori cone $\overline{NE}(X)$. Suppose also that fibers of $\pi$ containing singularities can be embedded into $\MP(1_x,1_y,1_z,2_w)$ as quartic cones $q_4(x,y,z)=0$. Then $X$ is birationally rigid, in particular not rational.
\end{Th}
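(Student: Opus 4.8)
The plan is to apply the Noether--Fano--Iskovskikh method, in the form adapted to Mori fiber spaces with terminal quotient singularities, and to exclude all maximal singularities. Suppose, for contradiction, that $X$ is not birationally rigid. Then there is a birational map $\chi\colon X\dashrightarrow Y$ onto a Mori fiber space $Y\to S$ that is not a biregular isomorphism of Mori fiber spaces. Fix a very ample complete linear system $\mathcal H_Y$ on $Y$ and let $\mathcal M=\chi^{-1}_*\mathcal H_Y$ be its strict transform, a mobile linear system on $X$. Since $\Pic(X)\otimes\MQ$ is spanned by $-K_X$ and the class $F$ of a fiber, write $\mathcal M\sim_{\MQ}-nK_X+lF$ with $n\in\MQ_{>0}$ and $l\in\MQ$. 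The Noether--Fano inequality then guarantees that the pair $(X,\tfrac1n\mathcal M)$ is not canonical: there is a divisorial valuation $E$ over $X$ --- a \emph{maximal singularity} --- whose center $B=\ON{center}_X(E)$ satisfies $\ON{ord}_E(\mathcal M)>n\,a(E,X)$, where $a(E,X)$ denotes the discrepancy. The whole argument reduces to showing that no such $B$ can exist.

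I would then classify $B$. As the general fiber of $\pi$ is an irreducible del Pezzo surface of degree $2$ and $\Pic(X)$ has rank $2$ (so every fiber is irreducible), $B$ is either horizontal, with $\pi(B)=\MP^1$, or vertical, i.e.\ contained in a single fiber $X_t$; in the vertical case $B$ is a curve, a smooth point of $X$, or one of the terminal $\tfrac12(1,1,1)$ points. Horizontal centers and centers at smooth points of fibers are disposed of by restriction to a general fiber: the induced mobile system $\mathcal M|_{X_t}\sim-nK_{X_t}$ has self-intersection $(-nK_{X_t})^2=2n^2$ on the degree-$2$ del Pezzo surface $X_t$, which is strictly below the threshold $4n^2$ of the surface (Corti) inequality, so $\mathcal M|_{X_t}$ cannot have a maximal singularity, and inversion of adjunction rules out these centers.

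Vertical curves in fibers are where the $K^2$-condition enters. For such a center $B$ the effective $1$-cycle $\mathcal M^2=M_1\cdot M_2$ (with $M_1,M_2\in\mathcal M$ general) acquires a vertical component of multiplicity exceeding $n^2$, so that $\mathcal M^2\cdot(-K_X)\ge \mult_B(\mathcal M^2)\,\bigl((-K_X)\cdot B\bigr)>n^2\bigl((-K_X)\cdot B\bigr)$. On the other hand $\mathcal M^2\cdot(-K_X)=n^2(-K_X)^3+2nl\,d$ with $d=(-K_X)^2\cdot F=2$, and the hypothesis $K_X^2\notin\ON{int}\,\overline{NE}(X)$ controls the class of $\mathcal M^2$ relative to the extremal rays of $\overline{NE}(X)$, bounding $(-K_X)^3$ and forcing $l\ge0$. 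Comparing the two expressions yields a numerical contradiction, excluding vertical curves.

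The remaining and most delicate case --- the main obstacle, and the genuinely new point beyond the classical smooth theory --- is a maximal singularity at a terminal $\tfrac12(1,1,1)$ point $P$. Here I would use the hypothesis that the singular fiber through $P$ embeds into $\MP(1_x,1_y,1_z,2_w)$ as a quartic cone $q_4(x,y,z)=0$ with vertex at $P=[0:0:0:1]$. The Kawamata (weighted) blow-up of $P$ has exceptional divisor $E\cong\MP^2$ of discrepancy $\tfrac12$, so a maximal singularity at $P$ forces $\ON{ord}_E(\mathcal M)>\tfrac n2$; translating this into multiplicities of $\mathcal M$ along the rulings of the cone, and intersecting on the blow-up where the quotient contributions must be tracked carefully, again produces a contradiction (equivalently, the blow-up starts a Sarkisov link that terminates back on $X$). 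Once every center $B$ is excluded, $\chi$ is a birational isomorphism of Mori fiber spaces; since the generic fiber is a minimal del Pezzo surface of degree $2$ over the function field of $\MP^1$, whose birational self-maps reduce to the biregular Geiser involution together with automorphisms, the Mori fiber space structure $\pi\colon X\to\MP^1$ is unique in its birational class. Hence $X$ is birationally rigid, and in particular not rational.
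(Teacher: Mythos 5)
Your outline misplaces the central difficulty, and the step you dismiss as routine is precisely the one the paper spends three sections on. In the paper the half-point $Q$ itself is the \emph{easy} center: because the fiber through $Q$ is a quartic cone, the curves of its ruling meet a general $D\in\mathcal{M}$ in $\frac{n}{2}$, and Lemma 2.12 (a one-line projection-formula computation on the blow-up of $Q$, using $a(E_Q,X,0)=\frac12$ and Kawamata's result) shows at once that $(X,\frac1n\mathcal{M})$ is canonical at $Q$; no Sarkisov link or Kawamata-blow-up analysis is needed, and your sketch for this case (``translating this into multiplicities \dots again produces a contradiction'') is not an argument. The genuinely hard case, which your proposal never touches, is a non-canonical center at a \emph{nonsingular} point $P$ lying in a fiber that contains a half-point. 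Your claim that smooth-point centers are ``disposed of by restriction to a general fiber'' fails twice: the center sits in one specific fiber, so the general fiber sees nothing of it; and even the correct argument at the specific fiber requires Pukhlikov's supermaximal-singularity refinement (Proposition 4.3(ii): centers $P_i$ in distinct fibers with $-\sum a(\nu_i,X,\frac1n\mathcal{M})/\nu_i(F_i)>\gamma$), because fiberwise maps genuinely produce non-canonical mobile systems, so a bare maximal singularity cannot be excluded. With that refinement one applies the Corti inequality to the pair $(X,\frac1n\mathcal{M}-\gamma F)$ and contradicts the bounds $\mult_P Z_h\leqslant\deg Z_h=2n^2$ and $\deg Z_v<4n^2\gamma$ --- but only when the fiber through $P$ contains no half-point. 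When it does, the cone carries curves of degree $\frac12$ (``half-lines''), the multiplicity bound degrades to $\mult_P Z_v\leqslant 8\gamma n^2$ (Remark 4.12), and the contradiction evaporates. This is exactly why the quartic-cone hypothesis is in the statement, and it is resolved by the ``ladder'': blow up $Q$, then the proper transform $L_0$ of the half-line through $P$, then the exceptional section of the resulting $\mathds{F}_2$, and so on; one tracks the decomposition of $Z=D_1\cdot D_2$ into horizontal, vertical and exceptional pieces up the ladder (Section 6) and applies the Corti inequality at the center of $\nu$ on $X^{(M)}$ (Section 7, cases A--C).

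Two further gaps. Sections of $\pi$ cannot be excluded at all --- for a section the general-fiber restriction gives $2n^2>n^2$, no contradiction --- and must be \emph{untwisted} by the Bertini/Geiser-type involutions $\chi_C$ of Proposition 4.5, which strictly decrease $n$ and drive an induction; you invoke the Geiser involution only at the very end, for uniqueness of the fibration structure, not where it is needed. And your treatment of vertical curve centers via the class of $\mathcal{M}^2$ is too loose to check; in the paper curves through the half-point are handled by Corollary 2.11 (canonicity at $Q$ propagates to curves through $Q$), and the remaining curves in a cone fiber by a degree count against $-K_F\equiv 4L$.
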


\subsection{Birational geometry of del Pezzo fibrations}
We say that a del Pezzo fibration has degree $n$ if the general fiber is a del Pezzo surface of degree $n$. It is well known that all del Pezzo fibrations of degree $\geqslant 5$ are rational. The rationality of smooth del Pezzo fibrations of lower degree has been studied extensively, as a result a nearly complete solution to the problem has been obtained. See \cite{Alekseev} and \cite{Costya-dP4} for rationality and 
\cite{HT} for stable rationality of fibrations of degree $4$. Rationality for degrees $1$, $2$, and $3$ has been studied in \cite{Pukh123}, \cite{Grin1}, \cite{Grin2}, and \cite{Grin3}.

We say that a del Pezzo fibration is birationally rigid if it has only one Mori fiber space structure in its birational class (Definition 5.1). In particular it means that it is not birational to conic bundles and Fano varieties and therefore is not rational.

\begin{Th}[\cite{Pukh123}]
Let $\pi:X\to \MP^1$ be a smooth del Pezzo fibration of degree $1$ or $2$. Suppose $\ON{Pic}(X)=\MZ\oplus\MZ$ and suppose $X$ satisfies the $K^2$-condition, that is $K_X^2$ is not in the interior of Mori cone $\overline{NE}(X)$. Then the variety $X$ is birationally rigid, in particular $X$ is not rational.
\end{Th}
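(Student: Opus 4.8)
The plan is to run the method of maximal singularities, i.e. the Noether--Fano--Iskovskikh programme. Suppose $X$ were not birationally rigid; then there is a birational map $\chi\colon X\dashrightarrow X'$ onto a Mori fiber space that is not part of a fiberwise isomorphism over the identity of $\MP^1$. Pulling back a very ample system from $X'$ I obtain a mobile linear system $\mathcal{M}$ on $X$, and after normalizing I may write $\mathcal{M}\subset|-nK_X+lF|$ with $n>0$, where $F$ is the class of a fiber of $\pi$. The Noether--Fano inequality then tells me that the failure of $\chi$ to respect the fibration forces the pair $(X,\tfrac1n\mathcal{M})$ to be non-canonical: there is an exceptional divisor $E$ over $X$ with $\ON{ord}_E\mathcal{M}>n\,a(E,X)$. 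The whole argument reduces to excluding every such maximal singularity, which shows that $\pi$ is the only Mori fiber space structure in the birational class.

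I would then classify the center $B=\ON{Center}_X(E)$, which is either a curve or a point. If $B$ is a curve contained in a fiber $F_0$, then $\mult_B\bigl(\mathcal{M}|_{F_0}\bigr)>n$, and $B$ is a curve on the del Pezzo surface $F_0$; comparing this with $\bigl(\mathcal{M}|_{F_0}\bigr)^2=n^2K_{F_0}^2=n^2d\le 2n^2$ and using the rigidity of the generic fiber (two-dimensional Noether--Fano) rules this case out. If instead $B$ is horizontal, i.e. a multisection of $\pi$, I bound the self-intersection cycle $\mathcal{M}^2$ against $-K_X$ and $F$; the $K^2$-condition, which confines the effective class $(-K_X)^2$ to the boundary of $\overline{NE}(X)$, makes the resulting numerical inequality impossible.

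The principal case, and the heart of the proof, is when $B=p$ is a point lying in a fiber $F_p$. Here I invoke the $4n^2$-inequality for a maximal singularity over a smooth point of a threefold: the self-intersection cycle $Z=\mathcal{M}^2$ satisfies $\mult_p Z>4n^2$. Splitting $Z=Z_h+Z_v$ into horizontal and vertical parts, the horizontal part meets the fiber in $Z_h\cdot F_p=\bigl(\mathcal{M}|_{F_p}\bigr)^2=n^2K_{F_p}^2=n^2d\le 2n^2$, while $Z_v\cdot F_p=0$. Since $\mult_p Z_h\le Z_h\cdot F_p\le 2n^2$, the excess is forced into the vertical part, giving $\mult_p Z_v>4n^2-2n^2=2n^2$. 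It is exactly here that the hypothesis $d\le 2$ is used, to make $n^2d$ strictly smaller than $4n^2$.

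To finish, I must show that the vertical cycle $Z_v\subset F_p$ cannot carry multiplicity exceeding $2n^2$ at a single point, and this is precisely what the $K^2$-condition supplies. Since $(-K_X)^2$ is effective and lies on the boundary of $\overline{NE}(X)$, and since $(-K_X)^2\cdot F=d>0$ rules out $F$ as the supporting divisor, there is a nef class $N=-K_X+m_0F$ on the boundary of the nef cone with $N\cdot(-K_X)^2=0$. Pairing $N$ with $Z$ then bounds $N\cdot Z_v$, hence the admissible vertical content of $Z$, in terms of $n$ alone; as $N$ restricts to a sub-ample class on the del Pezzo fiber away from its extremal curve, this numerical budget is incompatible with a point of multiplicity $>2n^2$, and the contradiction closes the point case. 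I expect this last step to be the main obstacle: without the $K^2$-condition vertical components genuinely can absorb the excess multiplicity and the variety becomes non-rigid, so the crux is to extract from ``$(-K_X)^2$ on the boundary of $\overline{NE}(X)$'' a sharp enough estimate on effective vertical $1$-cycles, together with the bookkeeping that controls $l$ relative to $n$.
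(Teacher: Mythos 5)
Your overall framework (Noether--Fano, classification of maximal centers, splitting $Z=\mathcal{M}^2$ into horizontal and vertical parts) is the right one, but there are two genuine gaps, each of which breaks the argument at a point the paper's recalled proof (Section 4) treats quite differently.

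First, horizontal centers cannot be ``ruled out'' by a numerical inequality. For del Pezzo fibrations of degree $1$ and $2$ the pair $(X,\frac1n\mathcal{M})$ genuinely can fail to be canonical along a section $C$ of $\pi$: these centers are not excluded but \emph{untwisted} by the Bertini/Geiser-type birational involutions $\chi_C$ (Proposition 4.5), which replace $\mathcal{M}$ by $\chi_C(\mathcal{M})\subset|-n'K_X+n'\gamma'F|$ with $n'<n$; one then iterates. This untwisting step is indispensable --- it is exactly why birational rigidity here is defined up to fiberwise equivalence rather than up to isomorphism --- and your proposal has no mechanism for it. (Vertical curves and multisections of degree $\geqslant 2$ are handled as you suggest, by degree estimates on the fiber; that part is fine.)

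Second, in the point case your claimed bound $\mult_p Z_v\leqslant 2n^2$ is false. Pairing with a nef class $N=-K_X+m_0F$ killing $K_X^2$ only bounds the \emph{total vertical degree}: $\sum_t\deg Z^v_t\leqslant 4n^2\gamma$ with $\gamma\approx l/n$ (Lemma 4.7), and this budget grows with $l$, so for large $l$ a single fiber can absorb vertical multiplicity far exceeding $2n^2$ and the plain $4n^2$-inequality gives no contradiction. The paper's proof closes this by the method of \emph{supermaximal singularities}: Proposition 4.3 produces non-canonical centers $P_i$ in distinct fibers with $\sum\gamma_i>\gamma$ where $\gamma_i=-a(\nu_i)/\nu_i(F_i)$, a pigeonhole argument (Corollary 4.9) finds one fiber with $\deg Z^v_i<4n^2\gamma_i$, and then the \emph{Corti inequality for the pair} $(X,\frac1n\mathcal{M}-\gamma_iF_i)$ --- whose right-hand side is $4n^2(1+\gamma_i t)$, not $4n^2$ --- beats the bounds $\mult_P Z_h\leqslant 2n^2$ and $\mult_P Z_v\leqslant 2\deg Z^v_i$. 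Both the strengthened inequality and the pigeonhole selection are missing from your sketch, and without them the ``excess multiplicity'' argument does not close.
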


Unfortunately, Mori fiber spaces do not have to be smooth, in general they have terminal singularities. However in the case of conic bundles we only need to study smooth conic bundles. Given a conic bundle one can always find the standard model for it, which is smooth and minimal \cite{Sarkisov}. But it is not true for Fano varieties and del Pezzo fibrations. There are families of Fano threefolds whose minimal models have terminal quotient singularities and are unique, thus cannot be improved. The famous 95 families of index $1$ Fano varieties have this property (\cite{CPR}, \cite{ChP}), and more examples have been found recently (\cite{Okada14}, \cite{HO}, \cite{HZ}). 

Del Pezzo fibrations are in-between: there are good models, but they are not necessarily smooth or even Gorenstein \cite{Corti-models}. The good model of del Pezzo fibrations of degree $2$ are $2$-Gorenstein. Hence good models may not admit smoothing since the singularities of $2$-Gorenstein varieties are degenerations of the $\frac{1}{2}(1,1,1)$-singularity which does not admit smoothing. In particular it means that smooth del Pezzo fibrations are not dense in moduli. Varieties with only $\frac{1}{2}(1,1,1)$-singularities in a family of $2$-Gorenstein varieties are the equivalent of smooth varieties in a family of Gorenstein varieties.

It is expected that del Pezzo fibrations of degree $1$ also have models with good singularities, these models have to be $6$-Gorenstein. Therefore for del Pezzo fibrations of degree $1$ it is natural to consider $\frac{1}{2}(1,1,1)$ and $\frac{1}{3}(1,1,2)$-singularities \cite{Corti-models}. The following is a well-known and widely believed conjecture.

\begin{Conj}
Theorem $1.2$ be extended to del Pezzo fibrations of degree $2$ with only singularities of the type $\frac{1}{2}(1,1,1)$, and to del Pezzo fibrations of degree $1$ with only singularities of the type $\frac{1}{2}(1,1,1)$ and $\frac{1}{3}(1,1,2)$.
\end{Conj}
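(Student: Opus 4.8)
The plan is to prove birational rigidity by the Noether--Fano method together with the technique of maximal singularities, upgrading Pukhlikov's proof of Theorem~1.2 so that it tolerates the terminal quotient singularities. Suppose $X$ were not birationally rigid. Then there is a birational map $\chi\colon X\dashrightarrow X'$ onto a Mori fiber space $X'$ that does not descend to an isomorphism of fibrations over $\MP^1$. Choosing a very ample mobile linear system on $X'$ and taking its strict transform gives a mobile system $\mathcal{M}\subset\lvert -nK_X+lF\rvert$ on $X$, where $F$ is the fiber class. The Noether--Fano inequality then guarantees that the log pair $\bigl(X,\tfrac1n\mathcal{M}\bigr)$ is not canonical, so there is a divisorial valuation $E$ over $X$ with center $Z=\ON{center}_X(E)$ at which the canonical threshold is violated. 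The aim is to prove that no such $Z$ can occur; this forces $\chi$ to preserve the fibration and hence shows that $X$ is rigid.

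The first step is to exploit the $K^2$-condition to control $l$ and to eliminate ``fiberwise untwisting.'' I would show that $K_X^2\notin\operatorname{int}\overline{NE}(X)$ lets us reduce to the case $l\geq0$ and, crucially, rules out a maximal singularity whose center is an entire fiber: such a vertical singularity would place $K_X^2$ in the interior of the Mori cone, contrary to hypothesis. This is precisely where the condition enters geometrically, and it reduces the problem to the cases $Z$ a horizontal curve, $Z$ a smooth point, and $Z$ a quotient singular point.

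Curves and smooth points I would dispose of by restriction and the $4n^2$-inequality. If $Z$ is a horizontal multisection, restricting $\mathcal{M}$ to a general fiber $S_t$, a smooth del Pezzo surface of degree $2$, produces a system in $\lvert -nK_{S_t}\rvert$ with a point of multiplicity exceeding $n$, which conflicts with $(-K_{S_t})^2=2$ once the $K^2$-condition is fed in. If $Z$ is a smooth point lying on a general fiber, Corti's inequality $\mult_Z\mathcal{M}^2>4n^2$ contradicts the global bound $\mathcal{M}^2\cdot F=2n^2$, since the local self-intersection cannot exceed its intersection with the fiber through $Z$. The numerical slack $4n^2>2n^2$ is exactly what makes degree $2$ tractable.

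The main obstacle, and the one genuinely new ingredient beyond Theorem~1.2, is the analysis at the $\tfrac12(1,1,1)$ points, where Corti's inequality must be replaced by a sharp local inequality adapted to the terminal quotient singularity. This is where the embedding hypothesis is indispensable: since a singular fiber sits in $\MP(1_x,1_y,1_z,2_w)$ as the quartic cone $q_4(x,y,z)=0$ with the $\tfrac12(1,1,1)$-point as its vertex, I can perform the weighted blow-up of that point, compute the discrepancy and the order of $\mathcal{M}$ along the exceptional $\MP^2$ explicitly, and derive the quotient analogue of the $4n^2$-inequality in correctly normalized form. The crux is to show that this local inequality is incompatible with the intersection numbers permitted by the $K^2$-condition; the extra factor of $\tfrac12$ coming from the singularity is what must be tracked with care. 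For the degree $1$ part of the conjecture, and for the $\tfrac13(1,1,2)$ points, the same scheme applies with the corresponding weighted blow-ups, at the cost of establishing a second family of local inequalities and re-examining the singular fibers.
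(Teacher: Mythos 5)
The statement you are proving is stated in the paper as a \emph{conjecture}: the paper does not prove it, and only establishes the special case (Theorem 1.1) in which every fiber through a singular point embeds in $\MP(1,1,1,2)$ as a quartic cone $q_4(x,y,z)=0$. Your sketch therefore claims more than the paper does, and it does not supply the arguments that would be needed to go further. One concrete gap: you assert that the aim is ``to prove that no such $Z$ can occur,'' but non-canonical centers along sections of $\pi$ genuinely do occur and cannot be excluded; they must be \emph{untwisted} by birational involutions $\chi_C$ (Proposition 4.5 of the paper), which is built into the very definition of rigidity used here. Your sketch omits this step entirely.

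More seriously, you locate the main difficulty at the $\frac{1}{2}(1,1,1)$-points themselves and propose to handle it by a weighted blow-up and a ``quotient analogue of the $4n^2$-inequality.'' In the paper the half-points are actually the easy part: because the singular fiber is a cone, its ruling curves meet $-K_X$ by $\frac{1}{2}$, so Lemma 2.12 shows the pair is canonical at every half-point (and, by Corollary 2.11, at every curve through one), with no new local inequality needed — and this is precisely where the cone hypothesis enters, which is why the paper cannot dispense with it. The genuine obstruction is at \emph{nonsingular} points of a fiber containing a half-point: the presence of ``half-lines'' of degree $\frac{1}{2}$ weakens the bound on the multiplicity of the vertical cycle to $\mult_P Z_v\leqslant 8\gamma n^2$, which no longer contradicts Corti's inequality (Remark 4.12). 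The paper's new ingredient — the ladder of blow-ups along the half-line and its successive exceptional sections, with the multiplicity bookkeeping of Sections 5--7 — is exactly what closes this gap, and nothing in your sketch plays that role. Finally, the degree-1 case with $\frac{1}{3}(1,1,2)$-points is dismissed with ``the same scheme applies,'' which is not an argument; the conjecture remains open even in degree 2 without the cone hypothesis.
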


I prove a part of this conjecture in Theorem 1.1. Smooth del Pezzo surfaces of degree $2$ can all be embedded as quartics into $\MP(1_x,1_y,1_z,2_w)$, therefore the requirement of Theorem 1.1 that bad fibers can be embedded into $\MP(1,1,1,2)$ is quite natural. The equation of bad fiber is of the form $q_4(x,y,z)+wq_2(x,y,z)=0$, thus varieties we consider are special. The specialty condition is there only for technical reasons, there is no conceptual reason for this. The same technique should work to prove the conjecture.

\begin{Example}
Let $X$ be a hypersurface of bidegree $(4,l)$ in $\MP(1_x,1_y,1_z,2_w)\times\MP^1_{u,v}$. Then $X$ is a del Pezzo fibration of degree $2$. Since $-K_X$ is of bidegree $(1,2-l)$, $X$ satisfies the $K^2$-condition when $l\geqslant 2$. In particular it is not satisfied for $l=0$, that is when $X$ is a direct product. Let the equation of $X$ be $p_l(u,v)w^2=q_{4,l}(x,y,z;u,v)$, where $p$ and $q$ are generic polynomials of degree $l$ in $u,v$ and degree $4$ in $x,y,z$. Then the fibers containing singularities are quartic cones and $X$ satisfies the assumptions of Theorem 1.1.
\end{Example}

This example aligns quite well with \cite[Corollary~7.4]{Shokurov}. This corollary states that a fibrations with sufficiently many singular birationally nonsmoothable fibers is weakly rigid. Here the fiber over $t\in \MP^1$ is birationally nonsmoothable if there are no fiberwise maps to a fibration $\pi_Y: Y\to \MP^1$ such that a fiber of $\pi_Y$ over $t$ is smooth. It is unclear how to verify this condition, but we believe that $X$ satisfies it for sufficiently big $l$.

The corollary also states that a del Pezzo fibration is weakly rigid if the $K^2$-condition is preserved under fiberwise maps. It is not known, how to check this. However there are varieties which satisfy this condition trivially: they do not admit fiberwise maps which are not isomorphisms \cite[Theorem~1.1]{RCFT}.

We can apply Theorem 1.1 to study finite subgroups of Cremona group of rank three.

\subsection{Conjugacy classes of simple subgroups of Cremona group}
The \emph{Cremona group} $\ON{Cr}_n$ of rank $n$ is a group of birational transformations of $\MP^n$. It is clear that $\ON{Cr}_1=\ON{PGL}_2(\MC)$ and it is classically known that $\ON{Cr}_2=\langle \ON{PGL}_3(\MC), \sigma \rangle$, where 
\begin{align*}
\sigma:(x:y:z)\mapsto \Big(\frac{1}{x}:\frac{1}{y}:\frac{1}{z}\Big).
\end{align*}
Unfortunately, there is little hope for finding generators of the Cremona group over $\ON{PGL}_{n+1}(\MC)$ for $n\geqslant 3$. The Cremona group of rank $n$ is a lot more complicated and there are too many generators over $\ON{PGL}_{n+1}(\MC)$ to study them effectively for $n\geqslant 3$ \cite{Pan}.

Recently other ways to study the Cremona group have been considered. The topology of $\ON{Cr}_n$ have been studied in \cite{Blanc}. We could also study subgroups of the Cremona group, for example, it has been shown that $\ON{Cr}_n$ is not a simple group \cite{CL}.

In this paper we focus on finite subgroups. Finite subgroups of the $\ON{Cr}_2$ have been classified up to conjugation in \cite{Cr2}. To acquire the classification one can transfer this algebraic problem to the geometric language. 

\begin{Lemma}[{\cite[Proposition~1.2]{GFanoI}}]
Let $X$ be a rationally connected variety and let $G$ be a finite subgroup of $\ON{Bir} X$. Then there exists a variety $Y$ with a regular $G$-action and a $G$-equivariant birational map $\varphi:X\dasharrow Y$ such that
\begin{itemize}
	\item $Y$ is terminal and $G\MQ$-factorial, that is $G$-invariant divisors on $Y$ are $\MQ$-factorial,
	\item there is a $G$-equivariant map $\pi:Y\to Z$ to a variety of lower dimension such that a generic fiber of $Y$ is a Fano variety and a relative $G$-invariant Picard rank is $\rho^G(Y/Z)=1$.
\end{itemize}
\end{Lemma}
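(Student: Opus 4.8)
The plan is to deduce the statement from the $G$-equivariant minimal model program, in three stages: regularize the action, run the equivariant MMP, and read off the Mori fiber space structure from its output.

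First I would replace $X$ by a model carrying a biregular $G$-action. Starting from a smooth projective model birational to $X$ on which $G$ acts by birational automorphisms, one iteratively blows up $G$-invariant centres supported on the indeterminacy loci of the (finitely many) elements of $G$ and then applies $G$-equivariant resolution of singularities. This produces a smooth projective variety $Y_0$ on which $G$ acts faithfully and biregularly, together with a $G$-equivariant birational map $X\dashrightarrow Y_0$. Since rational connectedness is a birational invariant, $Y_0$ is rationally connected; in particular it is uniruled, so $K_{Y_0}$ is not pseudoeffective. Being smooth, $Y_0$ is terminal, $\MQ$-factorial, and a fortiori $G\MQ$-factorial.

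Next I would run the minimal model program in the category of projective $G$-varieties, starting from $Y_0$. The essential point is that each step can be carried out $G$-equivariantly: $G$ acts on $\overline{NE}(Y_0)$ preserving $K_{Y_0}$, so it permutes the $K_{Y_0}$-negative extremal rays, and the $G$-orbit of such a ray spans a $G$-invariant $K_{Y_0}$-negative extremal face. Contracting this face is automatically $G$-equivariant; a divisorial or fiber-type contraction yields a new $G\MQ$-factorial terminal $G$-variety, while a small contraction is followed by its flip, which is again $G$-equivariant because flips are canonically determined by the contraction. Each step either drops $\rho^G$ or decreases the relevant termination invariant, so by termination of the MMP (classical in dimension three, and via BCHM in general) the process halts after finitely many steps while preserving terminal and $G\MQ$-factorial singularities.

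Finally I would identify the output. Because $K_{Y_0}$ is not pseudoeffective, the program cannot terminate with a minimal model (on which $K$ would be nef); it must terminate with a fiber-type contraction $\pi\colon Y\to Z$ with $\dim Z<\dim Y$. By construction $\pi$ contracts a single $G$-invariant extremal face spanned by one $G$-orbit of rays, so $\rho^G(Y/Z)=1$ and $-K_Y$ is $\pi$-ample. A general fiber $F$ is then $\MQ$-factorial, terminal, with $-K_F$ ample and $G$-invariant relative Picard rank one, i.e. a Fano variety, which gives the required $Y$. The main obstacle is the second stage: guaranteeing that every contraction and, above all, every flip can be performed $G$-equivariantly — which rests on choosing $G$-invariant extremal faces and on the canonicity of flips — and that the resulting equivariant program terminates.
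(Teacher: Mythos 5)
This lemma is not proved in the paper: it is quoted as a black box from \cite[Proposition~1.2]{GFanoI}, so there is no in-paper proof to compare against. Your argument is the standard one and is essentially what the cited reference does: regularize the action, run the $G$-equivariant MMP by contracting $G$-invariant extremal faces (flips being $G$-equivariant because they are canonically determined by the small contraction), and use the fact that $K$ is not pseudoeffective on a rationally connected variety to conclude that the program must end in a fiber-type contraction with $\rho^G(Y/Z)=1$ and Fano generic fiber. The one step I would tighten is the regularization: blowing up indeterminacy loci of the individual elements of $G$ one at a time does not obviously yield a single model on which the whole group acts biregularly; the clean construction is to take the normalization of the closure of the graph of $x\mapsto\big(g(x)\big)_{g\in G}$ inside $\prod_{g\in G}X$, which carries a tautological regular $G$-action, and then pass to a $G$-equivariant resolution of singularities.
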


Variety $Y$ is called $G$-Mori fiber space. The classification of subgroups of $\ON{Cr}_n$ up to conjugation is equivalent to the classification of the rational $G$-Mori fiber space up to $G$-equivariant birational equivalence.

For a while very little was known about finite subgroups of Cremona group of higher rank. Serre even asked if every group can be embedded into the $\ON{Cr}_3$ \cite[Question~6.0]{Serre}. However several striking results have been found recently, among which is the following theorem of Prokhorov answering the question of Serre.

\begin{Th}[{\cite[Theorem~1.3]{Cr3}}]
Let $G$ be a finite simple non-abelian group. Then $\ON{Cr_3}(\MC)$ has a subgroup isomorphic to $G$ if and only if $G$ is one of the following groups: $\mathcal{A}_5$, $\ON{PSL}_2(7)$, $\mathcal{A}_6$, $\mathcal{A}_7$, $\ON{PSL}_2(8)$, or $\ON{PSU}_4(2)$.
\end{Th}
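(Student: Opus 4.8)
The plan is to prove both implications through the equivariant minimal model program recorded in Lemma 1.2, which reduces any faithful action of a finite group on a rational threefold to an action on a rational $G$-Mori fiber space with $\rho^G=1$ over the base. For the ``if'' direction I would simply exhibit, for each of the six groups, one rational threefold carrying a faithful action. For $\mathcal{A}_5$ and $\ON{PSL}_2(7)$ this is immediate: both have three-dimensional faithful representations, hence act linearly on $\MP^2\subset\MP^3$ and so embed into $\ON{Aut}(\MP^3)\subset\ON{Cr}_3(\MC)$; for $\mathcal{A}_6$ one uses the triple cover $3.\mathcal{A}_6\subset\ON{SL}_3(\MC)$, giving a projective action on $\MP^2$ (the Valentiner action). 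For $\ON{PSU}_4(2)\cong\ON{PSp}_4(3)$ one invokes its classical action on the rational Burkhardt quartic threefold, and for $\mathcal{A}_7$ and $\ON{PSL}_2(8)$ one produces explicit rational Fano threefolds (their smallest faithful projective representations have dimensions $6$ and $7$, so the relevant varieties live in $\MP^5$ and $\MP^6$).

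For the ``only if'' direction, suppose $G$ is finite simple non-abelian with $G\hookrightarrow\ON{Cr}_3(\MC)$. By Lemma 1.2 there is a rational $G$-Mori fiber space $\pi:Y\to Z$ with $\rho^G(Y/Z)=1$, and I would split into the three cases $\dim Z\in\{2,1,0\}$. Since $G$ is simple, the kernel of the induced action on $Z$ is trivial or all of $G$. If $\dim Z=2$, then either $G$ acts faithfully on the rational surface $Z$, so $G\hookrightarrow\ON{Cr}_2(\MC)$ and the classification \cite{Cr2} forces $G\in\{\mathcal{A}_5,\mathcal{A}_6,\ON{PSL}_2(7)\}$, or $G$ acts trivially on $Z$ and hence on the generic conic fiber, embedding into $\ON{PGL}_2(\overline{\MC(Z)})$, whose only simple non-abelian finite subgroup is $\mathcal{A}_5$. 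If $\dim Z=1$ then $Z=\MP^1$: a faithful action gives $G\hookrightarrow\ON{PGL}_2(\MC)$, forcing $G=\mathcal{A}_5$, while a trivial action on the base makes $G$ act on the generic del Pezzo fiber over $\MC(t)$, and the analysis of the (birational) automorphisms of these surfaces returns only groups already listed.

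The decisive case is $\dim Z=0$, where $Y$ is a rational $G\MQ$-Fano threefold. Here $G$ acts on the spaces $H^0(Y,-mK_Y)$, and boundedness of terminal $\MQ$-Fano threefolds bounds their dimensions, hence bounds the dimension of a faithful projective representation of $G$. I would then invoke the classification of finite simple groups together with the known minimal faithful representation dimensions to reduce to a finite list of candidates, and treat each survivor by explicit geometry, using birational (super)rigidity to discard those whose only $G$-Fano models are non-rational. The main obstacle is precisely this last step: the representation-theoretic bound by itself does not eliminate every spurious group — for instance $\ON{PSL}_2(11)$ has a small faithful representation yet admits no rational $G$-Fano model — so one must combine the dimension bound with delicate equivariant rigidity and rationality arguments, of exactly the kind that Theorem 1.1 supplies for del Pezzo fibrations.
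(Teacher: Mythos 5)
The paper does not actually prove this statement: it is imported verbatim from Prokhorov's work \cite{Cr3} and used as a black box, so there is no internal proof to compare yours against. That said, your outline is a recognizable reconstruction of the strategy Prokhorov follows: reduce via the equivariant MMP (Lemma 1.4 of the paper) to a rational $G$-Mori fiber space $\pi:Y\to Z$ with $\rho^G(Y/Z)=1$, split on $\dim Z$, use simplicity of $G$ to force the action on the base to be faithful or trivial, dispatch the conic bundle and del Pezzo fibration cases via the $\ON{Cr}_2$ classification \cite{Cr2} and the finite subgroups of $\ON{PGL}_2$, and concentrate the difficulty in the $G$-Fano case, where boundedness and representation-theoretic bounds cut the candidates down to a finite list.

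There are, however, two concrete problems. First, a factual slip in the ``if'' direction: the minimal faithful \emph{projective} representation of $\mathcal{A}_7$ has dimension $4$, coming from the $4$-dimensional representation of the double cover $2.\mathcal{A}_7$, so $\mathcal{A}_7$ embeds into $\ON{PGL}_4(\MC)=\ON{Aut}(\MP^3)$ directly; the dimension $6$ you quote is the minimal faithful \emph{linear} representation of $\mathcal{A}_7$ itself, and an action on $\MP^5$ would still leave you to produce an invariant rational threefold there, which is not automatic. Second, and decisively: essentially all of the content of the theorem sits in the step you explicitly defer, namely showing that every simple group which passes the representation-theoretic bound but is absent from the list (your own example $\ON{PSL}_2(11)$, among others) admits no \emph{rational} $G$-Mori fiber space at all. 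That requires classifying the possible $G$-Fano and $G$-fibered models for each such group and ruling them out one by one by equivariant rigidity or non-rationality; this case analysis is the bulk of \cite{Cr3} and is not supplied here. As written, your proposal is a correct plan of attack with the hardest part acknowledged but not executed, so it should be regarded as an outline rather than a proof.
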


The theorem has been proven using the same idea: transfer the problem to geometry and work with good models of the action. This is of course a lot harder to do in dimension $3$ than in dimension $2$. Already the rationality question for $3$-fold Mori fiber spaces has several unresolved aspects.

There is little hope to classify finite subgroups of $\ON{Cr}_3$, but we should be able to classify finite simple subgroups up to conjugacy. Good models for actions of $\mathcal{A}_7$, $\ON{PSL}_2(8)$, or $\ON{PSU}_4(2)$ have already been classified, their models are $G$-Fano varieties since these groups cannot be embedded into $\ON{Cr}_2$. The Fano varieties with group actions is the most studied case so far: \cite{CC12}, \cite{CC13}, \cite{GFanoI}, \cite{GFanoII}, and \cite{GFano}. For $G$-conic bundles all we know is that the process of standardizing a conic bundle can be carried over equivariantly \cite{Avilov}. In this paper we focus on $G$-del Pezzo fibrations. The groups which could act on del Pezzo fibrations must be embedded into $\ON{Cr}_2$, these are: $\mathcal{A}_6$, $\ON{PSL}_2(7)$, and $\mathcal{A}_5$. The $\mathcal{A}_6$-del Pezzo fibrations have  already been classified.

\begin{Th}[{\cite[Appendix~B.]{CC13}}]
Let $X$ be an $\mathcal{A}_6$-del Pezzo fibration over $\MP^1$, then $X\cong\MP^2\times\MP^1$.
\end{Th}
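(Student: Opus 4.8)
The plan is to first constrain the $\mathcal{A}_6$-action relative to the fibration, then identify the general fiber, and finally rigidify the global bundle structure using representation theory. First I would study the induced action of $\mathcal{A}_6$ on the base $\MP^1$. The homomorphism $\mathcal{A}_6\to \ON{Aut}(\MP^1)=\ON{PGL}_2(\MC)$ is either trivial or injective because $\mathcal{A}_6$ is simple; since the finite subgroups of $\ON{PGL}_2(\MC)$ are cyclic, dihedral, $\mathcal{A}_4$, $\mathcal{S}_4$ and $\mathcal{A}_5$, the group $\mathcal{A}_6$ does not embed and the action on the base is trivial. Hence $\mathcal{A}_6$ preserves every fiber of $\pi$ and, in particular, acts faithfully on a general fiber $F$, a smooth del Pezzo surface. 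At this point I would invoke the classification of finite groups acting on del Pezzo surfaces (Dolgachev--Iskovskikh): one verifies that $\mathcal{A}_6$ acts faithfully on a del Pezzo surface $F$ only when $F\cong\MP^2$, through the Valentiner embedding $\mathcal{A}_6\subset\ON{PGL}_3(\MC)$, which lifts to the triple cover $3.\mathcal{A}_6\subset\ON{SL}_3(\MC)$. For degree $8$ this is immediate, since a simple group mapping to $\ON{PGL}_2(\MC)\times\ON{PGL}_2(\MC)$ must be trivial; for $5\leqslant d\leqslant 7$ the automorphism group is too small to contain a group of order $360$ (e.g. $\ON{Aut}(F)=\mathcal{S}_5$ in degree $5$); and for $d\leqslant 4$ the group $\mathcal{A}_6$ does not occur on the classification list.

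Next I would upgrade this pointwise statement to the global structure. Since the general fiber is $\MP^2$ and $\pi$ is a Mori fiber space of fiber degree $9$, every fiber is $\MP^2$ and $X$ is a $\MP^2$-bundle $X=\MP_{\MP^1}(E)$ with $E=\bigoplus_{i=1}^3\mathcal{O}_{\MP^1}(a_i)$. The $\mathcal{A}_6$-action now covers the identity on $\MP^1$ and acts fiberwise by the Valentiner representation. Passing to the triple cover $\widetilde G=3.\mathcal{A}_6$, and twisting $E$ by a line bundle (which does not change $\MP(E)$), I would linearize the action, making $E$ a $\widetilde G$-equivariant vector bundle on $\MP^1$ with $\widetilde G$ acting trivially on the base. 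Each fiber $E_t$ is then the $3$-dimensional irreducible Valentiner representation $W$, so $E$ is $W$-isotypic: by the equivariant Schur lemma $E\cong W\otimes_{\MC}\mathcal{H}om_{\widetilde G}(W,E)$, where $\mathcal{H}om_{\widetilde G}(W,E)$ is a line bundle $\mathcal{O}_{\MP^1}(a)$. Therefore $E\cong \mathcal{O}_{\MP^1}(a)^{\oplus 3}$ as a plain bundle and $X\cong\MP(\mathcal{O}^{\oplus 3})\cong\MP^2\times\MP^1$.

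The routine parts are the reduction to a fiberwise action and the decomposition of the bundle; the main obstacle is the rigidification in the last step, namely showing that the \emph{a priori} unbalanced bundle $E$ must in fact be balanced. This is exactly where the irreducibility of the Valentiner representation is essential: it forces all three summands of $E$ to have the same degree and thereby collapses the Hirzebruch-type parameters $a_i$. The subtleties to handle with care are the lift of the projective action to the central extension $3.\mathcal{A}_6$ (governed by the Schur multiplier $\MZ/6$ of $\mathcal{A}_6$) and the verification that no singular fibers can occur, so that $X$ is genuinely a $\MP^2$-bundle and not a degenerate degree-$9$ fibration.
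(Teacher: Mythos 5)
Your first two steps (triviality of the action on the base, identification of the general fiber with $\MP^2$ via the Valentiner action) agree with the paper, which quotes Belousov's classification (Theorem 3.2) for the second point. The final Schur-lemma argument that an $\mathcal{A}_6$-equivariant $\MP^2$-bundle over $\MP^1$ must be $\MP(\mathcal{O}(a)^{\oplus 3})\cong\MP^2\times\MP^1$ is correct as far as it goes. The problem is the sentence ``Since the general fiber is $\MP^2$ and $\pi$ is a Mori fiber space of fiber degree $9$, every fiber is $\MP^2$ and $X$ is a $\MP^2$-bundle.'' This does not follow. In the setting of the paper a $G$-del Pezzo fibration is only required to be terminal and $G\MQ$-factorial with $\rho^G(X/\MP^1)=1$; Mori fiber spaces whose generic fiber is $\MP^2$ can perfectly well have degenerate fibers and quotient singularities on the total space (this failure of smoothness of the relative minimal model is the whole theme of the paper). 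You flag this yourself at the end as a ``subtlety to handle with care,'' but it is not a subtlety --- it is the entire content of the theorem, and no argument is offered for it.

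The paper closes exactly this gap by a birational, not biregular, argument: Lemma 3.3 produces an $\mathcal{A}_6$-equivariant \emph{fiberwise birational map} $X\dasharrow\MP^2\times\MP^1$ from the uniqueness of the $\mathcal{A}_6$-action on the generic fiber, and Proposition 3.13 (via the Noether--Fano-type criterion of Theorem 3.6) shows this map is an isomorphism because every $\mathcal{A}_6$-invariant mobile linear system $\mathcal{M}\subset|-nK+lF|$ on $\MP^2\times\MP^1$ has canonical singularities. That canonicity is where the group genuinely enters, through the orbit bound $|\Sigma|\geqslant 10$ of Corollary 3.8 and the degree bound on invariant curves of Lemma 3.9, feeding into the log-canonicity statement of Lemma 3.10. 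If you want to keep your bundle-theoretic endgame, you would still need an argument of this type (or an equivariant resolution/MMP argument) to rule out non-product models with singular fibers before you may write $X=\MP_{\MP^1}(E)$.
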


For $\ON{PSL}_2(7)$ there is a series of families of del Pezzo fibrations and most of them have moduli, which makes the study and results more exciting compared to the case of  $\mathcal{A}_6$. 

\begin{Example}
Let $Y_n=\Big(\MC^6\setminus Z(I)\Big)/(\MC^*)^2$, where $I=\langle x,y,z,w \rangle\cap\langle u,v\rangle$ and the action of $(\MC^*)^2$ is given by the matrix:
\begin{align*}
\left(\begin{array}{ccccccc}
u&v&x&y&z&w\\
0&0&1&1&1&2\\
1&1&0&0&0&-n\end{array}\right)
\end{align*}
The variety $Y_n$ is a $4$-dimensional toric variety with $\ON{Cox}(Y_n)=\MC[x,y,z,w,u,v]$, the grading of this ring is defined by the matrix $A$. Clearly, $Y_0$ is a direct product $\MP^1\times\MP(1,1,1,2)$.
Let $X_n$ be the hypersurface of bidegree $(4,0)$ in $Y_n$ given by the equation
\begin{align*}
a_{2n}(u,v)w^2=x^3y+y^3z+z^3x,
\end{align*}
where $a_{2n}$ is a homogeneous polynomial of degree $2n$ without multiple roots. The projection
\begin{align*}
(u:v:x:y:z:w)\mapsto(u:v)
\end{align*} 
defines a structure of a del Pezzo fibration of degree $2$ on $X_n$. 

It is easy to see that $X_n$ admits the action of the group $\ON{PSL}_2(7)$ because right-hand side of the equation of $X_n$ is a polynomial defining the Klein quartic. Variety $X_n$ has $2n$ points of the type $\frac{1}{2}(1,1,1)$ and has no other singularities. Thus $X_n$ is terminal and $\MQ$-factorial.
\end{Example}

We develop a new technique to prove the following theorem and to reprove Theorem 1.7.

\begin{Th}[{\cite[Conjecture~3.3]{Hamid}}]
Let $X$ be a $\ON{PSL_2(7)}$-del Pezzo fibration over $\MP^1$. Let $S_2$ be the double cover of $\MP^2$ branched over the Klein quartic. Then
\begin{itemize}
	\item the generic fiber is $\MP^2$ and $X\cong\MP^2\times\MP^1$,
	\item or the generic fiber is $S_2$ and $X\cong X_n$.
\end{itemize}
\end{Th}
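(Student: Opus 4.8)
The plan is to separate the action on the base from the fibrewise geometry, classify the possible fibres representation-theoretically, and then rigidify the total space by decomposing $G$-equivariant vector bundles on $\MP^1$. Throughout write $G=\ON{PSL}_2(7)$, and recall that $G$ is simple and its smallest faithful representation is $3$-dimensional (its irreducible representations have dimensions $1,3,3,6,7,8$).

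First I would pin down the action on the base. The fibration $\pi$ is $G$-equivariant, so $G$ acts on $\MP^1$ through $\ON{Aut}(\MP^1)=\ON{PGL}_2(\MC)$; since $G$ is simple this homomorphism is injective or trivial, and the finite subgroups of $\ON{PGL}_2(\MC)$ are cyclic, dihedral, $\mathcal A_4$, $\mathcal S_4$, $\mathcal A_5$, none of order $168$. Hence $G$ acts trivially on $\MP^1$, i.e. fibrewise. A standard argument (for $g\ne e$ the locus of $t$ with $X_t\subseteq X^g$ is proper closed, and $G$ is finite) shows $G$ acts faithfully on the general fibre $S=X_t$, a smooth del Pezzo surface of some degree $d$ with faithful $G$-action. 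Next I would classify $S$. For $d\le 6$ the group $G$ maps to the Weyl group $W(E_{9-d})$ (with $E_5=D_5$, $E_4=A_4$, $E_3=A_2+A_1$); the image is trivial or injective, but an injection needs $7\mid|W(E_{9-d})|$, which fails for $W(E_6),W(D_5),W(A_4),W(A_2+A_1)$, while a trivial image forces $G$ into the connected (toric or trivial) part of $\ON{Aut}(S)$, impossible for a nonabelian simple group. This excludes $d\in\{3,4,5,6\}$. The degree-$7,8$ surfaces are excluded since $G$ would have to fix a point of $\MP^2$ (none exists for the irreducible action) or embed into $\ON{PGL}_2(\MC)\times\ON{PGL}_2(\MC)$. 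For $d=1$ one has $h^0(-K_S)=2$, so the anticanonical pencil gives $G\to\ON{PGL}_2(\MC)$, forcing $G$ to fix the base point and thus embed into $\ON{GL}_2(\MC)$ on the tangent space, which is again impossible. This leaves $d=9$, where $S=\MP^2$ with $G$ acting through a $3$-dimensional irreducible representation $V$, and $d=2$, where the anticanonical map realises $S$ as the double cover of $\MP(V)$ branched over the unique $G$-invariant quartic (since $\dim(\ON{Sym}^4 V^*)^G=1$), namely the Klein quartic; thus $S\cong S_2$.

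In the case $d=9$ I would present $X$ as $\MP(\mathcal E)$ for a rank-$3$ bundle $\mathcal E=\pi_*\mathcal O_X(H)$ on $\MP^1$, using $\ON{Br}(\MP^1)=0$. Since $G$ acts trivially on the base, $\mathcal E$ is a $G$-equivariant bundle whose fibre representation is the irreducible $V$ at every point (it is constant over the connected base). Decomposing $\mathcal E$ into $G$-isotypic components and applying Schur's lemma forces $\mathcal E\cong V\otimes L$ for a line bundle $L$. Projectivising gives $X\cong\MP(V)\times\MP^1\cong\MP^2\times\MP^1$, as claimed.

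In the case $d=2$ I would run the relative anticanonical construction. The sheaf $\pi_*\mathcal O_X(-K_X)$ is a $G$-equivariant rank-$3$ bundle with fibre $V$, hence $\cong V\otimes L_1$, while the rank-$7$ sheaf $\pi_*\mathcal O_X(-2K_X)$ contains $\ON{Sym}^2(V\otimes L_1)$ of rank $6$ together with one extra $G$-invariant line bundle $L_2$ spanned by the weight-$2$ generator $w$, on which $G$ acts trivially because $G$ is perfect. Thus $X$ embeds $G$-equivariantly into the $\MP(1,1,1,2)$-bundle $\MP\big((V\otimes L_1)\oplus L_2\big)$ over $\MP^1$ as a hypersurface $w^2=F$, and $G$-invariance with $\dim(\ON{Sym}^4 V^*)^G=1$ forces $F=a(u,v)\cdot f_{\mathrm{Klein}}(x,y,z)$ for a form $a$ on $\MP^1$. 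Comparing weights and absorbing the twists $L_1,L_2$ into the toric grading of $Y_n$ identifies $X$ with $\{a_{2n}(u,v)w^2=f_{\mathrm{Klein}}(x,y,z)\}$, that is $X\cong X_n$. I expect the main obstacle to be exactly this last bookkeeping in the $d=2$ case: determining the precise line-bundle twists $L_1,L_2$, checking that the resulting singular fibres are exactly the $\tfrac12(1,1,1)$-cones needed for $X$ to be terminal and $\MQ$-factorial with $\rho^G(X/\MP^1)=1$, and verifying that $a$ has no multiple roots so that $X$ lands in the family $X_n$ and not in a more degenerate, non-terminal member.
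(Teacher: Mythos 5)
Your classification of the generic fibre is fine (the paper simply quotes Belousov's theorem, Theorem 3.2, whereas you reprove it via Weyl groups and fixed points), and the reduction to a fibrewise problem via simplicity of $\ON{PSL}_2(7)$ matches the paper. The gap is in the second half. Writing $X=\MP(\mathcal E)$ with $\mathcal E=\pi_*\mathcal O_X(H)$ in the degree-$9$ case, or embedding $X$ into $\MP\big((V\otimes L_1)\oplus L_2\big)$ via $\pi_*\mathcal O_X(-K_X)$ and $\pi_*\mathcal O_X(-2K_X)$ in the degree-$2$ case, silently assumes that every fibre of $\pi$ is a well-behaved del Pezzo surface, so that these direct images are locally free of the expected rank and the relative anticanonical algebra is generated as it is over the generic point. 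A $G$-Mori fibre space is only required to be terminal and $G\MQ$-factorial with $\rho^G(X/\MP^1)=1$; its special fibres can degenerate badly, $h^0$ can jump, and then your construction only recovers a model that is fibrewise $G$-birational to $X$, not $X$ itself. In other words, your argument shows that $X$ is $G$-birational over $\MP^1$ to $\MP^2\times\MP^1$ or to some $X_n$ --- which is the easy half (Lemmas 3.3 and 3.5 of the paper, obtained by manipulating the equation of the generic fibre over $\MC(t)$) --- but not that this birational map is an isomorphism.

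That last step is the real content of the theorem and occupies most of Section 3: Proposition 3.13 shows that $\MP^2\times\MP^1$ and the $X_n$ admit no nontrivial fibrewise $G$-birational maps to $G$-del Pezzo fibrations. Its proof is a Noether--Fano type argument (Theorem 3.6, uniqueness of canonical models): if the map were not an isomorphism, there would be a $G$-invariant mobile linear system $\mathcal M\subset\big|-nK_X+lF\big|$ with $\big(X,\frac{1}{n}\mathcal M\big)$ non-canonical along some centre in a fibre, and this is excluded by inversion of adjunction together with orbit-size estimates for $\ON{PSL}_2(7)$ (Corollary 3.8), degree bounds for invariant curves (Lemma 3.9), the log-canonicity statements for invariant systems on $\MP^2$, on $S_2$ and on the quartic cone (Lemmas 3.10--3.12), and Lemma 2.12 to handle the half-points. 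None of this appears in your proposal, so as written it does not close. Your final sentence does flag the ``bookkeeping'' in the degree-$2$ case as the main obstacle, but the obstacle is not bookkeeping: it is a fibrewise birational rigidity statement that needs its own proof.
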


We also want to know which $X_n$ contribute to the Cremona group, that is which ones are rational. It is easy to see that $X_0$ and $X_1$ are rational.

\begin{Conj}[{\cite[Conjecture~3.5]{Hamid}}]
Varieties $X_n$ are not rational for $n\geqslant 2$.
\end{Conj}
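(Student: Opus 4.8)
The plan is to deduce the non-rationality of $X_n$ from Theorem 1.1 by checking that, for $n$ large enough, $X_n$ satisfies all of its hypotheses; birational rigidity then yields non-rationality. Three things must be verified: that $\Cl(X_n)=\MZ\oplus\MZ$, that the singular fibres are quartic cones in $\MP(1_x,1_y,1_z,2_w)$, and that the $K^2$-condition holds. The first two are quick. Since $Y_n$ is a simplicial toric variety with $\Cl(Y_n)=\MZ^2$ and $X_n$ is a quasismooth hypersurface, a Grothendieck--Lefschetz argument for generic members gives $\Cl(X_n)\cong\Cl(Y_n)=\MZ\oplus\MZ$, matching the stated $\MQ$-factoriality. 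For the fibres: over a root $[u_0:v_0]$ of $a_{2n}$ the equation degenerates to $q_4(x,y,z)=x^3y+y^3z+z^3x=0$ in $\MP(1_x,1_y,1_z,2_w)$, which is exactly the cone over the Klein quartic with vertex the $\frac12(1,1,1)$-point $[0:0:0:1]$; this is the specialty hypothesis of Theorem 1.1, and it accounts for the $2n$ singular points.

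The heart of the matter is the $K^2$-condition, which I would settle by explicit intersection theory on the toric ambient $Y_n$. Writing $\xi$ and $\phi$ for the classes of bidegree $(1,0)$ and $(0,1)$, the Stanley--Reisner relations coming from the irrelevant ideal $\langle x,y,z,w\rangle\cap\langle u,v\rangle$ give $\phi^2=0$ and $D_xD_yD_zD_w=0$, whence $\xi^3\phi=\frac12$ (the degree of the fibre $\MP(1,1,1,2)$) and $2\xi^4=n\,\xi^3\phi$, i.e. $\xi^4=\frac n4$. Adjunction gives $-K_{X_n}=(\xi+(2-n)\phi)|_{X_n}$, of bidegree $(1,2-n)$, paralleling Example $1.8$. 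Restricting to $X_n$ (of class $4\xi$) one computes the $1$-cycle $K_{X_n}^2=(-K_{X_n})^2$ through its pairings with the nef generators:
\begin{align*}
K_{X_n}^2\cdot\phi = 4(\xi+(2-n)\phi)^2\xi\phi = 2, \qquad K_{X_n}^2\cdot\xi = 4(\xi+(2-n)\phi)^2\xi^2 = 8-3n.
\end{align*}
The Mori cone $\overline{NE}(X_n)$ is spanned by the ruling $\ell$ of a cone-fibre (with $\xi\cdot\ell=\frac12$, $\phi\cdot\ell=0$) and the section $C=\{x=y=w=0\}\subset X_n$ (with $\xi\cdot C=0$, $\phi\cdot C=1$, noting $q_4$ vanishes identically on $x=y=0$), so $\ON{Nef}(X_n)=\langle\xi,\phi\rangle$. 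Hence $K_{X_n}^2$ lies in the interior of $\overline{NE}(X_n)$ precisely when $8-3n>0$; for $n\geqslant 3$ one has $K_{X_n}^2\cdot\xi<0$, so the $K^2$-condition holds and Theorem 1.1 applies, giving birational rigidity and therefore non-rationality.

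The main obstacle will be the borderline value $n=2$, where $K_{X_n}^2\cdot\xi=2>0$ and the $K^2$-condition narrowly fails, so Theorem 1.1 does not apply directly. This is consistent with $X_0$ and $X_1$ being rational (there $8-3n$ equals $8$ and $5$). Thus the strategy above proves the conjecture cleanly for $n\geqslant 3$, and the genuinely delicate case is $n=2$: one would have to decide whether $X_2$ is rigid by a finer argument that analyses all extremal contractions and maximal singularities directly, rather than invoking the $K^2$-criterion, or else exhibit a birational structure establishing the contrary. I expect this endpoint analysis, not the verification of the hypotheses for large $n$, to be the crux of a complete proof.
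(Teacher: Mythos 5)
Your argument is essentially the paper's own: Lemma 3.14 computes $K^2_{X_n}\equiv\frac{s}{2}+(8-3n)f$ against the two generators of $\overline{NE}(X_n)$ (your $(K^2_{X_n}\cdot\xi,\,K^2_{X_n}\cdot\phi)=(8-3n,\,2)$ is the same computation done upstairs in the toric ambient), concludes that the $K^2$-condition holds exactly for $n\geqslant 3$, and then invokes Theorem 1.1 to get Corollary 1.11. Be aware that the statement you were given is labelled a Conjecture precisely because the paper does not prove it in full either: it only establishes the $n\geqslant 3$ case, so your identification of $n=2$ (where $K^2_{X_n}$ lies strictly inside the Mori cone and the method breaks down) as the genuinely open endpoint is accurate and matches the paper.
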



Okada has proven that a very general $X_n$ is not rational for $n\geqslant 5$ using the reduction to characteristic $2$ \cite{Okada15}. Applying Theorem 1.1 to varieties $X_n$ we get a better result.

\begin{Cor}
The varieties $X_n$ are birationally rigid for $n\geqslant 3$, in particular they are not rational.
\end{Cor}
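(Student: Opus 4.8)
The plan is to verify that $X_n$ satisfies every hypothesis of Theorem 1.1 for $n\geqslant 3$, so that birational rigidity and non-rationality follow at once. Example 1.9 already records that $\pi\colon X_n\to\MP^1$ is a del Pezzo fibration of degree $2$ whose only singularities are the $2n$ points of type $\frac{1}{2}(1,1,1)$, and that $X_n$ is terminal and $\MQ$-factorial; this settles the singularity hypothesis. For the embedding hypothesis I would note that over each of the $2n$ roots of $a_{2n}$ the defining equation degenerates to $x^3y+y^3z+z^3x=0$ inside $\MP(1_x,1_y,1_z,2_w)$, which is exactly a quartic cone $q_4(x,y,z)=0$ with vertex at the singular point $(0:0:0:1)$, so the bad fibers have the required shape. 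That $\Pic(X_n)=\MZ\oplus\MZ$ I would deduce from the toric ambient $Y_n$, whose class group is $\MZ^2$ by construction, by a Grothendieck--Lefschetz-type statement for a sufficiently positive quasi-smooth hypersurface.

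The substantive point is the $K^2$-condition, and this is precisely what forces $n\geqslant 3$. Write $L$ and $F$ for the restrictions to $X_n$ of the divisor classes of bidegree $(1,0)$ and $(0,1)$; then $\Pic(X_n)=\MZ L\oplus\MZ F$, the fiber class is $F$, and since $-K_{Y_n}$ has bidegree $(5,2-n)$ and $X_n$ has class $4L$, adjunction gives
\begin{align*}
-K_{X_n}=L+(2-n)F.
\end{align*}
I would then read off the intersection numbers from the grading matrix. The relation $F^2=0$ comes from $\{u=v=0\}$ being irrelevant; the fibrewise computation in $\MP(1,1,1,2)$ gives $L^3F=\tfrac{1}{2}$; and the Stanley--Reisner relation $D_xD_yD_zD_w=0$, combined with $[D_w]=2L-nF$, yields $2L^4=nL^3F$, hence $L^4=\tfrac{n}{4}$ on $Y_n$. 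Restricting to $X_n$ by intersecting with $4L$ produces
\begin{align*}
L^3=n,\qquad L^2F=2,\qquad LF^2=F^3=0
\end{align*}
on the threefold, the value $L^2F=2$ being a reassuring check since it equals the degree $(-K_S)^2$ of a smooth fiber.

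It remains to pin down $\overline{NE}(X_n)$ and to locate $(-K_{X_n})^2$ inside it. I would show that $F$ and $L$ span the nef cone: $F$ is pulled back from an ample class on $\MP^1$, while $L$ is nef because the base locus of $|L|$ lies in the finite set $\{x=y=z=0\}\cap X_n$ (the $2n$ vertices), so no curve can have negative $L$-degree. Dually, $\overline{NE}(X_n)$ is spanned by the fiber line $\ell$, with $F\cdot\ell=0$ and $L\cdot\ell>0$, and by a section $\sigma_0$ lying in the surface $\{w=0\}\cap X_n=\MP^1\times C$, where $C$ is the Klein quartic; such a $\sigma_0$ has $L\cdot\sigma_0=0$ and $F\cdot\sigma_0=1$. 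Measuring a curve class by the pair $(L\cdot{-},\,F\cdot{-})$, the Mori cone becomes the first quadrant, whereas
\begin{align*}
L\cdot(-K_{X_n})^2=8-3n,\qquad F\cdot(-K_{X_n})^2=2.
\end{align*}
Hence $(-K_{X_n})^2$ lies in the interior of $\overline{NE}(X_n)$ exactly when $8-3n>0$, that is for $n\leqslant 2$; for $n\geqslant 3$ its $L$-degree is negative, so it falls strictly outside the Mori cone and the $K^2$-condition holds. Theorem 1.1 then applies and $X_n$ is birationally rigid, hence non-rational, for $n\geqslant 3$.

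The step I expect to be the main obstacle is the determination of the Mori cone---in particular proving that $\ell$ and $\sigma_0$ are genuinely extremal and that no curve of negative $L$-degree appears, so that the nef cone really is $\langle L,F\rangle$---together with the careful bookkeeping of the fractional intersection numbers (such as $L^3F=\tfrac{1}{2}$) forced by the $\frac{1}{2}(1,1,1)$-points, and the Lefschetz argument needed to secure $\Pic(X_n)=\MZ\oplus\MZ$ in the presence of these singularities.
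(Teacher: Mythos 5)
Your proposal is correct and follows essentially the same route as the paper: the paper's Lemma 3.15 establishes the $K^2$-condition for $X_n$ exactly when $n\geqslant 3$ by expanding $K_{X_n}^2\equiv\frac{s}{2}+(8-3n)f$ in the curve basis $\{s,f\}$ of the Mori cone, and then invokes Theorem 1.1 together with the observations of Example 1.8 about the singularities and the quartic-cone fibers, which is the same verification you carry out. Your computation is the dual bookkeeping of the same numbers (you pair $(-K_{X_n})^2$ against the nef classes $L$ and $F$, obtaining the same threshold $8-3n$), and note that the step you flag as the main obstacle is lighter than you fear: for the implication actually needed, nefness of $L$ (which you get from the finite base locus) already forces $K_{X_n}^2\notin\overline{NE}(X_n)$ when $8-3n<0$, without determining the extremal rays precisely.
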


The structure of the paper is the following. In section two we recall background knowledge on singularities and multiplicities of cycles. In section $3$ we prove the results related to the Cremona group: Theorem 1.7, Theorem 1.9, and Corollary 1.11. In section $4$ we recall the proof of rigidity of smooth del Pezzo fibrations of degree $2$. Sections $5$-$7$ are devoted to the proof of Theorem 1.1.

\subsection*{Acknowledgments}
The author would like to thank Ivan Cheltsov and Hamid Ahmadinezhad for many helpful conversations and suggestions, and Milena Hering for useful advice. Author started working on this problem during his visit to IBS and would like to thank Jihun Park for hospitality. The author was supported by the Edinburgh PCDS scholarship.

%
%

\section{Preliminaries}
We write $\equiv$ for numerical equivalence of $\MQ$-divisors and cycles and $\sim$ for linear equivalence of $\MQ$-divisors. We denote the symmetric group by $\mathcal{S}_n$ and its subgroup of even permutations by $\mathcal{A}_n$. All varieties are algebraic, normal and defined over $\MC$ unless stated otherwise. 

Let $X$ be an algebraic variety, possibly non-projective and singular. Let $E$ be a prime divisor on $X$. Then there is a discrete valuation $\nu_E$ of $\MC(X)$ corresponding to $E$ defined as $\nu_E(f)=\mult_E (f)$.

\begin{Def}[\cite{Valuations}]
Let $\varphi:\widetilde{X}\to X$ be a projective birational morphism. We say that a triple $(\widetilde{X},\varphi,E)$ is a \emph{realization} of a discrete valuation $\nu$ if $E$ is a prime divisor on $\widetilde{X}$ and $\nu_E=\nu$. We say that $\varphi(E)$ is the \emph{center} of a valuation $\nu_E$ on $X$.
\end{Def}

Note that if $X$ is projective, then every discrete valuation of the field $\MC(X)$ has a center on $X$ which does not depend on a realization.

\begin{Def}
Let $D$ be a divisor on $X$. We define the \emph{multiplicity} of a valuation $\nu$ at $D$ by the number 
\begin{align*}
\nu(D)=\mult_E \varphi^*(D)
\end{align*}
for some realization $(\widetilde{X},\varphi,E)$ of $\nu$. That is we can write
\begin{align*}
\varphi^*(D)=\varphi^{-1}(D)+\nu(D)E+\sum a_i E_i,
\end{align*}
where $E_i$ are the other exceptional divisors of $\varphi$. Multiplicity does not depend on the realization.
\end{Def}

\begin{Def}[{\cite[p.~6]{Pr_C}}]
Let $D$ be a $\mathds{Q}$-divisor on $X$ such that $K_X+D$ is $\mathds{Q}$-Cartier. Let $\pi:\widetilde{X}\to X$ be a birational morphism and let $\widetilde{D}=\pi^{-1}(D)$ be the proper transform of $D$. Then
\begin{align*}
K_{\widetilde{X}}+\widetilde{D}\sim\pi^*(K_X+D)+\sum_E a(E,X,D)E,
\end{align*}
where $E$ runs through all the distinct exceptional divisors of $\pi$ on $\widetilde{X}$ and $a(E,X,D)$ is a rational number. The number $a(E,X,D)$ \big(=$a(\nu_E,X,D)$\big) is called the \emph{discrepancy} of a divisor $E$ (valuation $\nu_E$) with respect to the pair $(X,D)$.
Let $\mathcal{M}$ be a linear system, not necessarily mobile, on $X$. Then we set $a(E,X,\mathcal{M})=a(E,X,D)$ for a generic divisor $D\in\mathcal{M}$.
\end{Def}

\begin{Def}[{\cite[p.~6]{Pr_C}}]
Let $\mathcal{M}$ be a linear system, not necessarily mobile, on $X$. We say that the pair $(X,\mathcal{M})$ is \emph{terminal} (resp. \emph{canonical}, \emph{log terminal}, \emph{log canonical}) \emph{at the valuation} $\nu$ with a center on $X$ if $a(E,X,\mathcal{M})>0$ (resp. $a(E,X,\mathcal{M})\geqslant0$, $a(E,X,\mathcal{M})>-1$, $a(E,X,\mathcal{M})\geqslant-1$) for some realization $(\widetilde{X},\varphi,E)$ of $\nu$. We say that the pair is $(X,\mathcal{M})$ terminal (resp. \dots) \emph{at a subvariety} $Z$ if it is \emph{terminal} (resp. \dots) at every valuation $\nu$ on $K(X)$ such that a center of $\nu$ on $X$ is $Z$. We say that the pair is $(X,\mathcal{M})$ \emph{terminal} (resp. \dots) if it is \emph{terminal} (resp. \dots) at every valuation with a center on $X$. If $\mathcal{M}=0$, we simply say that $X$ has only \emph{terminal} (resp. \emph{canonical}, \emph{log terminal}, \emph{log canonical}) singularities.
\end{Def}

\begin{Remark}
Consider the pair $(X,\mathcal{M})$. Let $f:Y\to X$ be a projective birational morphism, let $E_i$ be the exceptional divisors and let $\widetilde{\mathcal{M}}$ be the proper transform of $\mathcal{M}$ on $Y$. Then the pair
\begin{align*}
\big(Y,\widetilde{\mathcal{M}}-\sum a(E_i,X,\mathcal{M}) E_i\big)
\end{align*}
is called the \emph{log pullback} of the pair $(X,\mathcal{M})$.
It follows from the definition that the log pullback of the pair has the same singularities as the pair.
\end{Remark}

\begin{Lemma}[{\cite[Theorem~1.6]{Cheltsov14}}]
Let $\mathcal{M}$ be a mobile linear system on $\MC^2$. Let $C$ be a curve passing through the origin. Suppose the pair $(\MC^2,\frac{1}{n}\mathcal{M}-\alpha C)$ is not terminal at $0$ then
\begin{align*}
\mult_0 D_1\cdot D_2\geqslant 4n^2\alpha.
\end{align*}
\end{Lemma}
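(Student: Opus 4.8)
The plan is to deduce the stated bound from a more flexible statement proved by induction on the number of blow-ups needed to realise the offending valuation. Let $D_1,D_2$ be two general members of $\mathcal{M}$ and set $m=\mult_0\mathcal{M}$. I would first replace the curve $C$ by an arbitrary effective boundary $B$ supported on curves through the origin, and prove the sharper claim: if $(\MC^2,\frac{1}{n}\mathcal{M}-B)$ is not terminal at $0$, then $\mult_0(D_1\cdot D_2)\geqslant 4n^2\mult_0 B$. The Lemma then follows at once by taking $B=\alpha C$ and noting $\mult_0 C\geqslant 1$, so that $4n^2\mult_0 B=4n^2\alpha\,\mult_0 C\geqslant 4n^2\alpha$. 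Passing to a general boundary is essential: after one blow-up the exceptional curve acquires a nonzero coefficient and must be carried along as part of the boundary, so the single-curve formulation does not recurse while the general one does.

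Two elementary computations drive the induction. Let $\sigma\colon S\to\MC^2$ be the blow-up of the origin with exceptional curve $E$. Since $\sigma^*D_i=\widetilde{D}_i+mE$, $E^2=-1$ and $\widetilde{D}_i\cdot E=m$, the intersection number splits as $\mult_0(D_1\cdot D_2)=m^2+\sum_{p\in E}\mult_p(\widetilde{D}_1\cdot\widetilde{D}_2)$. On the discrepancy side, $a(E,\MC^2)=1$ and $\nu_E(B)=\mult_0 B$, so $a\bigl(E,\MC^2,\tfrac{1}{n}\mathcal{M}-B\bigr)=1-\tfrac{m}{n}+\mult_0 B$.

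Now I would induct on the length $N$ of the chain of point blow-ups that extracts the non-terminal valuation $\nu$. If $N=1$ the bad divisor is $E$ itself, so $1-\tfrac{m}{n}+\mult_0 B\leqslant 0$, giving $m\geqslant n(1+\mult_0 B)$; then $\mult_0(D_1\cdot D_2)\geqslant m^2\geqslant n^2(1+\mult_0 B)^2\geqslant 4n^2\mult_0 B$ by the inequality $(1+t)^2\geqslant 4t$. If $N>1$ the number $a_1:=1-\tfrac{m}{n}+\mult_0 B$ is positive (else $E$ would already be non-terminal), the centre of $\nu$ on $S$ is a point $p_1\in E$, and by the Remark the log pullback $\bigl(S,\tfrac1n\widetilde{\mathcal{M}}-\widetilde{B}-a_1E\bigr)$ is non-terminal at $p_1$ with boundary $B_1:=\widetilde{B}+a_1E$ effective. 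Since $E$ is smooth at $p_1$ we have $\mult_{p_1}B_1\geqslant a_1$, and $\nu$ is realised over $p_1$ in $N-1$ blow-ups, so the induction hypothesis yields $\mult_{p_1}(\widetilde{D}_1\cdot\widetilde{D}_2)\geqslant 4n^2\mult_{p_1}B_1\geqslant 4n^2a_1$. Combining with the splitting and completing the square gives $\mult_0(D_1\cdot D_2)\geqslant m^2+4n^2a_1=(m-2n)^2+4n^2\mult_0 B\geqslant 4n^2\mult_0 B$, closing the induction.

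The main point to get right is the bookkeeping of the inductive step rather than any hard estimate: one must check that enlarging the boundary by $a_1E$ keeps it effective (which is exactly the hypothesis $a_1>0$ of the non-base case) and that the target $4n^2\mult_0 B$ reproduces itself under the identity $m^2+4n^2a_1=(m-2n)^2+4n^2\mult_0 B$. The nonnegative remainders $(m-2n)^2$ and $4n^2\mult_{p_1}\widetilde{B}$ are simply discarded, so no sharp control of the proper transform $\widetilde{B}$ is needed; the only real inputs are the two computations above, the AM–GM inequality in the base case, and the fact that the log pullback preserves the singularities of the pair.
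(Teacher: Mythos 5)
Your argument is correct. Note that the paper does not prove this statement at all: it is imported verbatim as \cite[Theorem~1.6]{Cheltsov14}, where it appears as (a special case of) a more general local inequality, so any proof you give is necessarily a ``different route.'' What you propose is the classical Corti-style induction on the length $N$ of the chain of point blow-ups realising the non-terminal valuation, and the bookkeeping all checks out: the strengthening from $\alpha C$ to an arbitrary effective boundary $B$ with target $4n^2\mult_0 B$ is exactly the right inductive statement, since after one blow-up the exceptional curve enters the boundary with coefficient $a_1=1-\tfrac{m}{n}+\mult_0 B$ and the single-curve formulation would not recurse; the reduction to $a_1>0$ in the inductive step (otherwise $E$ itself witnesses non-terminality and the base case applies) is needed precisely so that $B_1=\widetilde{B}+a_1E$ stays effective, and you handle it; the two computations $\mult_0(D_1\cdot D_2)=m^2+\sum_{p\in E}\mult_p(\widetilde{D}_1\cdot\widetilde{D}_2)$ and $\mult_{p_1}B_1\geqslant a_1$ together with the identity $m^2+4n^2a_1=(m-2n)^2+4n^2\mult_0 B$ close the induction, and the base case is AM--GM. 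What this buys is a short, self-contained, purely two-dimensional proof of exactly the inequality the paper needs, at the cost of not recovering the sharper and more general form of Cheltsov's theorem; for the purposes of this paper (where the lemma is only ever applied in this form, e.g.\ in Case A of Section 7) your version suffices.
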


\begin{Prop}[{Corti inequality, \cite[Theorem~3.12]{Corti00}}]
Let $F_1$,\dots,$F_n \in \MC^3$ be irreducible surfaces passing through the origin. Let $\mathcal{M}$ be a mobile linear system on $\MC^3$ and let $Z=D_1\cdot D_2$ be the intersection of general divisors $D_1,D_2\in \mathcal{M}$. Write
$Z=Z_{h}+\sum Z_i$, where the support of $Z_i$ is contained in $F_i$ and $Z_h$ intersects $\sum F_i$ properly. 
Let $\alpha_i\geqslant0$ be rational numbers such that the pair $(\MC^3,\frac{1}{n}\mathcal{M}-\sum \alpha_i F_i)$ is not terminal at $0$. Then there are rational numbers $0 < t_i \leqslant 1$ such that
\begin{align*}
\mult_0 Z_h + \sum t_i \mult_0 Z_i\geqslant 4n^2(1+\sum \alpha_i t_i\mult_0 F_i).
\end{align*}
\end{Prop}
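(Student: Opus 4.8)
The plan is to reduce the three-dimensional statement to the two-dimensional inequality of Lemma 2.6 by restricting everything to a general surface through the origin, and then to transfer the resulting estimate back to $\MC^3$. Write $\Delta=\frac1n\mathcal{M}-\sum\alpha_i F_i$, so that the hypothesis says $(\MC^3,\Delta)$ is not terminal at $0$; concretely there is a divisor $E$ over $0$ with
\begin{align*}
a(E,\MC^3,0)-\tfrac1n\nu_E(\mathcal{M})+\sum_i\alpha_i\nu_E(F_i)\leqslant 0.
\end{align*}
Let $S$ be a general surface through $0$ (a general member of the system of hyperplanes through the point). Set $\mathcal{N}=\mathcal{M}|_S$, which is again mobile, and $C_i=F_i\cap S$, curves through $0$ with $\mult_0 C_i=\mult_0 F_i$ for general $S$. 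First I would show, by inversion of adjunction, that the restricted pair $\big(S,\frac1n\mathcal{N}-\sum\alpha_i C_i\big)$ fails to be terminal at $0$: passing to a general surface through $0$ lowers the discrepancy of the relevant valuation by at least one, so non-terminality of $(\MC^3,\Delta)$ forces the restricted pair to be non-log-terminal, in particular non-terminal, at $0$.

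Next I would match the cycles. For general $D_1,D_2\in\mathcal{M}$ the restrictions $D_1|_S,D_2|_S$ are general members of $\mathcal{N}$, and by associativity of intersection the zero-cycle $(D_1|_S)\cdot(D_2|_S)$ equals $Z\cdot S$ with $Z=D_1\cdot D_2$; since $S$ is general this gives
\begin{align*}
\mult_0\big((D_1|_S)\cdot(D_2|_S)\big)=\mult_0 Z,\qquad \mult_0(Z_i\cdot S)=\mult_0 Z_i,\qquad \mult_0(Z_h\cdot S)=\mult_0 Z_h,
\end{align*}
the decomposition $Z=Z_h+\sum Z_i$ restricting to the splitting of the surface cycle according to the curves $C_i$ (note $Z_i\cdot S$ is supported on $F_i\cap S=C_i$). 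Applying the two-dimensional inequality of Lemma 2.6 — in the refinement allowing several boundary curves $C_i$ — to the non-terminal surface pair produces rational numbers $0<t_i\leqslant 1$ and the estimate
\begin{align*}
\mult_0 Z_h+\sum_i t_i\,\mult_0 Z_i\geqslant 4n^2\Big(1+\sum_i\alpha_i t_i\,\mult_0 C_i\Big).
\end{align*}
Substituting $\mult_0 C_i=\mult_0 F_i$ and the multiplicity identities above yields exactly the claimed inequality. The weights $t_i$ are produced intrinsically by the log resolution of the surface pair: they record, along the chain of blow-ups extracting the bad valuation, the proportion in which each curve $C_i$ enters the exceptional locus, and they are automatically in $(0,1]$.

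The main obstacle is the two-dimensional input together with the adjunction step, where all the real content sits. On the adjunction side one must check that a \emph{general} surface through $0$ still detects the non-terminal valuation with the correct comparison of discrepancies; this is delicate when the center of $E$ on $\MC^3$ is a curve through $0$ rather than the point itself, or when some $F_i$ is tangent to $S$, and one handles it by choosing $S$ transverse to all relevant strata and invoking the precise inversion of adjunction. On the two-dimensional side the heart is the multi-curve refinement of Lemma 2.6: one resolves $\big(S,\frac1n\mathcal{N}-\sum\alpha_i C_i\big)$, writes the discrepancy inequality at the extracted valuation, and runs an intersection-theoretic estimate on the surface relating $\nu(\mathcal{N})$, the $\nu(C_i)$ and the local intersection numbers $\mult_0 Z_h$, $\mult_0 Z_i$; balancing these contributions is exactly what forces the factor $4n^2$ and fixes the weights $t_i$. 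The remaining steps, namely transferring multiplicities under a general section and performing the final substitution, are routine.
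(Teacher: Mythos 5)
The paper never proves this Proposition: it is quoted from \cite{Corti00} (Theorem~3.12) and used as a black box, so there is no internal proof to measure your argument against. On its own terms, your opening moves are the standard ones and are fine: for a general surface $S\ni 0$ one has $\nu_E(S)\geqslant 1$ for every valuation $E$ with center $0$, so non-terminality of $(\MC^3,\frac{1}{n}\mathcal{M}-\sum\alpha_iF_i)$ at $0$ does force the restricted pair to be non-log-terminal at $0$ by inversion of adjunction, and the identities $\mult_0(Z\cdot S)=\mult_0 Z$ and $\mult_0(F_i\cap S)=\mult_0 F_i$ hold for general $S$.

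The genuine gap is the step you outsource to a ``multi-curve refinement of Lemma~2.6'': no surface statement of that kind can deliver the conclusion, because cutting with $S$ destroys exactly the data the weighted inequality is about. After restriction, $Z_h\cdot S$ and the $Z_i\cdot S$ are all zero-cycles concentrated at the origin, and the general members $D_1|_S,D_2|_S$ contain no component of any $C_i=F_i\cap S$ since $\mathcal{M}$ is mobile; hence the surface pair $(S,\frac{1}{n}\mathcal{M}|_S-\sum\alpha_iC_i)$ carries no intrinsic information distinguishing which part of the local intersection number at $0$ ``came from'' $F_i$. A lemma quantified over an arbitrary decomposition of a zero-cycle, with the $t_i$ chosen afterwards, is either false or degenerates into an unweighted bound on the total $\mult_0\big((D_1|_S)\cdot(D_2|_S)\big)$ --- and the whole point of the $t_i$ in the statement is precisely that the contributions $\mult_0 Z_i$ (which in the applications of Sections~4 and~7 are the large, vertical ones) may be \emph{discounted} on the left-hand side, something no estimate on the total local intersection number can provide. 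In Corti's proof the $t_i$ are extracted from the three-dimensional resolution of the non-terminal valuation --- they record how the exceptional divisors of the chain of blow-ups sit over the various $F_i$ --- and the inequality results from the combinatorial computation with the multiplicities of $\mathcal{M}$ along the chain of centers, the two-dimensional inequality entering only as one ingredient. That computation is the entire content of the theorem, and it is missing from your proposal.
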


Note that decomposition $Z=Z_h+\sum Z_i$ may not be unique, but the inequality holds for any choice of the decomposition. Also note that we do not care if $\sum F_i$ is a normal crossing divisor, or if surfaces $F_i$ are smooth or not.

\begin{Lemma}[{\cite[Lemma~2.2.14]{IPFano}}]
Let $g:Y\to X$ be the blow up of a smooth curve $C\subset X$ on a smooth threefold $X$. Let $E$ be the exceptional divisor of $g$, then $E$ is a projectivization of the normal bundle $N_{C/X}$. Let $f\in A^2(Y)$ be the class of the fiber of the ruled surface $E$, then the following equalities hold
\begin{enumerate}[(i)]
	\item $E^2=-g^*(C)+\deg(N_{C/X})f$,
	\item $E^3=-\deg(N_{C/X})$,
	\item $E\cdot f=-1$,
	\item $E\cdot g^*(D)=(C\cdot D)f$,
	\item $f\cdot g^*(D)=0$,
	\item $E\cdot g^*(Z)=f\cdot g^*(Z)=0$,
	\item $\deg(N_{C/X})=2g(C)-2-K_X\cdot C$.
\end{enumerate}
\end{Lemma}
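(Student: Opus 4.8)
\emph{Proof plan.} The whole statement is a bookkeeping computation in the Chow (equivalently numerical) ring of the smooth threefold $Y$, and every item reduces to three ``atomic'' facts about the exceptional divisor. First I would fix the geometry of $E$: it is the $\MP^1$-bundle $p\colon E=\MP(N^\vee_{C/X})\to C$, and the conormal sequence identifies $\mathcal{O}_Y(E)|_E$ with the tautological bundle $\mathcal{O}_E(-1)$. Writing $\xi=c_1(\mathcal{O}_E(1))$ and letting $f$ be the fiber class, I would record $f^2=0$, $\xi\cdot f=1$, and---using the Grothendieck relation $\xi^2=p^*c_1(N^\vee)\cdot\xi$ together with $c_2=0$ on a curve---the identity $\int_E\xi^2=\deg N^\vee=-\deg N$. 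These numbers, together with $j^*E=-\xi$ for the inclusion $j\colon E\hookrightarrow Y$, are the only local input.

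Next the ``transversality'' items follow from the projection formula applied to $g$. Since $f$ is contracted to a point and $E$ is contracted onto the curve $C$, one has $g_*f=0$ and $g_*[E]=0$; feeding these into $g_*(\alpha\cdot g^*\beta)=g_*\alpha\cdot\beta$ yields (v) and (vi) at once. Item (iii) is $E\cdot f=(j^*E)\cdot f=-\xi\cdot f=-1$. For (iv) I would use $E\cdot\gamma=j_*j^*\gamma$ with $\gamma=g^*(D)$: because $g\circ j=\iota\circ p$ with $\iota\colon C\hookrightarrow X$, the restriction $j^*g^*(D)=p^*(D|_C)$ equals $(C\cdot D)f$, and pushing forward gives $E\cdot g^*(D)=(C\cdot D)f$. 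Item (ii) is the self-intersection formula $E^3=\int_E(j^*E)^2=\int_E\xi^2=-\deg N$.

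With these in hand (i) becomes a purely formal matter. Using the blow-up decomposition $A^2(Y)=g^*A^2(X)\oplus\MZ f$ I would write $E^2=g^*(W)+mf$ and solve for $W$ and $m$ by intersecting against divisor classes. Pairing with $g^*(D)$ and invoking (iv), (iii), (v) gives $W\cdot D=E^2\cdot g^*(D)=(C\cdot D)(E\cdot f)=-(C\cdot D)$ for every $D$, hence $W=-C$; pairing with $E$ and invoking (ii), (vi), (iii) gives $-m=E^3=-\deg N$, hence $m=\deg N$, which is exactly (i). Finally (vii) is independent of the blow-up: it is adjunction for the rank-two bundle $N=N_{C/X}$, read off from $0\to T_C\to T_X|_C\to N\to0$, giving $\deg N=c_1(T_X)\cdot C-(2-2g(C))=-K_X\cdot C+2g(C)-2$.

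The only genuinely delicate point is the \emph{normalization of conventions}: whether one writes $E=\MP(N)$ or $\MP(N^\vee)$ and whether $\mathcal{O}_Y(E)|_E$ is $\mathcal{O}(1)$ or $\mathcal{O}(-1)$. These choices flip the sign of $\xi^2$, and only the conormal/Proj convention above reproduces the stated $E^3=-\deg N$ rather than $+\deg N$; I would pin this down once at the start so that the signs in (i) and (ii) come out correctly, after which the remaining steps are routine.
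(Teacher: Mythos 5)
The paper does not prove this lemma at all: it is quoted verbatim from Iskovskikh--Prokhorov \cite[Lemma~2.2.14]{IPFano} and used as a black box. Your argument is correct and is essentially the standard proof of that reference: the reduction of everything to $\mathcal{O}_Y(E)\vert_E=\mathcal{O}_E(-1)$, the Grothendieck relation on the ruled surface, the projection formula, and the decomposition $A^2(Y)=g^*A^2(X)\oplus\MZ f$ is exactly how (i)--(vii) are obtained, and your closing remark about the $\MP(N)$ versus $\MP(N^\vee)$ normalization correctly identifies the only place where a sign error could creep in (though note that the values of $E^3$ and $\deg N_{C/X}$ are themselves convention-independent, so the convention only affects the intermediate bookkeeping, not the final equalities).
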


Let $Y$ be the quotient $\MA^n/\langle -I_n \rangle$. Let $P$ be the image of zero on $Y$, then we say that $P$ is a singular point of the type $\frac{1}{2}(1,\dots,1)$, or simply that $P$ is $\frac{1}{2}(1,\dots,1)$. If a singularity $Q$ is analytically isomorphic to $P$ we also say that $Q$ is $\frac{1}{2}(1,\dots,1)$. We call the singularities of the type $\frac{1}{2}(1,\dots,1)$ \emph{half-points}. Half-point is the simplest terminal quotient singularity.

\begin{Lemma}[{\cite[Lemma~4.10]{Pr_S}}]
Let $Q\in X$ be a $\frac{1}{2}(1,1,1)$-point. Suppose $f:Y\to X$ is the blow up of $Q$ and let $E$ be the exceptional divisor, then
\begin{enumerate}[(i)]
	\item $K_Y\sim f^*(K_X)+\frac{1}{2}E$,
	\item if the local equation of $D$ at $Q$ is $x_i=0$, then $f^{-1}(D)=f^*(D)-\frac{1}{2}E$,
	\item $\mathcal{O}_E(E)\vert_E=\mathcal{O}_E(-2)$.
\end{enumerate}
\end{Lemma}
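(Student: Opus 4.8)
The plan is to reduce all three statements to standard computations on the ordinary blow-up of the origin in $\MA^3$, pushed through the quotient presentation of the singularity. Working locally, write $X=\MA^3/G$ with $G=\langle -I_3\rangle$ acting by $(x,y,z)\mapsto(-x,-y,-z)$, and let $p\colon\MA^3\to X$ be the quotient. Let $\beta\colon W\to\MA^3$ be the blow-up of the origin, with exceptional divisor $\widetilde E\cong\MP^2$. The action of $G$ lifts to $W$, and in the chart $(t,u,v)\mapsto(t,tu,tv)$ it becomes $(t,u,v)\mapsto(-t,u,v)$, so $G$ fixes $\widetilde E=\{t=0\}$ pointwise and acts by $-1$ on the normal direction. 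Hence $Y:=W/G$ is smooth, the induced map $q\colon W\to Y$ is a double cover branched along $\widetilde E$ with $q^*E=2\widetilde E$ (where $E$ is the exceptional divisor of the induced morphism $f\colon Y\to X$), and $q|_{\widetilde E}\colon\widetilde E\to E$ is an isomorphism, so $E\cong\MP^2$. This $f$ is the blow-up in the statement, and I have the commutative square $p\circ\beta=f\circ q$ to transport computations.

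For (iii), I restrict $q^*\mathcal{O}_Y(E)=\mathcal{O}_W(2\widetilde E)$ to $\widetilde E$. Since $\mathcal{O}_{\widetilde E}(\widetilde E)|_{\widetilde E}=\mathcal{O}_{\MP^2}(-1)$ for the blow-up of a point on a smooth threefold, and $q|_{\widetilde E}$ is an isomorphism, this gives $\mathcal{O}_E(E)|_E=\mathcal{O}_{\MP^2}(-2)$ at once.

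For (i), I combine three canonical-class formulas. From $\beta$ I have $K_W=\beta^*K_{\MA^3}+2\widetilde E$; from $p$, which is étale in codimension one since its branch locus is the single point $Q$, I get $p^*K_X=K_{\MA^3}$; and from the double cover $q$, Riemann--Hurwitz gives $K_W=q^*K_Y+\widetilde E$ (concretely, $q^*(ds\wedge du\wedge dv)=2t\,dt\wedge du\wedge dv$ vanishes to order one along $\{t=0\}$ in the chart above, where $s=t^2$). Using $\beta^*K_{\MA^3}=\beta^*p^*K_X=q^*f^*K_X$ and $\widetilde E=\tfrac12 q^*E$, these combine to $q^*K_Y=q^*\big(f^*K_X+\tfrac12 E\big)$, and injectivity of $q^*$ on $\MQ$-divisor classes yields $K_Y\sim f^*K_X+\tfrac12 E$.

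For (ii), I first observe that the hyperplane $\{x_i=0\}$ is $G$-invariant, so $D:=p(\{x_i=0\})$ is a divisor with $p^*D=\{x_i=0\}$ and $\mult_0\{x_i=0\}=1$; thus $\beta^*p^*D=\widetilde{\{x_i=0\}}+\widetilde E$. Writing $f^*D=f^{-1}(D)+cE$ and pulling back by $q$, I use $q^*f^*D=\beta^*p^*D=\widetilde{\{x_i=0\}}+\widetilde E$ together with $q^*f^{-1}(D)=\widetilde{\{x_i=0\}}$ (since $q$ is unramified along that component) and $q^*E=2\widetilde E$ to read off $c=\tfrac12$, which is exactly $f^{-1}(D)=f^*(D)-\tfrac12 E$. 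The only real subtlety throughout is the bookkeeping of the ramification of $q$: one must simultaneously track that $q^*E=2\widetilde E$ and that the Hurwitz term contributes $+\widetilde E$, and justify that $p^*D$ and $q^*f^{-1}(D)$ pick up multiplicity one across their respective branch loci. A purely toric alternative would realize $f$ as the star subdivision of the cone of $X$ at the primitive lattice vector $\tfrac12(1,1,1)$ and deduce all three statements from the toric discrepancy and intersection formulas, but I find the covering argument more transparent.
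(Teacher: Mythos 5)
Your argument is correct. Note, however, that the paper does not prove this lemma at all: it is quoted verbatim from Prokhorov's lecture notes \cite{Pr_S} as a known fact, so there is no proof in the paper to compare against. Your covering construction --- presenting $Y$ as $W/G$ where $W$ is the ordinary blow-up of $0\in\MA^3$ and $G=\langle -I_3\rangle$, with $q\colon W\to Y$ ramified along $\widetilde E$ so that $q^*E=2\widetilde E$ while Riemann--Hurwitz contributes $+\widetilde E$ --- is the standard way to verify all three items, and your bookkeeping of these two different ``factors of $2$'' is exactly right; the only point you pass over quickly is the identification of $W/G\to X$ with \emph{the} blow-up of $Q$ (equivalently the Kawamata weighted blow-up), which for a $\frac12(1,1,1)$-point is standard since $X$ is locally the cone over the Veronese surface and $W/G$ is the total space of $\mathcal{O}_{\MP^2}(-2)$ resolving its vertex.
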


\begin{Prop}[\cite{Kawamata}]
Let $f:\widetilde{X}\to X$ be the blow up of a half-point $Q$ and let $E$ be the exceptional divisor of $f$. Then a pair $(X,D)$ is canonical at $Q$ if and only if it is canonical at $E$, that is $a(E,X,D)<0$.
\end{Prop}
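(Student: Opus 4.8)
The plan is to reduce both implications to the single blow-up $f\colon\widetilde X\to X$, which already resolves the singularity. The implication ``canonical at $Q$ $\Rightarrow a(E,X,D)\geqslant 0$'' is immediate: since $f(E)=Q$, the valuation $\nu_E$ has center $Q$, so canonicity at $Q$ forces $a(E,X,D)=a(\nu_E,X,D)\geqslant 0$. (I read the stated equivalence as: $(X,D)$ is canonical at $Q$ iff $a(E,X,D)\geqslant 0$, equivalently non-canonical precisely when $a(E,X,D)<0$.) For the converse assume $a(E,X,D)\geqslant 0$. By the half-point blow-up formulas (Lemma~2.9) $K_{\widetilde X}=f^*K_X+\tfrac12 E$, hence $a(E,X,D)=\tfrac12-\nu_E(D)$ and the hypothesis reads $\nu_E(D)\leqslant\tfrac12$. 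By the log-pullback principle (Remark~2.5) it then suffices to show that $(\widetilde X,\widetilde D-a(E,X,D)E)$ is canonical at every valuation whose center lies in $E$, where $\widetilde D=f^{-1}(D)$. A useful piece of geometry, using that $\widetilde X$ is smooth with $E\cong\MP^2$ and $\mathcal{O}_E(E)=\mathcal{O}_{\MP^2}(-2)$ (Lemma~2.9): restricting $\widetilde D=f^*D-\nu_E(D)E$ to $E$, and using that $(f^*D)|_E$ is numerically trivial, shows that $\widetilde D|_E$ is a plane curve of degree $2\nu_E(D)\leqslant 1$, i.e. at most a line.

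The clean, uniform way to control \emph{all} valuations centered at $Q$ is to pass to the quotient presentation $\phi\colon\MA^3\to X=\MA^3/\langle-I\rangle$, which is \'etale in codimension one with $K_{\MA^3}=\phi^*K_X$. Writing $D'=\phi^*D$, a direct comparison of the two blow-ups (of $0\in\MA^3$ and of $Q\in X$) gives $\mult_0 D'=2\nu_E(D)\leqslant 1$. Moreover valuations with center $Q$ on $X$ correspond to $\langle-I\rangle$-orbits of valuations $w$ with center $0$ on $\MA^3$, and the ramification formula for log discrepancies reads $a(w,\MA^3,D')+1=e(w)\bigl(a(\nu,X,D)+1\bigr)$ with ramification index $e(w)\in\{1,2\}$. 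Hence $a(\nu,X,D)\geqslant 0$ is equivalent to $a(w,\MA^3,D')\geqslant e(w)-1$, and since $e(w)\leqslant 2$ it is enough to prove the single inequality $a(w,\MA^3,D')\geqslant 1$ for every valuation $w$ centered at the smooth point $0$, under the hypothesis $\mult_0 D'\leqslant 1$.

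For a monomial valuation $w$ with positive integer weights $(a_1,a_2,a_3)$ this is a short computation: one has $a(w,\MA^3,0)=a_1+a_2+a_3-1$, and since the lowest-degree part of each local equation contains a monomial of weight at most $\max_i a_i$ times its degree, one gets $w(D')\leqslant(\max_i a_i)\,\mult_0 D'\leqslant\max_i a_i$. Therefore $a(w,\MA^3,D')\geqslant(a_1+a_2+a_3-1)-\max_i a_i$, which is the sum of the two smallest weights minus one, hence $\geqslant 1$. The main obstacle is exactly to promote this from monomial (toric) valuations to \emph{arbitrary} valuations centered at $0$; equivalently, back on $\widetilde X$, to rule out a point $p\in E$ at which $\widetilde D$ is so singular that some valuation over $p$ acquires negative discrepancy, a possibility the degree-$\leqslant 1$ restriction $\widetilde D|_E$ makes implausible but does not by itself exclude. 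This is precisely the point at which the special structure of the half-point is needed, and where I would invoke --- or reprove, via the economic resolution and the multiplicity/log-discrepancy comparison at a smooth threefold point --- Kawamata's classification of divisors over terminal cyclic quotient singularities \cite{Kawamata}.
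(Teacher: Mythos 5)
The paper does not actually prove this proposition: it is imported wholesale from Kawamata's classification of divisorial contractions to terminal quotient points, so the only question is whether your argument genuinely replaces that citation. It does not. The easy direction, the implicit correction of the typo in the statement (the conclusion should read $a(E,X,D)\geqslant 0$), the computation $a(E,X,D)=\tfrac12-\nu_E(D)$, the double cover $\phi\colon\MA^3\to X$ with $\mult_0\phi^*D=2\nu_E(D)\leqslant 1$, and the ramification formula for log discrepancies are all correct, and they reduce the proposition to the assertion that $a(w,\MA^3,D')\geqslant e(w)-1$ for every valuation $w$ centered at the origin when $\mult_0 D'\leqslant 1$. But you verify this only for monomial valuations in fixed coordinates, which does not suffice (canonicity must be tested on a log resolution of the pair, whose exceptional divisors need not be monomial in coordinates chosen in advance), and at the step you yourself call ``the main obstacle'' you propose to invoke Kawamata --- which is circular, since that is precisely the reference the proposition is attributed to. As written, the attempt terminates exactly where the paper's citation begins.

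The missing step is closable without Kawamata, and more cleanly than through the covering trick. By the log pullback (Remark 2.5), for any valuation $\nu$ with center $Q$ one has $a(\nu,X,D)=a(\nu,\widetilde X,\widetilde D)+\nu(E)\,a(E,X,D)$, so under the hypothesis $a(E,X,D)\geqslant 0$ it suffices to prove that $(\widetilde X,\widetilde D)$ is canonical at every center contained in $E$. Now $\widetilde X$ is smooth along $E\cong\MP^2$, and your own degree computation $\deg\big(\widetilde D|_E\big)=2\nu_E(D)\leqslant 1$ yields $\mult_Z\widetilde D\leqslant\mult_Z\big(\widetilde D|_E\big)\leqslant 1$ for every point or curve $Z\subseteq E$. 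What remains is the classical fact that on a \emph{smooth} threefold an effective $\MQ$-divisor of multiplicity at most $1$ along a subvariety gives a pair that is canonical there --- the single-divisor analogue of Lemma 2.6 and of the Corti inequality, proved by the usual induction on blowups of points and curves, tracking that each strict transform again has multiplicity $\leqslant 1$ along the new exceptional locus. That lemma is genuinely needed, is not a formal consequence of anything you wrote (in particular, a bound of the form $\nu_w(D')\leqslant\mult_0 D'\cdot\nu_w(\mathfrak{m}_0)$ is false, as the monomial valuation with weights $(1,1,k)$ applied to $\{z=0\}$ already shows), and must be stated and proved or cited. With it, your argument becomes a complete, self-contained proof; without it, the proposal does not establish the proposition.
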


\begin{Cor}
Suppose a pair $(X,D)$ is canonical at a half-point $Q$. Then it is canonical at every curve passing through $Q$.
\end{Cor}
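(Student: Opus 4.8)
The plan is to reduce canonicity of $(X,D)$ at a curve $C\ni Q$ to the single inequality $\mult_C D\le 1$, and then to extract that inequality from canonicity at $Q$ by blowing up the half-point.

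First I would note that, $X$ being terminal, its singular locus is finite, so $X$ is smooth at the generic point $\eta_C$ of any curve $C$. Canonicity at $C$ concerns only the valuations whose center on $X$ is exactly $C$, and these are the valuations of the two-dimensional regular local ring $\mathcal{O}_{X,\eta_C}$ centred at its closed point. By the standard two-dimensional statement, $(X,D)$ is canonical at $C$ if and only if $\mult_C D\le 1$. Hence it suffices to bound the generic multiplicity of $D$ along every curve through $Q$.

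Second I would translate the hypothesis through the blow up $f\colon\widetilde X\to X$ of $Q$, with exceptional divisor $E\cong\MP^2$. Writing $f^*D=\widetilde D+mE$ with $m=\mult_E f^*D$ and using Lemma 2.9(i), one gets $a(E,X,D)=\tfrac12-m$. By Proposition 2.10 the pair is canonical at $Q$ if and only if $a(E,X,D)\ge 0$, i.e. if and only if $m\le\tfrac12$.

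Third I would convert $m\le\tfrac12$ into the multiplicity bound using the geometry of $E$. For a general line $\ell\subset E\cong\MP^2$ one has $f^*D\cdot\ell=0$ by the projection formula and $E\cdot\ell=-2$ by Lemma 2.9(iii), so $\widetilde D\cdot\ell=2m\le 1$; thus $\widetilde D|_E$ is an effective divisor of degree $2m\le 1$ on $\MP^2$. Let $P$ be any point of $\widetilde C\cap E$, where $\widetilde C$ is the proper transform of $C$. Since a plane curve has multiplicity at a point no larger than its degree, and since cutting by the surface $E$ cannot lower the multiplicity, $\mult_P\widetilde D\le\mult_P(\widetilde D|_E)\le 2m$. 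Finally $\mult_C D=\mult_{\widetilde C}\widetilde D\le\mult_P\widetilde D\le 2m\le 1$, as required.

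The main obstacle is this last transition: passing from the intrinsic discrepancy inequality at the quotient singularity to a bound on the generic multiplicity of $D$ along $C$. The factor $2$ produced by $\mathcal{O}_E(E)|_E=\mathcal{O}_E(-2)$ is precisely what makes $m\le\tfrac12$ strong enough to force $\mult_C D\le 1$; the same computation can be run in the index-one cover $\MA^3\to X$, where it reads $\mult_{\overline C}\overline D\le\mult_O\overline D=2m\le 1$ by upper semicontinuity of multiplicity, with $O$ the preimage of $Q$. One must take minor care when $\widetilde C$ meets $E$ in several points or when $C$ is singular at $Q$, but the estimate $\mult_{\widetilde C}\widetilde D\le\mult_P\widetilde D$ at a single point $P\in\widetilde C\cap E$ covers all these cases uniformly.
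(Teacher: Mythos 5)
Your proof is correct and follows essentially the same route as the paper: blow up the half-point, use Proposition 2.10 to translate canonicity at $Q$ into $\nu_E(D)\leqslant\frac{1}{2}$, and combine this with the relation $\mult_C D\leqslant 2\nu_E(D)$ coming from $\mathcal{O}_E(E)\vert_E=\mathcal{O}_E(-2)$. The only difference is cosmetic — the paper argues by contraposition and simply asserts $\nu_E(D)\geqslant\frac{1}{2}\mult_C D$, whereas you supply a justification of that inequality (via lines in $E\cong\MP^2$, or equivalently via the index-one cover).
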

\begin{proof}
Suppose $(X,D)$ is not canonical at $C$ passing through $Q$, then $m=\mult_C D>1$. Let $f:\widetilde{X}\to X$ be the blow up of the point $Q$ and let $E$ be the exceptional divisor of $f$. Then $\nu_E(D)\geqslant\frac{m}{2}>\frac{1}{2}$. On the other hand $a(E,X,0)=\frac{1}{2}$ by Lemma 2.9, thus  $a(E,X,D)<0$ and the pair is not canonical at $Q$, contradiction.
\end{proof}

\begin{Lemma}
Let $Q\in X$ be a half-point. Suppose $D$ is a divisor passing through the point $Q$. Suppose also that there is a curve $C$ passing through the point $Q$ such that $C\cdot D=\frac{1}{2}$ and $C\not\subset \ON{Supp} D$. Then the pair $(X,D)$ is canonical at $Q$.
\end{Lemma}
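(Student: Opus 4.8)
The plan is to reduce the statement to a single discrepancy computation and then to read off the needed bound from the number $C\cdot D$. Let $f:\widetilde{X}\to X$ be the blow up of the half-point $Q$, with exceptional divisor $E$. By Proposition 2.10 it is enough to prove that $(X,D)$ is canonical at $E$, i.e. that $a(E,X,D)\geqslant 0$. By Lemma 2.9(i) we have $a(E,X,0)=\tfrac12$, hence $a(E,X,D)=\tfrac12-\nu_E(D)$, and the whole lemma reduces to the single inequality $\nu_E(D)\leqslant\tfrac12$.

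To obtain this bound I would play the curve $C$ against $E$. Write $f^*D=\widetilde{D}+\nu_E(D)E$ as in Definition 2.2, where $\widetilde{D}$ is the proper transform of $D$, and let $\widetilde{C}$ be the proper transform of $C$. Since $f_*\widetilde{C}=C$, the projection formula gives
\[
\tfrac12=C\cdot D=\widetilde{C}\cdot f^*D=\widetilde{C}\cdot\widetilde{D}+\nu_E(D)\,(\widetilde{C}\cdot E).
\]
The hypothesis $C\not\subset\ON{Supp}D$ forces $\widetilde{C}\not\subset\ON{Supp}\widetilde{D}$, so $\widetilde{C}\cdot\widetilde{D}\geqslant 0$; and since $C$ passes through $Q$, its proper transform meets $E$, so $\widetilde{C}\cdot E>0$. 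As $D$ is effective we also have $\nu_E(D)\geqslant 0$. Dropping the nonnegative term $\widetilde{C}\cdot\widetilde{D}$ and using $\widetilde{C}\cdot E\geqslant 1$ then yields $\nu_E(D)\leqslant\nu_E(D)\,(\widetilde{C}\cdot E)\leqslant\tfrac12$, which is exactly what is required.

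The only place where the half-point geometry really enters, and the step I expect to require care, is the integrality claim $\widetilde{C}\cdot E\geqslant 1$: a priori an intersection number at a quotient singularity is merely rational (indeed $\tfrac12(1,1,1)$-points produce the half-integers we are exploiting in $C\cdot D=\tfrac12$), so one must rule out the value $\tfrac12$ here. The relevant fact is that the blow up of a $\tfrac12(1,1,1)$-point is the quotient of the ordinary blow up $\ON{Bl}_0\MA^3$ by the reflection induced by $-I$; this quotient is \emph{smooth} along $E$, and $E$ is Cartier in a neighbourhood of its support (in suitable local coordinates $E=\{u=0\}$, with $u$ the square of the radial coordinate upstairs). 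Consequently, for the curve $\widetilde{C}\not\subset E$ the number $\widetilde{C}\cdot E$ is the degree of $\mathcal{O}_{\widetilde{X}}(E)$ pulled back to the normalization of $\widetilde{C}$, hence a nonnegative integer, and it is strictly positive because $\widetilde{C}\cap E\neq\emptyset$; therefore $\widetilde{C}\cdot E\geqslant 1$. With this in hand the discrepancy bookkeeping above closes the argument.
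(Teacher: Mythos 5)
Your proof is correct and follows essentially the same route as the paper: blow up $Q$, apply the projection formula to $\widetilde{C}$ and $f^*D$ to get $\nu_E(D)\leqslant\tfrac12$, and conclude via Lemma 2.9 and Proposition 2.10. The only difference is that you carefully justify $\widetilde{C}\cdot E\geqslant 1$ (the paper implicitly takes this intersection to be $1$), which is a welcome refinement rather than a divergence.
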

\begin{proof}
Let $f:\widetilde{X}\to X$ be the blow up of the point $Q$. Let $\widetilde{C}$ and $\widetilde{D}$ be the proper transforms of $C$ and $D$ on $\widetilde{X}$ respectively. By the projection formula
\begin{align*}
0\leqslant \widetilde{D}\cdot \widetilde{C} = D\cdot C - \nu_E(D) E\cdot C=\frac{1}{2}-\nu_E(D),
\end{align*}
thus $\nu_E(D)\leqslant \frac{1}{2}$. By Lemma 2.9 and Proposition 2.10 the pair $(X,D)$ is canonical at $Q$.
\end{proof}

The following statements on the behavior of cycles on threefolds are well known. 
\begin{Lemma}
Let $Z$ be a $1$-cycle on a threefold $X$. Let $\sigma:\widetilde{X}\to X$ be the blow up of $B$ and let $E$ be the exceptional divisor. 
Then $\sigma^*{Z}=\sigma^{-1}Z+Z_E$, where $\ON{Supp} Z_E\subset E$ and
\begin{itemize}
	\item if $B$ is a nonsingular point, then $E\cong\MP^2$ and $\deg Z_E=\mult_B Z$.
	\item if $B$ is a smooth curve then $Z_E\equiv(C\cdot B)_S f$, where $f$ is a class of fiber of ruled surface $E$ and $S$ is some surface conatining $C$ and $B$ which is smooth at every point of $C\cap B$.
\end{itemize}
\end{Lemma}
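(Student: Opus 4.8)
The plan is to reduce both bullets to two ingredients: the numerical identity $\sigma^*Z\cdot E=0$ satisfied by the pullback of a $1$-cycle, and an explicit evaluation of $\sigma^{-1}Z\cdot E$. Since $\sigma^*$, the strict transform $\sigma^{-1}(-)$, $\mult_B(-)$ and intersection numbers are all additive, it suffices to treat an irreducible reduced curve $Z=C$ and extend by linearity. The property $\sigma^*Z\cdot E=0$ I would take from the projection formula for the cycle pullback: applying $\sigma_*\big(\sigma^*Z\cdot D'\big)=Z\cdot\sigma_*D'$ with $D'=E$ and noting that $\sigma_*E=0$ (the divisor $E$ has image $B$ of lower dimension, whether $B$ is a point or a curve) gives $\sigma^*Z\cdot E=0$.

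First I would pin down the shape of $Z_E$. Writing $\sigma^*C=\sigma^{-1}C+Z_E$ with $\ON{Supp}Z_E\subset E$ and pushing forward, the identities $\sigma_*\sigma^*C=C$ and $\sigma_*\sigma^{-1}C=C$ force $\sigma_*Z_E=0$; as $C$ is effective, $Z_E$ is an effective cycle on $E$ contracted by $\sigma$. When $B$ is a point, $E\cong\MP^2$ and every curve in $E$ is contracted, so $Z_E\equiv(\deg Z_E)\,\ell$ for the line class $\ell$. When $B$ is a curve, $E=\MP(N_{B/X})$ is ruled over $B$ (Lemma 2.8), and a contracted effective cycle on $E$ must be vertical, hence $Z_E\equiv d\,f$ for the fibre class $f$ and some $d\geqslant0$.

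Next I would evaluate $\sigma^{-1}C\cdot E$. For a point I would use the classical fact that the strict transform of a curve of multiplicity $m$ at a smooth point meets the exceptional $\MP^2$ in $m$ points with multiplicity, i.e. $\sigma^{-1}C\cdot E=\mult_BC$; together with $\mathcal{O}_E(E)=\mathcal{O}_{\MP^2}(-1)$, so $\ell\cdot E=-1$, the relation $0=\sigma^*C\cdot E=\mult_BC-\deg Z_E$ gives $\deg Z_E=\mult_BC$, which is the first bullet. For a curve I would bring in the surface $S\supset C\cup B$ that is smooth at every point of $C\cap B$ and restrict $\sigma$ to the strict transform $\widetilde S$. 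Because $S$ is smooth along $B$ and $B$ is a Cartier divisor on $S$, the map $\widetilde S\to S$ is an isomorphism; in suitable local coordinates one checks that under it $\sigma^{-1}C$ corresponds to $C$ and the section $E\cap\widetilde S$ corresponds to $B$. Hence $\sigma^{-1}C\cdot_{\widetilde X}E=\sigma^{-1}C\cdot_{\widetilde S}\big(E|_{\widetilde S}\big)=(C\cdot B)_S$, and with $E\cdot f=-1$ (Lemma 2.8(iii)) the relation $0=\sigma^*C\cdot E=(C\cdot B)_S-d$ yields $d=(C\cdot B)_S$, the second bullet.

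The main obstacle is the curve case, specifically the reduction to the auxiliary surface $S$: one must verify that $\widetilde S\to S$ is an isomorphism carrying $\sigma^{-1}C$ to $C$ and $E\cap\widetilde S$ to $B$, which is exactly where the smoothness of $S$ along $C\cap B$ enters and which makes the intrinsic number $\sigma^{-1}C\cdot E$ equal to the surface intersection $(C\cdot B)_S$ irrespective of the chosen $S$. Once $\sigma^*Z\cdot E=0$ and this identification are in place, the remaining steps are one-line numerical computations.
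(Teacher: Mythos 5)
The paper gives no proof of this lemma at all --- it is introduced with ``the following statements \dots{} are well known'' --- so there is nothing to compare your argument against; judged on its own, your proof is correct and essentially complete. The two pillars are sound: the identity $\sigma^*Z\cdot E=0$ (immediate from the projection formula, or true by definition if one takes the numerical pullback of $1$-cycles used throughout the paper), and the local verification that $\widetilde{S}\to S$ is an isomorphism near the points of $C\cap B$ carrying $E\cap\widetilde{S}$ to $B$ and $\sigma^{-1}C$ to $C$, which is exactly where the smoothness hypothesis on $S$ enters and which also shows that $(C\cdot B)_S$ does not depend on the choice of $S$. Two small refinements. First, you assert rather than prove that $Z_E$ is effective in order to conclude that it is vertical; this can be bypassed entirely: since $\operatorname{Supp}Z_E\subset E$ you may write $Z_E\equiv a s+b f$ in terms of the section and fibre classes of the ruled surface $E$, and $\sigma_*(as+bf)=a[B]$, so $\sigma_*Z_E=0$ already forces $a=0$ and $Z_E\equiv bf$ with no positivity needed (the same remark applies with the line class $\ell$ when $B$ is a point). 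Second, the hypothesis only gives smoothness of $S$ at the points of $C\cap B$, so $\widetilde{S}\to S$ is an isomorphism only over a neighbourhood of those points; that is all your computation uses, since $\sigma^{-1}C\cap E$ lies over $C\cap B$, but your phrase ``smooth along $B$'' overstates what is assumed and should be adjusted.
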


\begin{Lemma}
Let $F$ be a hyperplane in $\MC^3$. Let $L$ be a curve in $F$ and let $C$ be an irreducible curve which does not lie in $F$. Let $\sigma: X\to \MC^3$ be a blow up of $L$. Let $E$ be the exceptional divisor and let $f\in A^{2}(X)$ be the class of a fiber of the ruled surface $E$. Then $\sigma^* C \equiv \sigma^{-1} C+kf$, where $k\leqslant C\cdot F$.
\end{Lemma}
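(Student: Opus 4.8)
The goal is to bound the fiber-class multiplicity $k$ in $\sigma^* C \equiv \sigma^{-1}C + kf$ by the intersection number $C \cdot F$. The plan is to compute $k$ intersection-theoretically rather than by local analysis. By Lemma 2.7(vi) we have $f \cdot \sigma^*(D) = 0$ for any divisor $D$, so pairing the pushforward-pullback relation against a well-chosen divisor will isolate $k$. The natural choice is the hyperplane $F$ itself, since $L \subset F$ and $C \not\subset F$.

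**The key computation.**

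I would pair both sides of $\sigma^* C \equiv \sigma^{-1}C + kf$ with $\sigma^*(F)$. The left-hand side gives $\sigma^* C \cdot \sigma^*(F) = C \cdot F$ by the projection formula. On the right, Lemma 2.7(v) gives $f \cdot \sigma^*(F) = 0$, so this term drops out, which at first seems to lose the information about $k$. The resolution is to instead pair against the proper transform $\widetilde{F} = \sigma^{-1}(F)$, writing $\sigma^*(F) = \widetilde{F} + E$ (here $L$ is a divisor inside the hyperplane $F$, so $F$ is smooth along $L$ and the coefficient of $E$ is one). Pairing with $\widetilde{F} = \sigma^*(F) - E$ and using $f \cdot \sigma^*(F) = 0$ reduces the computation to understanding $\widetilde{F}\cdot\sigma^{-1}C$ and the intersections $E \cdot \sigma^{-1}C$ and $E\cdot f = -1$ from Lemma 2.7(iii).

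**Extracting the bound.**

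Pairing $\sigma^* C \equiv \sigma^{-1}C + kf$ against $E$ and using $E \cdot f = -1$ together with $E \cdot \sigma^*(C) = (C \cdot \text{(nothing)})$—more precisely using Lemma 2.7(iv), $E \cdot \sigma^*(D) = (L \cdot D)f$ applied appropriately—yields an expression of the form $k = (\sigma^{-1}C \cdot E) = \#\{\sigma^{-1}C \cap E\}$ counted with multiplicity. Geometrically $k$ counts the points where the strict transform of $C$ meets the exceptional divisor over $L$, equivalently the points of $C \cap L$ at which branches of $C$ are tangent to, or lie over, the blown-up curve. Each such intersection point of $C$ with $L$ contributes, and the total is bounded by the number of intersection points of $C$ with the ambient hyperplane $F$ containing $L$, since $L \subset F$ forces every point of $\sigma^{-1}C \cap E$ to lie over a point of $C \cap F$. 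This gives $k \leqslant C \cdot F$.

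**Main obstacle.**

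The delicate point is the last inequality rather than an equality: $k$ equals the local contribution of $C \cap L$ to the exceptional divisor, whereas $C \cdot F$ counts all intersections of $C$ with the full hyperplane. The content is that every contribution to $k$ is supported over $L \subset F$ and is counted (with at least the same multiplicity) inside $C \cdot F$, while $C \cdot F$ may receive additional positive contributions from points of $C \cap (F \setminus L)$. I would make this precise by comparing $\mult_p(C \cap L) \leqslant \mult_p(C \cap F)$ at each $p \in L$, using that $L$ is a subvariety of the smooth divisor $F$, and then summing and invoking the nonnegativity of the remaining intersection contributions away from $L$. Establishing this local multiplicity comparison cleanly is where the real work lies; the global intersection-theoretic bookkeeping via Lemma 2.7 is then routine.
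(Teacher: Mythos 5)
Your proposal is correct and follows essentially the same route as the paper: intersecting $\sigma^* C \equiv \sigma^{-1}C + kf$ with $E$ and using $E\cdot\sigma^*C=0$, $E\cdot f=-1$ gives $k = E\cdot\sigma^{-1}C$, and the bound comes from comparing $E$ with $\sigma^*F$. The only place you overestimate the difficulty is the step you flag as the ``main obstacle'': no local multiplicity comparison at points of $C\cap L$ is needed, because writing $E = \sigma^*F - \sigma^{-1}F$ and intersecting with $\sigma^{-1}C$ gives $E\cdot\sigma^{-1}C = \sigma^*F\cdot\sigma^{-1}C - \sigma^{-1}F\cdot\sigma^{-1}C \leqslant \sigma^*F\cdot\sigma^{-1}C = F\cdot C$, where the inequality is just nonnegativity of the proper intersection $\sigma^{-1}F\cdot\sigma^{-1}C$ (valid since $C\not\subset F$) and the last equality is the projection formula --- this is exactly the paper's one-line proof.
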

\begin{proof}
By the projection formula and Lemma 2.8
\begin{align*}
k=E\cdot \sigma^* C - kE\cdot f=E\cdot \sigma^{-1} C\leqslant \sigma^* F \cdot \sigma^{-1}C=F\cdot C.
\end{align*}
\end{proof}

\begin{Lemma}
Let $D_1$ and $D_2$ be generic divisors in a linear system $\mathcal{M}$ on $X$ and let $Z=D_1\cdot D_2$. Let $\sigma:\widetilde{X}\to X$ be the blow up of $B$, let $E$ be the exceptional divisor, and let $\widetilde{D}_i$ be the proper transform of $D_i$ on $\widetilde{X}$. Then
\begin{align*}
\widetilde{D}_1\cdot \widetilde{D}_2\equiv \sigma^{*}(Z)+Z_E,
\end{align*}
where $\ON{Supp} Z_E\subset E$. 

Suppose also that $B$ is a curve. Let $m=\nu_E(\mathcal{M})$ and let $f\in A^2(\widetilde{X})$ be a fiber of a ruled surface $E$. Then 
\begin{align*}
Z_E\equiv m^2E^2-2m(D_1\cdot B)f.
\end{align*}
\end{Lemma}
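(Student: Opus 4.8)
The plan is to expand the intersection of the two proper transforms in terms of the total transforms and the exceptional divisor, and then simply collect the terms that meet $E$. Since $D_1$ and $D_2$ are generic members of $\mathcal{M}$ they share the same multiplicity $m=\nu_E(\mathcal{M})$ along $E$, so by Definition 2.3 we have $\sigma^*D_i=\widetilde{D}_i+mE$ for $i=1,2$, i.e. $\widetilde{D}_i=\sigma^*D_i-mE$. Expanding the product gives
\begin{align*}
\widetilde{D}_1\cdot\widetilde{D}_2=\sigma^*D_1\cdot\sigma^*D_2-m\,\sigma^*D_1\cdot E-m\,\sigma^*D_2\cdot E+m^2E^2.
\end{align*}

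For the first assertion I would use that pullback along the birational morphism $\sigma$ is multiplicative on numerical classes, $\sigma^*D_1\cdot\sigma^*D_2\equiv\sigma^*(D_1\cdot D_2)=\sigma^*Z$; this follows from the projection formula together with $\sigma_*\sigma^*=\mathrm{id}$, the point being that $\sigma$ is an isomorphism over $X\setminus\sigma(E)$, which forces the difference of the two sides to be a cycle supported on $E$. Setting $Z_E:=\widetilde{D}_1\cdot\widetilde{D}_2-\sigma^*Z$, the three remaining terms in the expansion each carry a factor of $E$, so $\ON{Supp}Z_E\subset E$, which is exactly the first claim.

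For the second assertion I specialize to the case where $B$ is a smooth curve, so that $E$ is the ruled surface $\MP(N_{B/X})$ and Lemma 2.8 applies. The two mixed terms are computed by Lemma 2.8(iv): $E\cdot\sigma^*D_i=(B\cdot D_i)f$. Because $D_1$ and $D_2$ are linearly, hence numerically, equivalent members of $\mathcal{M}$ we have $B\cdot D_1=B\cdot D_2=D_1\cdot B$, so that $m(\sigma^*D_1+\sigma^*D_2)\cdot E=2m(D_1\cdot B)f$. Substituting into the expansion and using $\sigma^*D_1\cdot\sigma^*D_2\equiv\sigma^*Z$ leaves
\begin{align*}
Z_E\equiv m^2E^2-2m(D_1\cdot B)f,
\end{align*}
as claimed.

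The only step that I expect to require genuine care — rather than a routine expansion plus an application of Lemma 2.8 — is the multiplicativity $\sigma^*D_1\cdot\sigma^*D_2\equiv\sigma^*Z$: here one is intersecting two surface classes pulled back from $X$ and comparing the result with the pullback of the $1$-cycle $Z=D_1\cdot D_2$. Away from $E$ the equality is clear since $\sigma$ is an isomorphism there, and the projection formula identifies the $\sigma_*$-images, so the difference is an $E$-supported cycle that is absorbed into $Z_E$ in the first part and pinned down numerically by Lemma 2.8 in the second; making this identification precise is the one place where the smoothness of $X$ and of $B$ is genuinely used.
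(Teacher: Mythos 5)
Your proof is correct and follows essentially the same route as the paper: write $\widetilde{D}_i=\sigma^*D_i-mE$, expand the product, identify $\sigma^*D_1\cdot\sigma^*D_2$ with $\sigma^*Z$ up to an $E$-supported cycle, and evaluate the cross terms via Lemma 2.8(iv). The only difference is that you spell out the justification for $\sigma^*D_1\cdot\sigma^*D_2\equiv\sigma^*Z$, which the paper leaves implicit.
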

\begin{proof}
Since $\sigma^*D_i=\widetilde{D}_i+mE$, by Lemma 2.8
\begin{align*}
\widetilde{D}_1\cdot \widetilde{D}_2\equiv D_1\cdot D_2+m^2E^2-2m&\sigma^*(D_1\cdot E) 
\equiv \sigma^*(Z)+m^2E^2-2m(D_1\cdot B)f.
\end{align*}
\end{proof}

%
%

\section{Klein simple group in Cremona group}
In this section we reprove Theorem 1.7 and prove Theorem 1.9.

\begin{Def}[\cite{GFanoI}]
Suppose a group $G$ acts on $X$. We say that $\pi:X\to Z$ is a $G$-\emph{Mori fiber space} if it satisfies the following conditions:
\begin{itemize}
	\item the variety $X$ is terminal and $G\mathds{Q}$-factorial, that is every $G$-invariant divisor on $X$ is $\MQ$-Cartier;
	\item morphism $\pi$ is flat, $G$-equivariant and, the invariant relative Picard rank $\rho^G(X/Z)=1$;
	\item the generic fiber of $\pi$ is a Fano variety.
\end{itemize}
\end{Def}

Lemma 1.4 implies that there is a rational $G$-Mori fiber space corresponding to every embedding of $G$ into $\ON{Cr}_n$. By a $G$-del Pezzo fibration $X$ we mean a three dimensional $G$-Mori fiber space over a projective line, in this case the general fiber is a $G$-del Pezzo surface. Varieties $X_n$ from the example 1.8 are the examples of $\ON{PSL}_2(7)$-del Pezzo fibrations. 

The group $\mathcal{A}_6$ has a unique central extension with a $3$-dimensional representation. Thus there is a unique action of $\mathcal{A}_6$ on $\MP^2$. The group $\ON{PSL}_2(7)$ has two $3$-dimensional representations. The representations are conjugate by the outer automorphism and the representations of central extensions induce the same action, hence the action of $\ON{PSL}_2(7)$ on $\MP^2$ is unique. There is also a unique del Pezzo surface $S_2$ of degree $2$ with the action of $\ON{PSL}_2(7)$, it is a double cover $r:S_2\to \MP^2$ branched over the Klein quartic. There are no other del Pezzo surfaces admitting the action of $\mathcal{A}_6$ or $\ON{PSL}_2(7)$.

\begin{Th}[{\cite[Theorem~1.4]{Belousov}}]
Let $S$ be a del Pezzo surface with log terminal singularities.
\begin{itemize}
	\item Suppose $S$ admits a $\ON{PSL}_2(7)$-action. Then $S$ is $\MP^2$ or $S_2$.
	\item Suppose $S$ admits an $\mathcal{A}_6$-action. Then $S$ is $\MP^2$.
\end{itemize}
\end{Th}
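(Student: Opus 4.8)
The strategy is to reduce the classification to the known classification of finite automorphism groups of \emph{smooth} del Pezzo surfaces. Write $G=\ON{PSL}_2(7)$ or $\mathcal{A}_6$. I will use only that $G$ is simple and non-abelian and that it has no faithful two-dimensional projective representation, together with the one representation-theoretic fact that separates the two cases: $\ON{PSL}_2(7)$ preserves a quartic in $\MP^2$ (the Klein quartic), whereas the Valentiner action of $\mathcal{A}_6$ on $\MP^2$ has no invariant quartic (its invariants begin in degree $6$). First I would pass to the minimal resolution $f\colon\widetilde S\to S$, a smooth rational surface carrying a lifted $G$-action, and run a $G$-equivariant minimal model program. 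By the Manin--Iskovskikh classification of $G$-minimal smooth rational surfaces the output is a $G$-equivariant morphism $\widetilde S\to S'$, where $S'$ is either $\MP^2$, a smooth $G$-del Pezzo surface with $\rho^G(S')=1$, or a $G$-conic bundle with $\rho^G(S')=2$.

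Next I would dispose of the conic bundle case. Such a bundle $S'\to\MP^1$ induces a homomorphism $G\to\ON{Aut}(\MP^1)=\ON{PGL}_2(\MC)$; since every finite subgroup of $\ON{PGL}_2(\MC)$ is cyclic, dihedral, $\mathcal{A}_4$, $\mathcal{S}_4$ or $\mathcal{A}_5$, and $G$ is simple of order $>60$, this homomorphism is trivial. Then $G$ preserves every fibre and acts faithfully on a general one, which yields $G\hookrightarrow\ON{PGL}_2(\MC)$, a contradiction. The same argument excludes $\MP^1\times\MP^1$ (degree $8$). Hence $S'$ is $\MP^2$ or a smooth $G$-del Pezzo surface with $\rho^G(S')=1$.

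Next I would bound the degree $d=K_{S'}^2$. For $d\leqslant 6$ the connected component $\ON{Aut}^0(S')$ is solvable, so simplicity forces $G$ into the finite group $\Gamma\subset W(R_{9-d})$ sitting in $1\to\ON{Aut}^0(S')\to\ON{Aut}(S')\to\Gamma\to 1$. For $d\in\{4,5,6\}$ the orders $12,120,1920$ of $W(R_{9-d})$ are not divisible by $|G|$, and $d=7$ is never $G$-minimal (its unique central $(-1)$-curve is $G$-invariant). For $\ON{PSL}_2(7)$ this leaves $d\in\{1,2\}$, since $7\nmid|W(E_6)|$; for $\mathcal{A}_6$ degree $3$ is excluded because no smooth cubic surface has $\mathcal{A}_6$ in its automorphism group (the classification of automorphisms of cubic surfaces; the largest simple one is $\mathcal{A}_5$). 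Degree $1$ is excluded for both groups simultaneously: on a del Pezzo surface of degree $1$ the pencil $|-K|$ has a unique base point, necessarily $G$-fixed, and a $G$-fixed point gives a faithful action on the tangent plane, i.e. a faithful two-dimensional representation, which $G$ does not have. Finally, in degree $2$ the surface is the double cover of $\MP^2$ branched over a $G$-invariant quartic; such a quartic exists exactly for $\ON{PSL}_2(7)$, giving $S'\cong S_2$, and not for $\mathcal{A}_6$. Thus the only smooth $G$-minimal models are $\MP^2$ for both groups and $S_2$ for $\ON{PSL}_2(7)$.

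It remains to show that both $f$ and the minimal model program were trivial, so that $S=S'$. If $\widetilde S=S'$, then, since $\rho^G(S')=1$, the morphism $f\colon S'\to S$ is an isomorphism and $S$ is smooth, equal to $\MP^2$ or $S_2$ as required. The difficulty is to rule out the remaining possibility that $\widetilde S\to S'$ contracts at least one $G$-orbit, equivalently that $S$ is singular. Here $G$ has no fixed points on $\MP^2$ or $S_2$, and, by the same two-dimensional representation argument applied to the local quotient structure, no fixed point among the singularities of $S$; hence both the blown-up points on $S'$ and the singular points of $S$ occur in $G$-orbits of size at least $21$ (respectively $36$). Turning this largeness into a contradiction is the heart of the matter, and I expect it to be the main obstacle: the naive estimate via $K^2$ fails, because non-canonical log terminal singularities can make $K_{\widetilde S}^2$ arbitrarily negative, so $\widetilde S$ need not be a weak del Pezzo surface and the clean bound of $\leqslant 8$ blow-ups is unavailable. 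This step should be settled either by a boundedness statement for log del Pezzo surfaces admitting a faithful $G$-action, or by a direct discrepancy-and-orbit analysis showing that no $G$-invariant configuration of exceptional curves is compatible with the ampleness of $-K_S$.
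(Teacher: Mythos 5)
First, a remark on the comparison itself: the paper does not prove this statement at all --- it is quoted verbatim from Belousov (\cite{Belousov}, Theorem~1.4) and used as a black box, so there is no internal proof to measure your attempt against. Judged on its own terms, your reduction to smooth $G$-minimal models is essentially sound: the conic-bundle exclusion via $|G|>60$, the Weyl-group divisibility bounds for $4\leqslant d\leqslant 6$, the base-point argument in degree $1$, and the invariant-quartic dichotomy in degree $2$ are all correct and are the standard way to classify the smooth $G$-minimal surfaces. But the proof is incomplete exactly where the actual content of the theorem lies, and you say so yourself: nothing in your argument excludes a singular $S$. A theorem whose hypothesis is ``log terminal singularities'' and whose conclusion is ``$S$ is smooth'' is not proved by an argument that ends with ``ruling out the singular case is the main obstacle.''

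The missing ingredient is a quantitative bound on the singular locus, and it is a genuine theorem rather than a formal consequence of anything you wrote. The standard route is: run the $G$-equivariant MMP on $S$ itself (legitimate, since log terminal surface singularities are quotient singularities), discard the conic-bundle output as you do, and reduce to $\rho^G(S)=1$; then invoke Belousov's earlier result that a del Pezzo surface with log terminal singularities and Picard number one has at most four singular points --- a consequence of orbifold Noether/Bogomolov--Miyaoka--Yau type inequalities, and precisely the ``boundedness statement for log del Pezzo surfaces'' you hoped for. Since $\Sing(S)$ is $G$-invariant, every orbit in it has size at least $7$ for $\ON{PSL}_2(7)$ (resp.\ $6$ for $\mathcal{A}_6$), the minimal index of a proper subgroup, unless some singular point is $G$-fixed; the fixed-point case is excluded by letting the simple group $G$ act on the exceptional tree of the minimal resolution and using that $G$ admits no faithful action on $\MP^1$ or on a one-dimensional normal direction. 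Hence $\Sing(S)=\emptyset$ and your smooth classification closes the argument. Without this input the proof does not close: as you correctly observe, $K^2$ of the minimal resolution is not bounded below, so orbit largeness alone yields no contradiction.
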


Thus a generic fiber of an $\mathcal{A}_6$-del Pezzo fibration is $\MP^2$ and of a $\ON{PSL}_2(7)$-del Pezzo fibration is $S_2$ or $\MP^2$. To prove Theorem 1.7 we show that an $\mathcal{A}_6$-del Pezzo fibration, can be transformed by an $\mathcal{A}_6$-equivariant fiberwise map into $\MP^2\times\MP^1$. Then we show that $\MP^2\times\MP^1$ does not have $\mathcal{A}_6$-equivariant fiberwise maps to $\mathcal{A}_6$-del Pezzo fibrations other than itself. We prove Theorem 1.9 similarly.

We say a map $\chi: X\dasharrow Y$ between fibrations $X\to B$ and $Y\to B$ is \emph{fiberwise} if it maps a fiber over $t\in B$ into a fiber over $t$ and it is an isomorphism on a generic fiber. Equivalently $\chi$ is a fiberwise map if it induces an isomorphism of the general fibers $X/B$ and $Y/B$ over $\MC(B)$.

\begin{Lemma}
Let $\pi: X\to B$ and $\pi^\prime: Y\to B$ be $G$-del Pezzo fibrations such that $G$ acts trivially on the base. Suppose that the general fibers $X/B$ and $Y/B$ of $\pi$ and $\pi^\prime$ are isomorphic as surfaces over ${\MC(B)}$ and suppose that surface $X/B$ admits a unique $G$-action up to isomorphism. Then there exists a $G$-equivariant fiberwise map $X\dasharrow Y$.
\end{Lemma}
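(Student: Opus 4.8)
The plan is to reduce the statement to a conjugation argument inside the automorphism group of the generic fibre. Set $K=\MC(B)$ and write $\mathcal{X}=X/B$ and $\mathcal{Y}=Y/B$ for the generic fibres, which are del Pezzo surfaces over $K$. By hypothesis there is an isomorphism $\phi\colon\mathcal{X}\to\mathcal{Y}$ of surfaces over $K$; by the characterization of fiberwise maps recalled above, $\phi$ already encodes a fiberwise birational map $X\dashrightarrow Y$, but it need not respect the $G$-actions. Since $G$ acts trivially on $B$, the action of $G$ on $X$ (resp. $Y$) preserves the generic fibre and hence induces a homomorphism $\rho_X\colon G\to\ON{Aut}_K(\mathcal{X})$ (resp. $\rho_Y\colon G\to\ON{Aut}_K(\mathcal{Y})$).

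Next I would transport $\rho_Y$ to $\mathcal{X}$ along $\phi$: the assignment $g\mapsto\phi^{-1}\circ\rho_Y(g)\circ\phi$ defines a second homomorphism $\rho_Y'\colon G\to\ON{Aut}_K(\mathcal{X})$, i.e. a second $G$-action on $\mathcal{X}$ over $K$. By the hypothesis that $\mathcal{X}$ carries a unique $G$-action up to isomorphism, the two actions $\rho_X$ and $\rho_Y'$ are conjugate: there is $\psi\in\ON{Aut}_K(\mathcal{X})$ with $\psi\circ\rho_X(g)\circ\psi^{-1}=\rho_Y'(g)$ for all $g\in G$.

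Now set $\Phi=\phi\circ\psi\colon\mathcal{X}\to\mathcal{Y}$. A direct check shows that $\Phi$ is $G$-equivariant: for every $g\in G$,
\begin{align*}
\Phi\circ\rho_X(g)=\phi\circ\psi\circ\rho_X(g)=\phi\circ\rho_Y'(g)\circ\psi=\rho_Y(g)\circ\phi\circ\psi=\rho_Y(g)\circ\Phi.
\end{align*}
Thus $\Phi$ is a $G$-equivariant isomorphism of the generic fibres over $K$, and by the characterization of fiberwise maps it corresponds to a $G$-equivariant fiberwise birational map $X\dashrightarrow Y$, as required.

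The one point that needs care---and the step I expect to be the main obstacle---is that the uniqueness of the $G$-action must be applied over the function field $K=\MC(B)$ rather than over $\MC$, so that the conjugating automorphism $\psi$ is itself defined over $K$ and the composite $\Phi$ is a morphism over $K$. For the relevant fibres (where $\mathcal{X}$ is a form of $\MP^2$ or of $S_2$) one has to know that the rigidity of the $G$-action recorded before the lemma descends to the generic fibre. Once this is granted, the conjugation argument above closes the proof.
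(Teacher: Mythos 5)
Your argument is correct and is essentially the paper's own proof written out in more detail: the paper likewise passes to the generic fibres, notes that the $G$-actions descend there, and uses the uniqueness of the $G$-action on $X/B$ to replace the given isomorphism by a $G$-equivariant one, which is exactly your conjugation by $\psi$. Your closing caveat about working over $K=\MC(B)$ is already built into the hypothesis, since the lemma assumes uniqueness of the $G$-action on $X/B$ as a surface over $\MC(B)$.
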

\begin{proof}
An isomorphism map $X/B \to Y/B$ induces the isomorphism of fields $\chi^*: \MC(X)\to \MC(Y)$. Thus birational map corresponding to $\chi^*$ is a fiberwise map by definition. The group $G$ acts trivially on $\MC(B)$, hence the $G$-action on $X$ and $Y$ induces the action on $X/B$ and $Y/B$. Since the $G$-action on $X/B$ is unique, we may choose the isomorphism $\chi_B$ in such a way that it is $G$-equivariant. Then $\chi^*$ and the corresponding fiberwise maps are $G$-equivariant as well.
\end{proof}

Given a del Pezzo fibration we can consider its general fiber as a quartic in $\MP(1,1,1,2)_{\MC(t)}$. Algebraic operations we do on the equation of the general fiber correspond to the fiberwise transformations of the del Pezzo fibration.

\begin{Example}
Consider, a double cover $X$ of $\MC^1\times\MP^2$ branched over a central fiber $\{t=0\}$ and a divisor which is a Klein quartic in every fiber. Its general fiber is defined by the equation $w^2=t(x^3y+y^3z+z^3x)$ in $\MP_{\MC(t)}(1,1,1,2)$. 

Clearly, $X$ is canonical along preimage of a Klein quartic in the central fiber $F$. We may blow up this curve and then we can contract the proper transform of $F$ into a singular point of the type $\frac{1}{2}(1,1,1)$. Let $\widetilde{X}$ be the variety we acquire after performing these operations. The equation of its general fiber is $t(w^\prime)^2=x^3y+y^3z+z^3x$. Here we have made a coordinate change $wt=w^\prime$, and have divided both sides of the equation by $t$.
\end{Example}

\begin{Lemma}
Let $\pi:X\to \MP^1$ be a $\ON{PSL}_2(7)$-del Pezzo fibration of degree $2$. Then there is a fiberwise $\ON{PSL}_2(7)$-birational map to $X_n$ for some $n$.
\end{Lemma}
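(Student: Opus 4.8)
The plan is to reduce the statement to a normal-form computation over the function field $\MC(t)$ of the base, using Lemma~3.3 as the transfer principle between function-field surfaces and fiberwise maps. By Theorem~3.2, a $\ON{PSL}_2(7)$-del Pezzo fibration of degree $2$ has general fiber $S/\MC(t)$, where $S$ is either $\MP^2$ or the double cover $S_2$ of $\MP^2$ branched over the Klein quartic. Since $\pi$ has degree $2$, the general fiber is of del Pezzo degree $2$, which rules out $\MP^2$; hence $X/\MP^1$ is a form of $S_2$ over $\MC(t)$. The $\ON{PSL}_2(7)$-action on $S_2$ is unique (as recalled in the paragraph preceding Theorem~3.2), so Lemma~3.3 will apply once I exhibit a target $X_n$ whose general fiber is $S_2$-isomorphic to $X/\MP^1$ over $\MC(t)$.

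First I would write the general fiber of $X$ as a quartic double cover, i.e. as $w^2 = f_4(x,y,z)$ in $\MP(1,1,1,2)_{\MC(t)}$, where $f_4$ is a quartic form with coefficients in $\MC(t)$ on which $\ON{PSL}_2(7)$ acts. The representation theory is the key input: $\ON{PSL}_2(7)$ has an essentially unique three-dimensional representation (up to the outer automorphism, which does not change the induced action), and the space of $\ON{PSL}_2(7)$-invariant quartics in three variables is one-dimensional, spanned by the Klein quartic $x^3y+y^3z+z^3x$. Therefore, after an $\ON{PSL}_2(7)$-equivariant coordinate change, the branch quartic must be $f_4 = a(t)\,(x^3y+y^3z+z^3x)$ for some $a(t)\in\MC(t)$, and the equation of the general fiber becomes $w^2 = a(t)(x^3y+y^3z+z^3x)$.

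Next I would normalize $a(t)$ by the algebraic operations on the equation that correspond to fiberwise transformations, exactly as in Example~3.4. Clearing denominators, absorbing squares into $w$ via $w\mapsto \lambda(t)w$, and rescaling lets me replace $a(t)$ by its square-free part, so that the equation takes the shape $a_{2n}(u,v)\,w^2 = x^3y+y^3z+z^3x$ with $a_{2n}$ a square-free homogeneous form of even degree $2n$ in the homogenizing coordinates $(u:v)$ of $\MP^1$; this is precisely the defining equation of $X_n$ from Example~1.8. Square-freeness is what guarantees that the resulting model has only the allowed $\frac12(1,1,1)$-singularities and no worse, matching the geometry of $X_n$. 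Having produced an $X_n$ with general fiber $S_2$-isomorphic to that of $X$ over $\MC(t)$, Lemma~3.3 (with $B=\MP^1$, $G$ acting trivially on the base, and the $\ON{PSL}_2(7)$-action on $S_2$ unique) yields the desired fiberwise $\ON{PSL}_2(7)$-equivariant birational map $X\dashrightarrow X_n$.

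I expect the main obstacle to be the normalization step rather than the representation theory: one must check that the degree $2n$ of $a_{2n}$ can genuinely be taken even and square-free, that the operations used are all fiberwise and $\ON{PSL}_2(7)$-equivariant (they act only on $w$ and on the base coordinates, so they commute with the linear $\ON{PSL}_2(7)$-action on $x,y,z$), and that parity/degree obstructions coming from the grading of $\ON{Cox}(Y_n)$ do not force an odd power of $t$ that cannot be absorbed. Once the equation is in the form above, the identification with $X_n$ and the invocation of Lemma~3.3 are routine.
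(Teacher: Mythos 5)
Your proposal is correct and follows essentially the same route as the paper: identify the general fiber as a double cover of $\MP^2_{\MC(t)}$ branched over a $\ON{PSL}_2(7)$-invariant quartic, use the one-dimensionality of the space of invariant quartics to force the branch locus to be $s(t)$ times the Klein quartic, normalize $s(t)$ to a square-free polynomial by rescaling $w$, homogenize to even degree $2n$, and conclude via Lemma 3.3. The parity issue you flag is resolved exactly as you suspect: the paper takes $n$ with $\deg(rq)=2n$ or $2n-1$ and homogenizes by $v^{2n}$, which at worst introduces a simple root at $v=0$ and keeps the form square-free.
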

\begin{proof}
The general fiber $X/B$ of $\pi$ is a del Pezzo surface of degree $2$ over $\MC(t)$, that is it is a double cover of $\MP^2_{\MC(t)}$ branched over a quartic $q_4\in\MC[x,y,z](t)$. We can arrange the terms by the powers of $t$ 
\begin{align*}
q_4(x,y,z)=\sum_{n=-\infty}^{+\infty} p_i(x,y,z)t^n.
\end{align*}
Since $X$ and hence $q_4$ are $\ON{PSL}_2(7)$-invariant, all quartics $p_i$ are also $\ON{PSL}_2(7)$-invariant, therefore each $p_i$ is a multiple of Klein quartic. Thus $q_4(x,y,z)=s(t)(x^3y+y^3z+z^3x)$, for some $s\in \MC(t)$ and $X/B$ is a quartic in $\MP_{\MC(t)}(1,1,1,2)$ defined by the equation 
\begin{align*}
w^2=s(t)(x^3y+y^3z+z^3x).
\end{align*}

Let $s=\frac{q}{r}$, where $q,r\in\MC[t]$, then $\frac{r}{q}w^2=x^3y+y^3z+z^3x$. Let us change the coordinate $w=q\bar{w}$, then $r(t)q(t)\bar{w}^2=x^3y+y^3z+z^3x$. We may also assume that $rq$ does not have multiple roots, since we can change $\bar{w}$ again to get rid of them. Let $n$ be an integer such that $\deg rq=2n$ or $\deg(rq)=2n-1$. Consider a variety $X_n$ defined by the equation 
\begin{align*}
v^{2n}r\Big(\frac{u}{v}\Big)q\Big(\frac{u}{v}\Big)\bar{w}^2=x^3y+y^3z+z^3x.
\end{align*}
Since the generic fibers have the same equations there is fiberwise $\ON{PSL}_2(7)$-equivariant map from $X$ to $X_n$ by Lemma 3.3.
\end{proof}

We generalize \cite[Theorem~1.5]{CC} for our purposes.
\begin{Th}
Let $\pi:V\to Z$ be a $G$-Fano fibration over a curve $Z$. Suppose the pair $(V,\frac{1}{n}\mathcal{M})$ is canonical at any subset of a fiber $F$ for any $G$-equivariant mobile linear system $\mathcal{M}\subset \big| -nK_V + lF \big|$. Suppose $\varphi:V\dasharrow \bar{V}$ is a $G$-equivariant birational map to a $G$-Fano fibration $\bar{\pi}:\bar{V}\to Z$ such that
\begin{align*}
\xymatrix
{ 
	V\ar@{-->}[r]^\varphi \ar[d]_\pi & \ar[d]^{\bar{\pi}} \bar{V} \\
	Z\ar@{=}[r]^g & Z
}
\end{align*}
is commutative. Suppose $\varphi$ is an isomorphism outside $F$, then $\varphi$ is an isomorphism.
\end{Th}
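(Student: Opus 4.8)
The plan is to argue by contradiction: assuming $\varphi$ is not an isomorphism, I will produce a $G$-equivariant mobile linear system in $|-nK_V+lF|$ whose associated pair fails to be canonical along $F$, contradicting the hypothesis. First I set up the test system. On $\bar V$ take the $G$-invariant mobile linear system $\bar{\mathcal{M}}\subset|-n\bar K_{\bar V}|$ cut out by a sufficiently divisible multiple of the relative anticanonical class; it is $G$-invariant because $\bar K_{\bar V}$ and the fibre class are, and mobile because $-\bar K_{\bar V}$ is $\bar\pi$-ample. Let $\mathcal{M}=\varphi^{-1}_*\bar{\mathcal{M}}$ be its strict transform on $V$. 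Since the diagram commutes with $g=\mathrm{id}$ and $\varphi$ restricts to an isomorphism of generic fibres, the restrictions of $\mathcal{M}$ and $-nK_V$ to the generic fibre are linearly equivalent; as $\rho^G(V/Z)=\rho^G(\bar V/Z)=1$, the $G$-invariant vertical divisors are generated by $F$, so $\mathcal{M}\subset|-nK_V+lF|$ for some $l\in\MZ$, and $\mathcal{M}$ is $G$-invariant and mobile. This is precisely a system to which the hypothesis applies.

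Next I localize over the bad point $z_0=\pi(F)$ and record discrepancies. Because $\varphi$ is an isomorphism over $V\setminus F$, I choose a $G$-equivariant common resolution $p\colon W\to V$, $q\colon W\to\bar V$ of $\varphi$ that is an isomorphism over $Z\setminus\{z_0\}$; then every $p$- or $q$-exceptional divisor lies over $z_0$, and so has centre on $V$ contained in $F$. Commutativity gives $\pi\circ p=\bar\pi\circ q$, whence $p^*F=q^*\bar F=:\Phi$. Writing the two crepant expansions of $K_W+\tfrac1n\widetilde{\mathcal{M}}$ for the common strict transform $\widetilde{\mathcal{M}}$, and using $K_{\bar V}+\tfrac1n\bar{\mathcal{M}}\sim 0$ together with $K_V+\tfrac1n\mathcal{M}\sim\tfrac1n lF$, I obtain
\begin{align*}
\sum_{E\ q\text{-exc}} a\big(E,\bar V,\tfrac1n\bar{\mathcal{M}}\big)\,E \;-\; \sum_{E\ p\text{-exc}} a\big(E,V,\tfrac1n\mathcal{M}\big)\,E \;\sim\; \tfrac ln\,\Phi .
\end{align*}

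The crux is a Noether--Fano-type dichotomy extracted from this identity: if $\varphi$ is not an isomorphism then either the pair $(V,\tfrac1n\mathcal{M})$ is non-canonical with centre over $z_0$, or $\varphi$ is an isomorphism in codimension one. In the first case the method of maximal singularities --- comparing the multiplicities $\nu_E(\mathcal{M})$ with the discrepancies $a(E,V,0)$ and using the $\bar\pi$-ampleness of $-\bar K_{\bar V}$ to control $l$ --- produces a divisorial valuation $E$ centred in $F$ with $a(E,V,\tfrac1n\mathcal{M})<0$, which contradicts the hypothesis that $(V,\tfrac1n\mathcal{M})$ is canonical at every subset of $F$. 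In the second case no discrepancy is negative, and here I would invoke $\rho^G(V/Z)=\rho^G(\bar V/Z)=1$ to exclude $G$-equivariant small (flopping) modifications over $z_0$, thereby upgrading a codimension-one isomorphism to a genuine isomorphism.

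I expect the main obstacle to be exactly this analysis. The delicate points are (i) controlling the fibre coefficient $l$, i.e. showing that the discrepancy defect of a non-isomorphism cannot be absorbed harmlessly into the vertical term $\tfrac ln\Phi$ but must surface as strict non-canonicity with centre in $F$; and (ii) the borderline codimension-one case, where the equivariant Picard-rank hypothesis is the only tool available to rule out flopping links over $z_0$. Both are genuine; everything else (the reduction to the test system, the choice of a $G$-equivariant resolution isomorphic away from $z_0$, and the crepant bookkeeping) is routine, provided $W$ and all auxiliary systems are kept $G$-equivariant, which should be stated explicitly.
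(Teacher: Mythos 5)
Your reduction to a test system is essentially sound (modulo the point that $\big|-nK_{\bar V}\big|$ by itself need not be free or even mobile, since $-K_{\bar V}$ is only relatively ample: you must add $\bar\pi^*(nD)$ for an ample $G$-invariant divisor $D$ on $Z$, which is harmless because it only shifts the vertical term). The genuine gap is that the two steps you defer as ``delicate points'' are the entire content of the theorem, and the second one is set up incorrectly. First, the dichotomy ``non-canonical centre in $F$, or isomorphism in codimension one'' does not follow from the crepant identity you write down: to run Noether--Fano here you must also control the sign of $l$, i.e.\ the nefness of $K_V+\frac{1}{n}\mathcal{M}$, and you give no argument for this. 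Second, and more seriously, your proposed resolution of the codimension-one case cannot work as stated: $\rho^G(V/Z)=\rho^G(\bar V/Z)=1$ does not ``exclude $G$-equivariant flopping modifications over $z_0$,'' because small modifications do not change the ($G$-invariant) Picard rank at all. That hypothesis is simply not the tool that rules out flops, and no substitute argument is supplied.

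The paper disposes of both problems simultaneously by a different mechanism: uniqueness of the canonical model. It takes base-point-free systems $\Lambda=\big|-nK_V+\pi^*(nD)\big|$ on $V$ and $\Gamma=\big|-nK_{\bar V}+\bar\pi^*(nD)\big|$ on $\bar V$, and forms the $G$-invariant boundaries $M_V=\frac{2\varepsilon}{n}\Lambda+\frac{1-\varepsilon}{n}\varphi^{-1}(\Gamma)$ and $M_{\bar V}=\frac{2\varepsilon}{n}\varphi(\Lambda)+\frac{1-\varepsilon}{n}\Gamma$, chosen so that $K_V+M_V$ and $K_{\bar V}+M_{\bar V}$ are ample and correspond under $\varphi$. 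The pair $(\bar V,M_{\bar V})$ is canonical because $\Gamma$ is free and $\varepsilon$ is small; the pair $(V,M_V)$ is canonical because $\Lambda$ is free, the hypothesis of the theorem applies to $\varphi^{-1}(\Gamma)\subset\big|-nK_V+lF\big|$ along $F$, and canonicity away from $F$ is automatic since $\varphi$ is an isomorphism there. Two crepant-birational canonical pairs with ample log canonical class are isomorphic, which kills divisorial contractions and flops in one stroke. To salvage your route you should replace the $\rho^G$ step by exactly this ampleness-plus-canonicity argument; as written, the proof is incomplete.
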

\begin{proof}
Let $D$ be a very ample $G$-invariant divisor on $Z$ such that $-K_{V}+\pi^{*}(D)$ and $-K_{\bar{V}}+\bar{\pi}^{*}(D)$ are ample. 
Put
$$
\Lambda=\big|-nK_{V}+\pi^{*}(nD)\big|,\ \Gamma=\big|-nK_{\bar{V}}+\bar{\pi}^{*}(nD)\big|,\ \bar{\Lambda}=\varphi(\Lambda),\ \bar{\Gamma}=\varphi^{-1}(\Gamma),%
$$
where $n$ is a natural number such that $\Lambda$ and $\Gamma$ have no base points.
Put
$$
M_{V}={\frac{2\varepsilon}{n}}\,\Lambda+{\frac{1-\varepsilon}{n}}\,\overline{\Gamma},\ M_{\bar{V}}={\frac{2\varepsilon}{n}}\,\bar{\Lambda}+{\frac{1-\varepsilon}{n}}\,\Gamma,%
$$
where $\varepsilon$ is a positive rational number. Note that these systems are $G$-invariant and for $\varepsilon$ small enough the linear systems $\big| K_{V}+M_{V} \big|$ and $\big|K_{\bar{V}}+M_{\bar{V}}\big|$ are ample. If the singularities of both log pairs $(V,M_{V})$ and $(V, M_{\bar{V}})$ are canonical, then $\varphi$ is an isomorphism by the uniqueness of canonical model \cite[Theorem~1.3.20]{Cheltsov-Fano}.

The linear system $\Gamma$ does not have base points, therefore for small enough $\varepsilon$ the pair $(\bar{V},M_{\bar{V}})$ is canonical. By the assumption of the theorem the pair $(V,{\frac{1}{n}}\bar{\Gamma})$ is canonical, thus $(V,{\frac{1-\varepsilon}{n}}\bar{\Gamma})$ is canonical. Therefore $(V,M_{V})$ is also canonical since $\Lambda$ has no base points.
\end{proof}

\begin{Lemma}[{\cite[Lemma~4]{Popov}}]
Suppose $G$ acts freely on a variety $X$ and $P$ is a $G$-invariant point. Then $G$ acts faithfully on $T_P X$.
\end{Lemma}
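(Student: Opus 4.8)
The plan is to reduce the statement to an assertion about a single group element and to compare the action on $X$ with the induced action on the tangent space. Write $\rho\colon G\to\operatorname{Aut}(X)$ for the given action and $\tau\colon G\to GL(T_P X)$ for the induced linear representation on the tangent space at the fixed point $P$. A free action is in particular faithful, so $\ker\rho$ is trivial; hence it suffices to prove the containment $\ker\tau\subseteq\ker\rho$, which then forces $\ker\tau$ to be trivial as well. So I would fix $g\in G$ acting trivially on $T_P X$ and aim to show that $\rho(g)=\mathrm{id}_X$.

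First I would pass to the completed local ring $A=\widehat{\mathcal{O}}_{X,P}$, a complete Noetherian local $\MC$-algebra with maximal ideal $\mathfrak m$ (abusing notation for $\mathfrak m A$) and residue field $\MC$, on which $g$ acts by a continuous $\MC$-algebra automorphism $g^*$. Triviality of $\tau(g)$ says exactly that $g^*$ induces the identity on the cotangent space $\mathfrak m/\mathfrak m^2$. Choosing $x_1,\dots,x_N\in\mathfrak m$ lifting a $\MC$-basis of $\mathfrak m/\mathfrak m^2$, these generate $\mathfrak m$. The heart of the argument is an averaging step: since $G$ is finite, $g$ has finite order $m$, and in characteristic zero I may form the invariants $\bar x_i=\frac{1}{m}\sum_{k=0}^{m-1}(g^*)^k x_i$. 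Because $g^*$ is the identity modulo $\mathfrak m^2$, each $\bar x_i\equiv x_i\pmod{\mathfrak m^2}$, so the $\bar x_i$ again generate $\mathfrak m$; as $A$ is complete with residue field $\MC$, every element of $A$ is a convergent power series in the $\bar x_i$ with coefficients in $\MC$. Since $g^*$ fixes $\MC$ and each $\bar x_i$ and is continuous, it follows that $g^*=\mathrm{id}$ on all of $A$.

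It then remains to propagate triviality on the completion back to $X$. The local ring $\mathcal{O}_{X,P}$ injects into its completion $A$ by the Krull intersection theorem, and this inclusion is $g^*$-equivariant because completion is functorial and $g^*$ preserves $\mathfrak m$; hence $g^*=\mathrm{id}$ on $\mathcal{O}_{X,P}$, and therefore on the function field $\MC(X)=\operatorname{Frac}\mathcal{O}_{X,P}$. An automorphism of an irreducible variety inducing the identity on its function field is the identity rational map, and being biregular it equals $\mathrm{id}_X$. Thus $\rho(g)=\mathrm{id}_X$, giving $\ker\tau\subseteq\ker\rho$ and completing the proof.

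The step I expect to be the main obstacle — and the only place the hypotheses are genuinely used — is the averaging over $\langle g\rangle$. This is precisely where finiteness of $G$ and characteristic zero are indispensable: without them the conclusion is false, as a unipotent one-parameter action such as $(x,y)\mapsto(x,y+tx^2)$ on $\MA^2$ fixes the origin and acts trivially on the tangent space there while moving $X$. Everything else in the argument — injectivity of a Noetherian local ring into its completion, and the passage from triviality on $\MC(X)$ to triviality on $X$ — is formal.
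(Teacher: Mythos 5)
Your proof is correct. There is nothing internal to compare it against: the paper states Lemma 3.7 as a quotation of \cite[Lemma~4]{Popov} and supplies no proof, and what you have written is precisely the classical linearization argument on which that lemma rests (pass to $\widehat{\mathcal O}_{X,P}$, average the generators of $\mathfrak m$ over $\langle g\rangle$, conclude $g^*=\mathrm{id}$ on the completion, descend to $\mathcal O_{X,P}$ and to $\MC(X)$). Two places where you silently, and correctly, repaired the statement deserve to be made explicit. First, ``acts freely'' must be read as ``acts faithfully'': a free action with an invariant point forces $G$ to be trivial, and your reduction $\ker\tau\subseteq\ker\rho$ is the right formalization. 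Second, finiteness of $G$ (or at least of the order of the element $g$ in question) is nowhere stated in the lemma but is indispensable for the averaging step, exactly as your unipotent example $(x,y)\mapsto(x,y+tx^2)$ shows; in the paper's only application (Corollary 3.8) the group is a stabilizer inside a finite group, so the omission is harmless. The remaining ingredients --- Nakayama to see that the averaged elements $\bar x_i$ still generate $\mathfrak m$, the surjection $\MC[[T_1,\dots,T_N]]\twoheadrightarrow\widehat{\mathcal O}_{X,P}$ from completeness, Krull intersection for the injection $\mathcal O_{X,P}\hookrightarrow\widehat{\mathcal O}_{X,P}$, and the passage from triviality on $\MC(X)$ to triviality on $X$ --- are all sound. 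Since you work with the Zariski tangent space $(\mathfrak m/\mathfrak m^2)^*$ throughout, your argument does not require $P$ to be a smooth point, which is mildly more general than the form in which this lemma is often quoted (the paper's Corollary 3.8(i) restricts to orbits of nonsingular points only because it needs $\dim T_PS=2$, not because the faithfulness statement fails otherwise).
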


\begin{Cor}
The following assertions hold.
\begin{enumerate}[(i)]
	\item Suppose $\mathcal{A}_6$ acts on a surface $S$ and let $\Sigma$ be an orbit of a nonsingular point, then $\big| \Sigma \big|\geqslant 10$.
	\item Let $\Sigma$ be a $\ON{PSL}_2(7)$-orbit on $\MP^2$, then $\big| \Sigma \big|\geqslant 14$.
\end{enumerate}
\end{Cor}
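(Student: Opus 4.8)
The plan is to read the statement through the orbit--stabilizer correspondence and then reduce everything to a representation-theoretic bound on point stabilizers. For a point $P\in\Sigma$ with stabilizer $G_P$ one has $|\Sigma|=[G:G_P]=|G|/|G_P|$, so a \emph{lower} bound on $|\Sigma|$ is the same as an \emph{upper} bound on $|G_P|$. The geometric input is Lemma 2.17 (Popov): the stabilizer $G_P$ acts faithfully on the ambient variety and fixes the nonsingular point $P$, so it acts faithfully on the tangent space $T_P S\cong\MC^2$; in case (ii) the point $P\in\MP^2$ is automatically smooth. Hence $G_P$ embeds into $\ON{GL}_2(\MC)$, i.e. $G_P$ admits a \emph{faithful two-dimensional representation}. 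The whole problem thus becomes purely group-theoretic: how large can a subgroup of $\mathcal{A}_6$ (resp. $\ON{PSL}_2(7)$) be if it has a faithful two-dimensional representation?

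For (i) I would recall that the maximal subgroups of $\mathcal{A}_6$ are the two classes of $\mathcal{A}_5$ (order $60$), the two classes of $\mathcal{S}_4$ (order $24$), and the group $3^2{:}4$ (order $36$). Consequently the only subgroups of order exceeding $36$ are $\mathcal{A}_6$ itself and the copies of $\mathcal{A}_5$. Both are perfect and have no two-dimensional irreducible representation (the nontrivial irreducible degrees of $\mathcal{A}_5$ are $3,3,4,5$, and of $\mathcal{A}_6$ are $5,5,8,8,9,10$); since a perfect group has only the trivial one-dimensional representation, every two-dimensional representation of such a group is trivial, hence not faithful. Therefore $G_P$ can be neither $\mathcal{A}_5$ nor $\mathcal{A}_6$, so $|G_P|\leqslant 36$ and $|\Sigma|=360/|G_P|\geqslant 10$.

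For (ii) the same scheme applies with $|\ON{PSL}_2(7)|=168$ and target bound $|G_P|\leqslant 12$. The maximal subgroups are the two classes of $\mathcal{S}_4$ (order $24$) and the Frobenius group $F_{21}=7{:}3$ (order $21$), so every proper subgroup has order dividing $24$ or $21$, and the only subgroup orders larger than $12$ are $21$ and $24$. Neither $\mathcal{S}_4$ nor $F_{21}$ has a faithful two-dimensional representation: for $F_{21}$ the irreducible degrees are $1,1,1,3,3$, so any two-dimensional representation is a sum of one-dimensional ones and is trivial on the commutator subgroup $\MZ/7$; for $\mathcal{S}_4$ the irreducible degrees are $1,1,2,3,3$, and neither the $2$-dimensional irreducible (which factors through $\mathcal{S}_4\twoheadrightarrow\mathcal{S}_3$, with kernel the Klein four-group) nor a sum of one-dimensional representations is faithful. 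Thus $|G_P|\leqslant 12$ and $|\Sigma|=168/|G_P|\geqslant 14$.

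The routine part is the bookkeeping of maximal subgroups and irreducible degrees, which is read off the character tables. The one point that must be handled carefully is the application of Lemma 2.17: one must ensure the $G$-action is faithful and that $P$ is a genuine $G_P$-fixed point, so that a nontrivial element acting trivially on $T_P$ would act trivially near $P$ and hence everywhere. Once this is in place the representation-theoretic bounds close the argument; I expect no serious obstacle beyond correctly excluding the single borderline stabilizer in each case ($\mathcal{A}_5\subset\mathcal{A}_6$, which realizes the bound $10$, and $\mathcal{S}_4,F_{21}\subset\ON{PSL}_2(7)$, which realize the bound $14$).
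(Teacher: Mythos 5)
Your proof is correct and follows essentially the same route as the paper: orbit--stabilizer, then Popov's lemma (Lemma 3.7) to put a two-dimensional representation on the stabilizer of a smooth fixed point, then representation theory of the candidate subgroups of small index. The one genuine divergence is the exclusion of $\mathcal{S}_4\subset\ON{PSL}_2(7)$: because $\mathcal{S}_4$ \emph{does} have an irreducible two-dimensional representation, the paper's tangent-space criterion (phrased in terms of irreducibility) does not apply to it, and the paper instead observes that the restriction to $\mathcal{S}_4$ of the three-dimensional representation defining the $\ON{PSL}_2(7)$-action on $\MP^2$ is irreducible, so $\mathcal{S}_4$ has no invariant line and hence no fixed point. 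Your uniform criterion --- that none of $\mathcal{A}_5$, $F_{21}$, $\mathcal{S}_4$ admits a \emph{faithful} two-dimensional representation --- treats all cases at once and is arguably the more accurate reading of what Popov's lemma delivers (faithfulness, not irreducibility, of the tangent representation). Two negligible points: the full groups themselves ($\mathcal{A}_6$, $\ON{PSL}_2(7)$) should also be listed among the subgroups of order exceeding the bound, but they are excluded by exactly the same argument since they are perfect with no two-dimensional irreducible representation; and in case (i) one should note that a nontrivial action of the simple group $\mathcal{A}_6$ is automatically faithful, which you do flag.
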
 
\begin{proof}
Let $H$ be the stabilizer of $P\in \Sigma$. Then by Lemma 3.7 there is an induced irreducible representation of $H$ on $T_P S\cong\MC^2$. The subgroups of $\mathcal{A}_6$ of index $<10$ are isomorphic to $\mathcal{A}_5$ \cite{GAP4} but $\mathcal{A}_5$ does not have irreducible $2$-dimensional representations, thus $(i)$ holds. 

The subgroups of $\ON{PSL}_2(7)$ of index $<14$ are $F_{21}$ and $\mathcal{S}_4$ (\cite{GAP4}). The group $F_{21}$ does not have irreducible $2$-dimensional representations. On the other hand, the induced action of $S_4$ on $\MP^2$ is irreducible (\cite{GAP4}). Thus $S_4$ cannot be a stabilizer and $(ii)$ holds.
\end{proof}

\begin{Lemma}
Let $r$ be a rational number.
\begin{enumerate}[(i)]
	\item Suppose $C\in \big| -rK_{\MP^2} \big|$ is an $\mathcal{A}_6$-invariant curve on $\MP^2$, then $r\geqslant 2$.
	\item Suppose $C\in \big| -rK_{\MP^2} \big|$ is a $\ON{PSL}_2(7)$-invariant curve on $\MP^2$, then $r\geqslant \frac{4}{3}$.
	\item Suppose $C\in \big| -rK_{S_2} \big|$ is a $\ON{PSL}_2(7)$-invariant curve on $S_2$, then $r\geqslant 2$.
	\item Let $S\in\MP(1_x,1_y,1_z,2_w)$ be a surface given by the equation $x^3y+y^3z+z^3x=0$. Suppose $C\in \big| -rK_{S} \big|$ is a $\ON{PSL}_2(7)$-invariant curve on $S$, then $r\geqslant 2$.
\end{enumerate}
\end{Lemma}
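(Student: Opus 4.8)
The plan is to reduce all four statements to a single representation-theoretic computation. For a surface $S$ carrying a $G$-action, a $G$-invariant curve $C\in|-rK_S|$ is exactly a $G$-fixed point of the induced action on $\MP\big(H^0(S,\mathcal{O}_S(-rK_S))\big)$, hence corresponds to a nonzero $G$-semi-invariant section. Both relevant groups $\mathcal{A}_6$ and $\ON{PSL}_2(7)$ are simple, hence perfect, so they admit no nontrivial one-dimensional characters and every semi-invariant is in fact invariant. Thus such a $C$ exists if and only if $H^0(S,\mathcal{O}_S(-rK_S))^G\neq 0$, and the lemma becomes the statement that the least degree carrying a nonzero invariant is large enough. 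I would first record the numerical dictionary: $-K_{\MP^2}=\mathcal{O}(3)$, so a curve in $|-rK_{\MP^2}|$ has degree $3r$ with $3r\in\MZ_{>0}$; while for $S_2=\{w^2=q_4\}$ and for the cone $S=\{q_4=0\}$ in $\MP(1_x,1_y,1_z,2_w)$ adjunction gives $-K_{S_2}=-K_S=\mathcal{O}(1)$, and $r\in\MZ_{>0}$ since the $G$-invariant class group is generated by $-K$.

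For the two $\MP^2$ cases I would use $H^0(\MP^2,\mathcal{O}(d))=\ON{Sym}^dV^*$, where $V$ is the three-dimensional representation defining the action, and find the least $d$ with $(\ON{Sym}^dV^*)^G\neq0$. For $\ON{PSL}_2(7)$ the degrees $d=1,2,3$ are excluded geometrically: $d=1$ by irreducibility of $V$; $d=2$ because a smooth invariant conic would give an embedding $G\hookrightarrow\ON{Aut}(\text{conic})\cong\ON{PGL}_2(\MC)$, impossible as finite subgroups of $\ON{PGL}_2(\MC)$ have order at most $60<168$, while a degenerate invariant conic yields an invariant line or point against irreducibility of $V$ (equivalently, $V\not\cong V^*$, so there is no invariant quadratic form at all); and $d=3$ because a smooth invariant cubic would embed the simple nonabelian $G$ into the solvable automorphism group of a genus-one curve, while a singular or reducible invariant cubic forces a $G$-fixed point or a $G$-invariant line. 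The first invariant is the Klein quartic $q_4$ in degree $4$, so $3r\geqslant4$ and $r\geqslant\frac{4}{3}$. For $\mathcal{A}_6$ the same irreducibility, conic and elliptic-curve arguments kill $d=1,2,3$, the Hurwitz bound $84(g-1)=168$ excludes a smooth invariant plane quartic since $g=3$ and $|\mathcal{A}_6|=360>168$, and the remaining degrees up to $5$ are handled by the classical description of the Valentiner--Wiman invariant ring, whose generators lie in degrees $6,12,30,45$. Hence the least invariant degree is $6$, so $3r\geqslant6$ and $r\geqslant2$.

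For $S_2$ and the cone $S$ the computation is immediate. No relation among $x,y,z,w$ occurs below degree $4$, so the degree-one graded piece restricts isomorphically to $H^0(S,\mathcal{O}(1))=\langle x,y,z\rangle\cong V$, which is irreducible and nontrivial; thus $H^0(S,\mathcal{O}(1))^G=0$ and there is no invariant curve in $|-K|$. Combined with $r\in\MZ_{>0}$ this gives $r\geqslant2$. The bound is sharp: the weight-two coordinate $w$ (corrected by a quadric in the cone case, since the $G$-action on $\MP(1,1,1,2)$ may mix $w$ with $\ON{Sym}^2\langle x,y,z\rangle$) is $G$-invariant because $w^2=q_4$ is invariant and the residual sign is a character of the perfect group $G$, so it produces an invariant curve with $r=2$.

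I expect the main obstacle to be the $\mathcal{A}_6$ case in degrees $4$ and $5$. The cheap tools — irreducibility, non-self-duality, the order bound for $\ON{PGL}_2(\MC)$, solvability of $\ON{Aut}$ of a genus-one curve, and the Hurwitz bound — dispose of every small degree for $\ON{PSL}_2(7)$ and most small degrees for $\mathcal{A}_6$, but establishing the absence of an invariant of degree $5$ genuinely requires either the classical invariant theory of the Valentiner group or a direct character sum $\frac{1}{|G|}\sum_g\chi_{\ON{Sym}^dV^*}(g)$ evaluated on the triple cover $3.\mathcal{A}_6$. That single step is computational rather than conceptual, and it is the one place where I would lean on a citation rather than a short argument.
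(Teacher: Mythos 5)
Parts (i)--(iii) of your argument are sound and close in spirit to the paper's. For (i) and (ii) the paper simply computes (via GAP) the least $k$ with a one-dimensional $\overline{G}$-invariant subspace in $\ON{Sym}^k(\MC^3)$, getting $6$ for $\mathcal{A}_6$ and $4$ for $\ON{PSL}_2(7)$; your geometric exclusions of low degrees plus the Wiman--Valentiner invariant degrees $6,12,30,45$ reach the same place with less reliance on computer algebra, at the cost of one classical citation. For (iii) your route ($\Pic^G(S_2)=\MZ\cdot(-K)$ forces $r\in\MZ$, and $H^0(S_2,-K_{S_2})\cong V$ has no invariants, so $r\neq 1$) is if anything cleaner than the paper's case split into pullbacks from $\MP^2$ versus the ramification divisor.

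Part (iv) has a genuine gap. The surface $S$ is a cone over the Klein quartic $\Gamma$ with a $\frac{1}{2}(1,1)$ vertex; it is not factorial, its class group is $\Cl(S)\cong\Pic(\Gamma)$ (not $\MZ$), and numerically $\ON{Num}(S)=\MZ$ is generated by the class of a ruling $L$ with $L\equiv-\frac{1}{4}K_S$. So your assertion that ``$r\in\MZ_{>0}$ since the $G$-invariant class group is generated by $-K$'' is unjustified and, read numerically (which is how the lemma is used in Lemma 3.12 and throughout Section 6), false for curves on $S$ in general: the statement is deliberately phrased for rational $r$ precisely because of curves like the rulings. The whole content of (iv) is to exclude invariant curves of small \emph{fractional} anticanonical degree, and your argument, which only inspects the Cartier classes $\mathcal{O}_S(k)$, never engages with this. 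The paper's proof does: it takes the invariant curve $C_0=\{w=0\}\in|-2K_S|$ (a copy of $\Gamma$ avoiding the vertex), notes that for any irreducible invariant $C\neq C_0$ the finite set $C\cap C_0$ is a union of $\ON{PSL}_2(7)$-orbits on $\Gamma$ whose point stabilizers are cyclic of order at most $7$, hence $|C\cap C_0|\geqslant 24$, and concludes $4r=C\cdot C_0\geqslant 24$, so $r\geqslant 6$ unless $C=C_0$, where $r=2$. Some such intersection-theoretic (or orbit-counting) input is unavoidable here; to repair your write-up you would need either this argument or an honest determination of which fractional classes in $\ON{Num}(S)$ admit $G$-invariant effective representatives.
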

\begin{proof}
The action of $G$ on $\MP^2$ is induced by a $3$-dimensional representation of a central extension $\overline{G}$ of $G$. This representation induces representation of $\overline{G}$ on polynomials of degree $k$ as $\ON{Sym}^k(\MC^3)$. Therefore every $G$-invariant curve of degree $k$ corresponds to a $1$-dimensional $\overline{G}$-invariant subspace of $\ON{Sym}^k (\MC^3)$. The minimal $k$ such that $\ON{Sym}^k(\MC^3)$ has $1$-dimensional $\overline{G}$-invariant representations is $6$ for $\mathcal{A}_6$ and $4$ for $\ON{PSL}_2(7)$ \cite{GAP4}, this proves $(i)$ and $(ii)$

Suppose $C\in \big| -rK_{S_2} \big|$ is a  $\ON{PSL}_2(7)$-invariant curve on $S_2$. Without loss of generality we may assume that $C$ is $\ON{PSL}_2(7)$-irreducible. Consider the double cover $\pi: S_2\to \MP^2$ branched over Klein quartic $C_4$. 
Since $\pi$ is a canonical morphism, it is $\ON{PSL}_2(7)$-equivariant. Invariant Picard group $\ON{Pic}^G(S)=\MZ$, therefore $C$ is a pullback of a curve from $\MP^2$ or $C$ is a ramification divisor. In the first case $r\geqslant 4$, since $-K_{S_2}$ is a pullback of a line and the $\ON{PSL}_2(7)$-invariant curve of the lowest degree is a quartic. Clearly, in the latter case $r=2$.

There is a $G$-invariant curve $C_0\in \big| -2K_{S} \big|$ given by the equation $w=0$, note that $C_0$ does not pass through a singular point and $C_0$ is isomorphic to the Klein quartic. Suppose $C\neq C_0$, then $C\cap C_0$ is a union of orbits on $C_0$. Let $\Sigma\subset C\cap C_0$ be an orbit and let $H\subset \ON{PSL}_2(7)$ be a stabilizer of $\Sigma$. Then by Lemma 3.7 the group $H$ is cyclic. The cyclic subgroup of $\ON{PSL}_2(7)$ of maximal size has $7$ elements \cite{GAP4}, therefore $\big| \Sigma \big|\geqslant 24$. On the other hand $\big| C\cap C_0 \big|\leqslant C\cdot C_0=4r$, thus $4r\geqslant 24$. Hence $r\geqslant 6$ unless $C=C_0$, in which case $r=2$.
\end{proof}

\begin{Lemma}
Let $\mathcal{M}\subset \big|-rK_{\MP^2} \big|$ be a G-invariant linear system on $\MP^2$. Suppose $G$ is $\mathcal{A}_6$ or $\ON{PSL}_2(7)$ then the pair $(\MP^2,\frac{1}{r}\mathcal{M})$ is log canonical.
\end{Lemma}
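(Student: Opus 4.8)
The plan is to argue by contradiction, using the orbit-size estimates of Corollary 3.9 together with a Bezout-type multiplicity bound. Suppose the pair $(\MP^2,\frac1r\mathcal{M})$ is not log canonical, and let $M$ be a general member of $\mathcal{M}$, so that $D:=\frac1r M\equiv -K_{\MP^2}$ has degree $3$, i.e. $\deg M=3r$. The non-log-canonical locus $Z$ of $(\MP^2,\frac1r\mathcal{M})$ is a non-empty closed subset, and since $\mathcal{M}$ and hence the pair are $G$-invariant, $Z$ is $G$-invariant. The goal is to show that $Z$ would have to contain a $G$-orbit of points that is strictly too small to exist.

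First I would reduce to the case that $Z$ is finite. A one-dimensional component of $Z$ would be a curve $C$ with $\mult_C\frac1r M>1$, i.e. a fixed component of $\mathcal{M}$ entering $M$ with coefficient $>r$; as $G$ permutes such components with equal coefficient, a whole $G$-orbit of them occurs in $M$, and comparing degrees forces each such $C$ to have degree $<3$ and their $G$-invariant sum to be an invariant curve of degree $\leqslant 2$. This is impossible by Lemma 3.10, which bounds the degree of a $G$-invariant curve below by $4$ (even by $6$ for $\mathcal{A}_6$). In particular, when $\mathcal{M}$ is mobile this step is vacuous, since then $M$ has no fixed component. Hence $Z$ is a finite $G$-invariant set.

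Now pick $P\in Z$ and set $\Sigma=G\cdot P\subset Z$ and $m=\mult_P\mathcal{M}$; by $G$-invariance $\mult_Q\mathcal{M}=m$ for every $Q\in\Sigma$. Since $\MP^2$ is smooth and $(\MP^2,\frac1r M)$ fails to be log canonical at $P$, the comparison $\ON{lct}_P\geqslant 1/\mult_P$ on a smooth surface forces $\mult_P\frac1r M>1$, that is $m>r$. Taking two general members $M_1,M_2\in\mathcal{M}$ (whose intersection is proper once the fixed part is stripped, so that Bezout applies), I would estimate
\begin{align*}
9r^2=\deg M_1\cdot\deg M_2=M_1\cdot M_2\geqslant\sum_{Q\in\Sigma}\mult_Q M_1\cdot\mult_Q M_2=|\Sigma|\,m^2>|\Sigma|\,r^2,
\end{align*}
whence $|\Sigma|<9$. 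Since every point of $\MP^2$ is nonsingular, this contradicts Corollary 3.9, which gives $|\Sigma|\geqslant 10$ for $G=\mathcal{A}_6$ and $|\Sigma|\geqslant 14$ for $G=\ON{PSL}_2(7)$. Therefore $Z=\varnothing$ and the pair is log canonical.

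The main obstacle is the passage from ``not log canonical at $P$'' to the clean multiplicity bound $m>r$: this rests on the fact that on a smooth surface $\mult_P\frac1r M\leqslant 1$ already guarantees log canonicity at $P$ (equivalently $\ON{lct}_P(\MP^2,M)\geqslant 1/\mult_P M$). Granting this, the argument is entirely driven by the numerical collision $9<10\leqslant|\Sigma|$, so all the real content is loaded into Corollary 3.9; the only other care-point is ensuring that $M_1\cdot M_2$ is a proper intersection before invoking Bezout, which is why the fixed-part analysis of the second step is needed.
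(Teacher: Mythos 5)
Your core argument is the paper's: a curve in the non-log-canonical locus is ruled out because a $G$-invariant curve on $\MP^2$ has degree at least $4$ (this is Lemma 3.9 of the paper, not 3.10), and a point is ruled out because the multiplicity bound $\mult_P\mathcal{M}>r$ fed into Bezout forces the orbit of $P$ to have fewer than $9$ points, against the lower bound $|\Sigma|\geqslant 10$ (resp.\ $14$) of Corollary 3.8 (which you cite as 3.9). So the key ingredients and the numerical collision $9<10$ are exactly those of the paper.

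There is, however, a gap in your reduction to the case where Bezout applies. Your first step only excludes fixed components of $\mathcal{M}$ that enter a general member with coefficient $>r$; it does not make $\mathcal{M}$ mobile. If $\mathcal{M}=F+\mathcal{M}'$ with a nonzero fixed part $F\in\big|-r_1K_{\MP^2}\big|$ whose coefficients are $\leqslant r$, then $M_1$ and $M_2$ share the components of $F$, the intersection $M_1\cdot M_2$ is not a zero-cycle, and the displayed chain $9r^2=M_1\cdot M_2\geqslant\sum_Q(\mult_Q M_1)(\mult_Q M_2)$ is not valid. Your parenthetical fix, ``strip the fixed part,'' changes both sides of the inequality: the degree drops from $3r$ to $3r_2$, and, more seriously, the hypothesis $\mult_P M>r$ does not yield $\mult_P\mathcal{M}'>r_2$, since the excess multiplicity at $P$ may be carried entirely by $F$. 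The paper sidesteps this by writing $\frac{1}{r}\mathcal{M}$ as the convex combination $\frac{r_1}{r}\cdot\frac{1}{r_1}F+\frac{r_2}{r}\cdot\frac{1}{r_2}\mathcal{M}'$ and checking log canonicity of the two pieces separately: the Bezout--orbit argument is run on the mobile pair $\big(\MP^2,\frac{1}{r_2}\mathcal{M}'\big)$, where failure of log canonicity at $P$ gives the correct inequality $\mult_P\mathcal{M}'>r_2$ and the intersection of two general members is proper, while the fixed piece $\big(\MP^2,\frac{1}{r_1}F\big)$ is handled by the invariant-curve bound. You should restructure your argument along these lines (or otherwise control $\mult_P F$); as written, your point case is only proved when $\mathcal{M}$ has no fixed part.
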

\begin{proof}
Let $\mathcal{M}=F+\mathcal{M}^\prime$, where $F\in \big|-r_1K_{\MP^2} \big| $ is a fixed part and $\mathcal{M}^\prime \subset \big|-r_2K_{\MP^2} \big|$ is a mobile linear system. If both $(X,\frac{1}{r_1}F)$ and $(X,\frac{1}{r_2}\mathcal{M}^\prime)$ are log canonical, then $(\MP^2,\frac{1}{r}\mathcal{M})$ is log canonical.

Suppose the pair $(\MP^2,\frac{1}{r_2}\mathcal{M}^\prime)$ is not log canonical. It is log canonical at curves since it is mobile, hence it is not log canonical at some point $P$. Then it is not log canonical at the orbit $\Sigma$ of $P$. Consider generic divisors $D_1,D_2\in \mathcal{M}^\prime$, clearly $\mult_\Sigma D_i> r_2$. By Corollary 3.8
\begin{align*}
10r_2^2 \leqslant r_2^2 \big| \Sigma \big| < D_1\cdot D_2 = 9r_2^2,
\end{align*}
contradiction.

The pair $(X,\frac{1}{r_1}F)$ is log canonical by Lemma 3.9. 
\end{proof}

\begin{Lemma}
Let $S$ be a del Pezzo surface of degree $2$ with a $\ON{PSL}_2(7)$-action. Let $\mathcal{M}\subset \big| -rK_{S} \big|$ be a $\ON{PSL}_2(7)$-invariant linear system on $S$. Then the pair $(\MP^2,\frac{1}{r}\mathcal{M})$ is log canonical.
\end{Lemma}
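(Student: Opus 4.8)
The plan is to follow the template of the previous lemma, splitting $\mathcal{M}$ into its fixed and mobile parts and treating them separately. By Theorem 3.2 a degree $2$ del Pezzo surface carrying a $\ON{PSL}_2(7)$-action is the double cover $r\colon S_2\to\MP^2$ branched over the Klein quartic, so $S\cong S_2$ is smooth with $K_S^2=2$. I would write $\mathcal{M}=\mathcal{F}+\mathcal{M}'$ with $\mathcal{F}\in\big|-aK_S\big|$ the fixed part and $\mathcal{M}'\subset\big|-bK_S\big|$ mobile, where $a+b=r$; both are $\ON{PSL}_2(7)$-invariant. Since $\frac{1}{r}\mathcal{M}$ is the convex combination $\frac{a}{r}\cdot\frac{1}{a}\mathcal{F}+\frac{b}{r}\cdot\frac{1}{b}\mathcal{M}'$, it suffices to prove that $(S,\frac{1}{a}\mathcal{F})$ and $(S,\frac{1}{b}\mathcal{M}')$ are log canonical.

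For the mobile part I would argue exactly as in Lemma 3.11, but now the small value $K_S^2=2$ does all the work. As $\mathcal{M}'$ is mobile the pair $(S,\frac{1}{b}\mathcal{M}')$ is log canonical in codimension one, so if it fails to be log canonical it does so at a point $P$, hence along the whole orbit $\Sigma$ of $P$, and $\mult_Q\mathcal{M}'>b$ for every $Q\in\Sigma$ (not being log canonical it is not canonical, and for a mobile system non-canonicity at a smooth point forces the multiplicity to exceed $b$). Taking general $D_1,D_2\in\mathcal{M}'$, which meet properly, gives
\begin{align*}
2b^2=b^2K_S^2=D_1\cdot D_2\geqslant\sum_{Q\in\Sigma}\mult_Q D_1\cdot\mult_Q D_2=\sum_{Q\in\Sigma}(\mult_Q\mathcal{M}')^2>|\Sigma|\,b^2,
\end{align*}
so $|\Sigma|<2$ and $P$ is a $\ON{PSL}_2(7)$-fixed point. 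This is impossible: by Lemma 3.7 the group would then act faithfully on $T_PS\cong\MC^2$, whereas $\ON{PSL}_2(7)$ is simple and has no nontrivial representation of dimension less than $3$.

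The fixed part is the delicate step. First I would check that no component of $\frac{1}{a}\mathcal{F}$ has coefficient exceeding $1$: if a component $C$ occurred with multiplicity $m>a$, then, $\mathcal{F}$ being invariant, its whole orbit $O=\sum_\gamma\gamma C$ would occur with the same multiplicity, and $O$ is an invariant reduced curve, so $O\in\big|-cK_S\big|$ with $c\geqslant2$ by Lemma 3.9. Comparing anticanonical degrees in $m\,O\leqslant\mathcal{F}$ gives $2a\geqslant 2mc\geqslant 4m>4a$, a contradiction; the same computation shows every coefficient is at most $\frac{1}{2}$, so the pair is log canonical in codimension one. It then remains to bound multiplicities at points. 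If $(S,\frac{1}{a}\mathcal{F})$ failed to be log canonical at a point $P$, it would fail along its orbit $\Sigma$, whose image $r(\Sigma)$ is a $\ON{PSL}_2(7)$-orbit on $\MP^2$ of size at least $14$ by Corollary 3.8. Since $-K_S$ is base point free, I can choose $A=r^{-1}(\ell)\in\big|-K_S\big|$ for a line $\ell$ through two points of $r(\Sigma)$ that is irreducible and contains no component of $\mathcal{F}$, whence $2a=A\cdot\mathcal{F}\geqslant 2\mult_P\mathcal{F}$ and $\mult_P\mathcal{F}\leqslant a$. As $\mult_P\frac{1}{a}\mathcal{F}\leqslant1$ on the smooth surface $S$, the pair is in fact canonical, hence log canonical, at $P$.

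The main obstacle is this final point-estimate for the fixed part: one must produce an anticanonical curve through two points of the orbit that avoids the finitely many bad lines, namely the bitangents of the branch quartic (which give reducible members) and the lines carrying a component of $\mathcal{F}$, and this is precisely where the lower bound $|r(\Sigma)|\geqslant14$ from Corollary 3.8 is essential. The mobile part, by contrast, is immediate once one observes that $K_S^2=2$ already forces any hypothetical non-log-canonical orbit to be a single point, which the absence of $\ON{PSL}_2(7)$-fixed points then rules out.
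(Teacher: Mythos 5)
Your overall strategy is the one the paper intends: its proof of this lemma is literally ``analogous to the proof of Lemma 3.10'', i.e.\ split $\mathcal{M}$ into fixed and mobile parts and treat them separately, using the degree bounds of Lemma 3.9 for the fixed part and orbit sizes against intersection numbers for the mobile part. Your mobile part is correct and is exactly the degree-$2$ analogue of the paper's computation: $D_1\cdot D_2=2b^2$ forces $|\Sigma|<2$, and a $\ON{PSL}_2(7)$-fixed point is excluded via Lemma 3.7 since the group has no faithful two-dimensional representation. Your coefficient bound $m\leqslant a/2$ for the fixed part via Lemma 3.9 is also correct and mirrors the paper (which in fact stops there, leaving the point estimate for the fixed part implicit).

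The gap is precisely the step you flag as delicate. You need an anticanonical curve $A=r^{-1}(\ell)$ through two points of $\Sigma$ sharing no component with $\mathcal{F}$, but the existence of such an $\ell$ is asserted, not proved, and the obvious count does not close. The excluded lines are not only the $28$ bitangents: every line $\ell$ such that some component of $r^{-1}(\ell)$ lies in $\ON{Supp}\mathcal{F}$ is excluded, and if $\mathcal{F}$ contains an invariant orbit of pulled-back lines or of $(-1)$-curves, the number of excluded lines grows linearly with $a$; meanwhile the candidate lines are confined to the joins of pairs of points of $r(\Sigma)$ (at most $\binom{|r(\Sigma)|}{2}$ of them, fewer if orbit points are collinear). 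So the bound $|r(\Sigma)|\geqslant 14$ by itself does not produce $A$, and for large $a$ the counting can fail outright. A way to close this that stays within the paper's toolkit: write $\frac{1}{a}\mathcal{F}=\sum\frac{m_jc_j}{a}\cdot\frac{1}{c_j}O_j$ as a convex combination over the orbits of components $O_j\in\big|-c_jK_S\big|$ with $c_j\geqslant 2$; each $O_j$ is a reduced invariant curve with $p_a(O_j)=1+c_j(c_j-1)$ whose singular points lie in orbits of length at least $14$ (the stabilizer argument of Lemma 3.7 applies on $S_2$, not just on $\MP^2$), so $7\,m(m-1)\leqslant p_a(O_j)+\#\{\text{components}\}-1\leqslant c_j^2+c_j$ forces $\mult_P O_j\leqslant c_j$; hence $\mult_P\big(\tfrac{1}{c_j}O_j\big)\leqslant 1$, each summand is log canonical at $P$, and so is the convex combination. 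With that replacement your argument is complete.
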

\begin{proof}
The proof is analogous to the proof of Lemma 3.10.
\end{proof}

\begin{Lemma}
Let $S\subset\MP(1_x,1_y,1_z,2_w)$ be a surface given by the equation $x^3y+y^3z+z^3x=0$. Let $\mathcal{M}\subset \big |-rK_{S} \big|$ be a $\ON{PSL}_2(7)$-invariant linear system on $S$. Then the pair $(\MP^2,\frac{1}{r}\mathcal{M})$ is log canonical outside of the singular point of $S$.
\end{Lemma}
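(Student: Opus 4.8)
The plan is to follow the proof of Lemma 3.10 in structure, substituting $(-K_S)^2 = 2$ for $(-K_{\MP^2})^2 = 9$ and replacing the orbit estimate of Corollary 3.8 by a fixed-point analysis for the action on $S$. First I would split $\mathcal M = F + \mathcal M'$ into its fixed part $F \in \big| -r_1 K_S \big|$ and its mobile part $\mathcal M' \subset \big| -r_2 K_S \big|$, with $r = r_1 + r_2$. Since
\begin{align*}
\frac{1}{r}\mathcal M = \frac{r_1}{r}\cdot\frac{1}{r_1}F + \frac{r_2}{r}\cdot\frac{1}{r_2}\mathcal M'
\end{align*}
is a convex combination and discrepancies depend linearly on the boundary, it suffices to prove that each of $(S,\frac{1}{r_1}F)$ and $(S,\frac{1}{r_2}\mathcal M')$ is log canonical away from the singular point.

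For the mobile system the pair is automatically log canonical along curves, so any failure happens at a point $P$, and then, by $\ON{PSL}_2(7)$-invariance, along the whole orbit $\Sigma$ of $P$. Away from the vertex $S$ is smooth, so the standard surface estimate gives $\mult_P D_i > r_2$ for general $D_1, D_2 \in \mathcal M'$, whence $(D_1\cdot D_2)_P \geqslant \mult_P D_1\cdot\mult_P D_2 > r_2^2$ at every point of $\Sigma$. Summing the local intersection numbers and comparing with the global one gives
\begin{align*}
\big| \Sigma \big|\, r_2^2 < \sum_{P\in\Sigma}(D_1\cdot D_2)_P \leqslant D_1\cdot D_2 = r_2^2(-K_S)^2 = 2r_2^2,
\end{align*}
so $\big| \Sigma \big| < 2$ and $P$ must be a fixed point. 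But the projection $(x:y:z:w)\mapsto(x:y:z)$ is a $\ON{PSL}_2(7)$-equivariant morphism from $S$ minus its vertex onto the Klein quartic in $\MP^2$, and the $\ON{PSL}_2(7)$-action on $\MP^2$ is irreducible, hence fixed-point free; therefore the only fixed point on $S$ is the vertex. As $P$ lies away from the vertex, this is a contradiction, and the mobile part is log canonical there.

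For the fixed part I would bound the coefficients of $\frac{1}{r_1}F$. If an irreducible curve $C$ appears in $F$ with coefficient $a$, then by invariance its entire orbit appears with coefficient $a$, and the orbit sum $\Theta$ is a $\ON{PSL}_2(7)$-invariant curve, so $\Theta \in \big| -sK_S \big|$ with $s\geqslant 2$ by Lemma 3.9(iv). Intersecting $a\Theta \leqslant F$ with $-K_S$ yields $\frac{a}{r_1}\leqslant\frac{1}{s}\leqslant\frac{1}{2}$, with the value $\frac12$ attained only when $s=2$, i.e. $\Theta = C_0 = \{w = 0\}$, which is smooth (isomorphic to the Klein quartic) and disjoint from the vertex; every other component occurs with coefficient at most $\frac{1}{6}$ (again by Lemma 3.9(iv)). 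Since all coefficients are at most $\frac12$ and the only extremal component is smooth and avoids the vertex, $(S,\frac{1}{r_1}F)$ is log canonical away from the vertex, exactly as in Lemma 3.10.

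The main obstacle is precisely the fixed-point count underlying the mobile case: the argument converts non-log-canonicity into an orbit of size at least two, and everything hinges on the only $\ON{PSL}_2(7)$-fixed point of $S$ being the vertex. This is also what forces the ``outside the singular point'' restriction: at the vertex the orbit is a single point ($\big| \Sigma \big| = 1$ gives no contradiction), and since $S$ is singular there the smooth-surface multiplicity estimate $\mult_P D_i > r_2$ is unavailable, so the vertex genuinely requires the separate local analysis carried out later.
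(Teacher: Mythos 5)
Your argument is correct, but it is genuinely different from the one in the paper. The paper does not split $\mathcal{M}$ into fixed and mobile parts at all: it takes the ruling curve $L$ of the cone through the putative non--log-canonical point $P$, writes $\mathcal{M}=kL+\mathcal{M}'$ with $k\leqslant r$, and applies the local inequality of \cite[Theorem~7]{Tigers} to the pair $\big(S,\frac{1}{r}D+\frac{k}{r}L\big)$ to get $r<(D\cdot L)_P\leqslant D\cdot L$, which is absurd because $L$ has such small degree ($-K_S\cdot L=\frac12$). The only group-theoretic input there is Lemma 3.9(iv), used to get log canonicity at curves. You instead transplant the Lemma 3.10 scheme: convex decomposition into fixed and mobile parts, and for the mobile part the orbit-counting inequality $|\Sigma|r_2^2< D_1\cdot D_2=2r_2^2$, which forces $P$ to be a $\ON{PSL}_2(7)$-fixed point; you then correctly observe that equivariance of the projection to the Klein quartic and irreducibility of the $3$-dimensional representation leave the vertex as the only fixed point. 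This buys you independence from the external local inequality, at the price of needing the fixed-point analysis and the classification of invariant curves for the fixed part; the paper's route is more uniform and entirely local on the ruling line. One step you should tighten: ``all coefficients are at most $\frac12$'' does not by itself give log canonicity of $(S,\frac{1}{r_1}F)$ at a point --- you still need a multiplicity bound, which here follows by intersecting $\frac{1}{r_1}F$ with the ruling line through the point (coefficient of $L$ at most $\frac16$ plus residual intersection at most $\frac14$ gives multiplicity below $1$). This is the same leap the paper itself makes in Lemma 3.10, so I regard it as a gap in exposition rather than in substance.
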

\begin{proof}
The pair is log canonical at curves by Lemma 3.9. Suppose it is not log canonical at a point $P$. Let $L$ be the curve from the ruling of the cone passing through the point $P$, then $L\cdot (-K_S)=\frac{1}{2}$. There is a number $k\geqslant0$ such that $\mathcal{M}\vert_S=kL+\mathcal{M}^\prime$, where $\mathcal{M}^\prime$ does not contain $L$. Since the pair is log canonical at curves $k\leqslant r$. Consider a generic divisor $D\in \mathcal{M}^\prime$. The pair $(S,\frac{1}{r}D+\frac{k}{r}L)$ is not log canonical at $P$, therefore by \cite[Theorem~7]{Tigers} 
\begin{align*}
r<(D\cdot L)_P\leqslant D\cdot L=r-\frac{k}{2}.
\end{align*}
\end{proof}

\begin{Prop}
Suppose $\pi:X\to \MP^1$ is a $G$-del Pezzo fibration. Suppose $f$ is a $G$-equivariant fiberwise birational map to a $G$-del Pezzo fibration. 
\begin{enumerate}[(i)]
	\item If $G$ is $\mathcal{A}_6$ and $X$ is $\MP^2\times\MP^1$ then $f$ is an isomorphism.
	
	\item If $G$ is $\ON{PSL}_2(7)$ and $X$ is $\MP^2\times\MP^1$ or $X_n$, then $f$ is an isomorphism.
\end{enumerate}
\end{Prop}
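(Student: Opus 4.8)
The plan is to deduce the statement from Theorem 3.6. A fiberwise birational map is by definition an isomorphism on the generic fiber, hence an isomorphism over a dense open subset of $\MP^1$, so $f$ fails to be an isomorphism only over finitely many points $t_1,\dots,t_k\in\MP^1$; write $F_{t_i}$ for the corresponding fibers. It therefore suffices to verify the hypothesis of Theorem 3.6 at each $F_{t_i}$: that for every integer $l$ and every $G$-invariant mobile system $\mathcal{M}\subset\big|-nK_V+lF_{t_i}\big|$ the pair $(V,\frac{1}{n}\mathcal{M})$ is canonical at every subvariety of $F_{t_i}$. Indeed, the mobile $G$-invariant system $\bar{\Gamma}=\varphi^{-1}(\Gamma)$ occurring in the proof of Theorem 3.6 lies in $\big|-nK_V+\pi^*(nD)+\sum l_iF_{t_i}\big|$, and after choosing $D$ supported away from the $t_i$ it is, locally near each $F_{t_i}$, a system of exactly the above form; so the per-fiber condition is precisely what makes $(V,\frac1n\bar\Gamma)$ canonical. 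Only $V=\MP^2\times\MP^1$ and $V=X_n$ need be treated.

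To check canonicity at a fiber $F_0$ I would restrict to it. Since $G$ is simple non-abelian it cannot act nontrivially on $\MP^1$, so $G$ fixes every fiber; and as $F_0=\pi^*(\mathrm{pt})$ is a Cartier divisor with $K_V|_{F_0}=K_{F_0}$, the restriction $\mathcal{M}|_{F_0}$ is a $G$-invariant subsystem of $\big|-nK_{F_0}\big|$. By Lemma 3.10 (when $F_0\cong\MP^2$), Lemma 3.11 (when $F_0\cong S_2$), or Lemma 3.12 (when $F_0$ is the cone $S$) the pair $(F_0,\frac{1}{n}\mathcal{M}|_{F_0})$ is log canonical --- in the cone case, away from the vertex. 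The passage back to $V$ is inversion of adjunction: log canonicity of $(F_0,\frac{1}{n}\mathcal{M}|_{F_0})$ forces $(V,F_0+\frac{1}{n}\mathcal{M})$ to be log canonical near $F_0$. For any valuation $E$ whose center lies in $F_0$ one has $\nu_E(F_0)\geqslant 1$, whence
\begin{align*}
a\Big(E,V,\tfrac{1}{n}\mathcal{M}\Big)=a\Big(E,V,F_0+\tfrac{1}{n}\mathcal{M}\Big)+\nu_E(F_0)\geqslant -1+1=0 .
\end{align*}
Thus $(V,\frac{1}{n}\mathcal{M})$ is canonical at every subvariety of $F_0$. This settles $\MP^2\times\MP^1$ completely, and settles $X_n$ everywhere except the vertices of its cone fibers.

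The remaining, and main, difficulty is the vertex $Q$ of a cone fiber $S\subset X_n$, which is a $\frac{1}{2}(1,1,1)$-point and the one place where Lemma 3.12 is silent. By Corollary 2.12 it is enough to show that $(X_n,\frac{1}{n}\mathcal{M})$ is canonical at $Q$, and by Proposition 2.10 this reduces to the single divisor $E$ obtained by blowing up $Q$; since $a(E,X_n,0)=\frac{1}{2}$ by Lemma 2.9, the task is to prove $\nu_E(\mathcal{M})\leqslant\frac{n}{2}$. I would bound $\nu_E(\mathcal{M})$ on the blow-up $f:\widetilde{X}\to X_n$, where $E\cong\MP^2$ and $\mathcal{O}_E(E)=\mathcal{O}(-2)$ by Lemma 2.9, by intersecting two general members $D_1,D_2\in\mathcal{M}$ as in Lemma 2.15 and exploiting the ruling lines $L$ of the cone, for which $L\cdot(-K_S)=\frac{1}{2}$; equivalently one may pass to the double cover $\MC^3\to\MC^3/\{\pm 1\}$ presenting $Q$ and invoke the Corti inequality (Proposition 2.6) there. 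This local half-point estimate is the technical heart of the proof; everything else is a formal consequence of the fiberwise log canonicity recorded in Lemmas 3.10--3.12. Once $\nu_E(\mathcal{M})\leqslant\frac{n}{2}$ is in hand, the hypothesis of Theorem 3.6 holds at every bad fiber and $f$ is an isomorphism.
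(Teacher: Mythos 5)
Your overall strategy is the same as the paper's: reduce to Theorem 3.6, restrict a hypothetical non-canonical $G$-invariant system to the bad fiber, invoke Lemmas 3.10--3.12 via inversion of adjunction, and treat the vertex of a cone fiber of $X_n$ separately. The one place where you stop short is precisely the step you label ``the technical heart'': the bound $\nu_{E_Q}(\mathcal{M})\leqslant\frac{n}{2}$ at the half-point $Q$. You correctly identify the relevant geometry (the ruling lines $L$ with $L\cdot(-K_S)=\frac{1}{2}$) but then only gesture at possible methods (an intersection computation \`a la Lemma 2.15, or the Corti inequality on the double cover), neither of which is needed. The estimate is immediate from Lemma 2.12: a generic ruling line $L$ through $Q$ is not contained in $\operatorname{Supp}D$ for generic $D\in\mathcal{M}$ (the system is mobile and the fiber contains a one-parameter family of such lines), $L\cdot F=0$, so $L\cdot\frac{1}{n}D=-K_X\cdot L=\frac{1}{2}$, and the projection formula on the blow-up of $Q$ gives $0\leqslant\widetilde{D}\cdot\widetilde{L}=\frac{n}{2}-\nu_{E_Q}(D)$; Proposition 2.10 and Corollary 2.11 then dispose of $Q$ and of all curves through it, exactly as you set up. So the proposal is not wrong anywhere, but as written it leaves its hardest-looking step as a plan, when that step is a two-line consequence of a lemma you already cite the ingredients of; the paper simply runs this half-point exclusion first, so that the remaining centers are nonsingular points or curves and inversion of adjunction applies cleanly.
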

\begin{proof}
Suppose the map $f$ is not an isomorphism. Then by Theorem 3.6 there is a $G$-invariant linear system $\mathcal{M}\subset \big| -nK_X+lF \big|$ such that the pair $(X,\frac{1}{n}\mathcal{M})$ is not canonical at $B\subset F$. Lemma 2.12 implies that $B$ cannot be a singular point of the type $\frac{1}{2}(1,1,1)$ since curves from the ruling of the cone $F$ intersect divisors in $\mathcal{M}$ by $\frac{n}{2}$. Thus $B$ is a nonsingular point of $X$ or a curve. Then by Inversion of Adjunction (\cite[Theorem~17.7]{Kollar}) the pair $(F,\mathcal{M}\vert_F)$ is not log canonical at $B$ which contradicts Lemma 3.10, Lemma 3.11, or Lemma 3.12.
\end{proof}

\begin{proof}[Proof of Theorem 1.7]
Suppose $\pi:X\to \MP^1$ is an $\mathcal{A}_6$-del Pezzo fibration, then the general fiber $X/\MP^1$ of $\pi$ is $\MP^2_{\MC(t)}$ by Theorem 3.2. By Lemma 3.3 there exists an $\mathcal{A}_6$-equivariant fiberwise map from $X$ to $\MP^2\times\MP^1$. By Proposition 3.13 this map must be an isomorphism, thus $X\cong \MP^2\times\MP^1$.
\end{proof}
\noindent
Proof of Theorem 1.9 is analogous.

\begin{Lemma}
Varieties $X_n$ satisfy $K^2$-condition, that is $K_{X_n}^2\not\in\overline{NE}^\circ(X_n)$, if and only if $n\geqslant 3$.
\end{Lemma}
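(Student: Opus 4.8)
The plan is to compute the intersection theory of $X_n$ explicitly from the ambient toric variety $Y_n$, pin down the two extremal rays of the Mori cone, and then locate $K_{X_n}^2=(-K_{X_n})^2$ relative to that cone.

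First I would set up the Chow ring of $Y_n$. Writing $H$ and $R$ for the two generators of $\Cl(Y_n)=\MZ^2$ given by the rows of the grading matrix, the variables $x,y,z$ have class $H$, the variables $u,v$ have class $R$, and $w$ has class $2H-nR$. The two primitive collections $\{u,v\}$ and $\{x,y,z,w\}$ yield the Stanley--Reisner relations $R^2=0$ and $H^3(2H-nR)=0$. Normalising by $H^3R=\tfrac12$ (the restriction of $H$ to a fibre $\MP(1,1,1,2)$ has $H^3=\tfrac12$), these give $H^4=\tfrac n4$ and $H^2R^2=0$. Since $[X_n]=4H$, restricting to $X_n$ and writing $A=H|_{X_n}$, $F=R|_{X_n}$ for the fibre class, I obtain $A^3=4H^4=n$, $A^2F=4H^3R=2$, and $AF^2=F^3=0$. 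By adjunction $-K_{X_n}=(-K_{Y_n}-X_n)|_{X_n}=A+(2-n)F$, which I would cross-check against the bidegree $(1,2-n)$ of $-K_{X_n}$ noted in Example~1.8.

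Next I would determine $\overline{NE}(X_n)$. Both $A$ and $F$ are nef: $F$ is the pullback of a point under $\pi$, while $|A|$ is generated by $x,y,z$ and its base locus on $X_n$ is exactly the finite set $\{x=y=z=0\}\cap X_n$, which is the $2n$ half-points. Since no curve is contained in a finite set, for any curve $\gamma$ a general member of $|A|$ can be chosen not to contain $\gamma$, whence $A\cdot\gamma\ge0$; thus $A$ is nef. Consequently, in the coordinates $\gamma\mapsto(\gamma\cdot A,\gamma\cdot F)$ on $N_1(X_n)$, the cone $\overline{NE}(X_n)$ lies in the first quadrant. I then exhibit effective curves on both axes: any curve contained in a fibre generates the ray $(\ast,0)$ (as $A|_F=-K_F$ is ample), and the horizontal curve $C=\MP^1\times\{p\}$ inside the surface $\{w=0\}\cap X_n\cong\MP^1\times C_4$, with $C_4$ the Klein quartic, satisfies $C\cdot F=1$ and $C\cdot A=0$ and so generates the ray $(0,1)$. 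Since $\rho(X_n)=2$, the Mori cone is a full-dimensional cone squeezed between these two boundary rays, hence $\overline{NE}(X_n)$ is precisely the first quadrant, with interior $\{(p,q):p>0,\ q>0\}$.

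Finally I locate $K_{X_n}^2=(-K_{X_n})^2$. Using the intersection numbers above, $(-K_{X_n})^2\cdot A=A^3+4(2-n)A^2F=n+4(2-n)=8-3n$ and $(-K_{X_n})^2\cdot F=A^2F=2$, so $K_{X_n}^2$ has coordinates $(8-3n,2)$. This lies in the interior of $\overline{NE}(X_n)$ exactly when $8-3n>0$, i.e.\ when $n\le2$; therefore the $K^2$-condition $K_{X_n}^2\notin\overline{NE}^\circ(X_n)$ holds if and only if $n\ge3$. The main obstacle is the determination of the Mori cone—in particular the nefness of $A$ despite $|A|$ acquiring base points at the singular points, and the verification that the section $C$ spans a genuine extremal ray—since once the cone is pinned to the first quadrant the remaining step is a direct calculation.
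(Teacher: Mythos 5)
Your proof is correct and takes essentially the same route as the paper: compute the intersection numbers on $X_n$, identify $\overline{NE}(X_n)$ as spanned by the vertical curves and the horizontal curves inside $\{w=0\}\cap X_n$, and read off that $K_{X_n}^2$ pairs to $(8-3n,2)$ against the two nef generators, hence lies in the interior exactly when $n\leqslant 2$. The only differences are cosmetic --- the paper works in the divisor basis $E,F$ and simply asserts the generators $s=S\cdot E$ and $f$ of the Mori cone, whereas you justify the cone via nefness of $A$ and $F$; note also the harmless slip in your last display, where the coefficient should be $2(2-n)A^2F$ rather than $4(2-n)A^2F$, though the value $8-3n$ is right.
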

\begin{proof}
Let $E_Y$ be a unique effective divisor of bidegree $(2,-n)$, that is $E_Y$ is given by the equation $w=0$ on $Y_n$. Let $F_Y$ be a divisor of bidegree $(0,1)$, then $F_Y$ is a fiber of a $\MP(1,1,1,2)$-fibration $\pi_Y:Y_n\to \MP_1$. Let $S_Y$ be a surface of bidegree $(1,0)$, that is $S$ is given by an equation $x=0$. Clearly $E=E_Y\vert_{X_n}$ and $F=F_Y\vert_{X_n}$ generate the cone of effective divisors on $X_n$. Set $S_Y\vert_{X_n}=S$. Let $f$ be a line in a fiber and let $s=S\cdot E$, clearly $s$ is a bisection of $\pi=\pi_Y\vert_{X_n}$. It is easy to see that $s$ and $f$ generate the cone of effective curves of $X_n$.

Since $K_{Y_n}$ is a divisor of bidegree $(-5,n-2)$ by adjunction $K_{X_n}$ is a divisor of bidegree $(-1,n-2)$, thus 
\begin{align*}
K_{X_n}\sim -S + (n-2)F.
\end{align*}
On the other hand, equivalence $S\sim \frac{1}{2}E+\frac{n}{2}F$ implies
\begin{align*}
K_{X_n}\sim -\frac{1}{2}E + (\frac{n}{2}-2)F.
\end{align*}
The variety $X_n$ is a del Pezzo fibration of degree $2$, hence $S\cdot F\equiv 2f$. Therefore we compute
\begin{align*}
K^2_{X_n}=\big( -\frac{1}{2}E + (\frac{n}{2}-2)F \big)\cdot \big( -S + (n-2)F \big)  \equiv \frac{s}{2}+(8-3n)f.
\end{align*}
Clearly, $K^2_{X_n}$ is not in the interior of the Mori cone if and only if $n\geqslant 3$.
\end{proof}

Thus Theorem 1.1 is applicable and Corollary 1.11 holds.

%
%

\section{Rigidity of del Pezzo fibrations}
In this section we recall the proof of rigidity of smooth del Pezzo fibrations of degree $2$ and see what is the difficulty in the singular case.

\begin{Def}[{\cite{Pukh_Dir_Prod}, Definition 1}]
Let $\pi:V\to \mathds{P}^1$ be a del Pezzo fibration. We say that $V$ is \emph{birationally rigid} if for any birational map $f:V\dasharrow V^\prime$ to a Mori fiber space $\pi^\prime:V^\prime\to S$ the base $S$ is $\mathds{P}^1$, there is a commutative diagram
\begin{displaymath}
\xymatrix
{ 
	V\ar@{-->}[r]^f \ar[d]^\pi & \ar[d]^{\pi^\prime} V^\prime \\
	\mathds{P}^1\ar@{=}[r] & \mathds{P}^1,
}
\end{displaymath}
 and the general fibers of $\pi$ and $\pi^\prime$ are isomorphic.
\end{Def}

\begin{Th}[{Noether-Fano~inequality,~\cite{Pukh123}}]
Suppose $\pi:X\to\mathds{P}^1$ is a del Pezzo fibration. Suppose it satisfies $K^2$-condition, that is $K^2_X\not\in \overline{NE}^\circ (X)$. Suppose $g:X\dasharrow Y$ is a birational map which is not a morphism, then there exists a linear system $\mathcal{M}$ and a positive rational number $\lambda$ such that $\lambda\mathcal{M}+K_X\sim \gamma F$ and the pair $(X,\lambda\mathcal{M})$ is not canonical.
\end{Th}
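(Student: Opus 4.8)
The plan is to run the standard Noether--Fano--Iskovskikh argument, reformulated for the fibration $\pi$, and to contradict the assumption that $g$ is not a morphism. First I would produce the linear system. Since $g\colon X\dashrightarrow Y$ lands on a Mori fiber space $\pi'\colon Y\to S$, I pick an ample divisor $A$ on $S$ and an integer $n\gg0$ so that $\mathcal{M}_Y=\big|-nK_Y+(\pi')^{*}A\big|$ (or $\big|-nK_Y\big|$ when $S$ is a point) is base point free; then $(Y,\frac1n\mathcal{M}_Y)$ is terminal and $K_Y+\frac1n\mathcal{M}_Y$ is nef, being numerically a pullback from $S$. Let $\mathcal{M}=g^{-1}_{*}\mathcal{M}_Y$ be the strict transform, a mobile linear system on $X$. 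Because $\pi$ has relative Picard rank one, $\ON{Pic}(X)\otimes\MQ$ is spanned by $K_X$ and the fiber class $F$, so I may write $\mathcal{M}\equiv -\mu K_X+lF$ with $\mu>0$ (the restriction of the mobile $\mathcal{M}$ to a general fiber is a positive multiple of $-K$) and $l\in\MZ$. Setting $\lambda=1/\mu$ and $\gamma=l/\mu$ yields exactly $\lambda\mathcal{M}+K_X\sim\gamma F$, the relation required by the statement.

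It remains to show $(X,\lambda\mathcal{M})$ is not canonical, and I argue by contradiction: assume it is canonical. The crucial point, and the only place the $K^2$-condition enters, is to deduce that $\gamma\geqslant0$. For this I examine the self-intersection $1$-cycle of two general members $D_1,D_2\in\mathcal{M}$. Using $F^2\equiv0$ and $K_X\cdot F\equiv -d\,f$, where $d$ is the fiber degree and $f$ the class of a line in a fiber, I get
\begin{align*}
\mathcal{M}^2\equiv \mu^2K_X^2+2\mu l d\,f.
\end{align*}
Writing $K_X^2\equiv\alpha s+\beta f$ in the basis $\{s,f\}$ of $N_1(X)$ spanned by the two extremal rays of $\overline{NE}(X)$, intersection with a general fiber gives $\alpha(s\cdot F)=K_X^2\cdot F=K_{F}^2=d>0$, hence $\alpha>0$; the $K^2$-condition $K_X^2\notin\overline{NE}^{\circ}(X)$ then forces $\beta\leqslant0$. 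Since $\mathcal{M}^2$ is an effective cycle, its $f$-coefficient $\mu^2\beta+2\mu l d$ is nonnegative, and with $\beta\leqslant0$ this gives $l\geqslant0$, i.e. $\gamma\geqslant0$. Consequently $K_X+\lambda\mathcal{M}\equiv\gamma F$ is nef.

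Finally I would compare canonical models exactly as in the proof of Theorem 3.6. Both $(X,\lambda\mathcal{M})$ and $(Y,\frac1n\mathcal{M}_Y)$ are now canonical with nef log-canonical divisor; after adding a sufficiently ample pullback from the base to the mixed systems so that the relevant $\big|K+M\big|$ become ample, the uniqueness of the canonical model \cite[Theorem~1.3.20]{Cheltsov-Fano} identifies the two models and shows $g$ is an isomorphism, in particular a morphism. This contradicts the hypothesis, so $(X,\lambda\mathcal{M})$ cannot be canonical, which is the assertion of the theorem.

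The step I expect to be the main obstacle is the deduction $\gamma\geqslant0$: it requires careful bookkeeping of the class of $\mathcal{M}^2$ in $N_1(X)$ and of the two extremal rays, and it is genuinely where the $K^2$-condition is indispensable. Indeed, if $K_X^2$ were interior ($\beta>0$) the effectivity inequality would permit $l<0$, matching the fact that such fibrations (for instance $X_n$ with $n<3$) are rational and not rigid. A secondary technical point is that $Y$ may fiber over a base $S\neq\MP^1$; one must check that the canonical-model comparison, after the ampleness adjustment, still forces $g$ to respect $\pi$ and hence to be a morphism in the sense of Definition 4.1.
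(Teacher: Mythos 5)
The paper does not actually prove this statement: Theorem~4.2 is quoted from \cite{Pukh123} and used as a black box, so there is no internal proof to compare against; I will therefore measure your proposal against the standard Noether--Fano--Iskovskikh argument that the citation points to. The first two thirds of your proposal are correct and are exactly the standard mechanism: constructing $\mathcal{M}$ as the strict transform of a base-point-free system on $Y$, writing $\mathcal{M}\equiv-\mu K_X+lF$ using $\rho(X)=2$, and deriving $l\geqslant 0$ by decomposing the effective cycle $\mathcal{M}^2\equiv\mu^2K_X^2+2\mu l d\,f$ along the two extremal rays and invoking $K_X^2\notin\overline{NE}^{\circ}(X)$. This is precisely where and how the $K^2$-condition enters, and your identification of it as the crux of the ``$\gamma\geqslant 0$'' step is right.

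The genuine gap is the concluding step. You dispose of the case ``$(X,\lambda\mathcal{M})$ canonical'' by appealing to uniqueness of canonical models as in Theorem~3.6, but that argument is only available when $g$ is a fiberwise map between fibrations over the \emph{same} base: the ample twist is a pullback of one divisor $D$ from the common base $Z$, added symmetrically on both sides, and the mixed systems $M_V,M_{\bar V}$ are controlled only because $\bar\Gamma$ and $\bar\Lambda$ differ from $\Gamma$ and $\Lambda$ by multiples of the same fiber class. In the setting of Theorem~4.2 the target $\pi'\colon Y\to S$ may be a Fano threefold, a conic bundle, or a del Pezzo fibration reached by a non-fiberwise map, and then there is no common base from which to pull back the ample twist, while $K_X+\lambda\mathcal{M}\equiv\gamma F$ is nef but not big, so it cannot be made ample by adding multiples of $F$ alone. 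There is also an unaddressed mismatch of coefficients: the canonical-model comparison must be made with the \emph{same} boundary on both sides, i.e.\ with $\tfrac1n\mathcal{M}$ and $\tfrac1n\mathcal{M}_Y$, whereas your canonicity hypothesis concerns $\tfrac1\mu\mathcal{M}$; relating $\mu$ to $n$ (the ``degree drops'' part of the Noether--Fano inequality) is itself part of the content of the theorem. The standard proof avoids both problems by passing to a common resolution $W\xrightarrow{p}X$, $W\xrightarrow{q}Y$, writing $p^*(K_X+\lambda\mathcal{M})+\sum(a_i-\lambda m_i)E_i=q^*(K_Y+\lambda\mathcal{M}_Y)+(\ldots)$, and testing this identity against suitably general curves (fibers of $q$ over $S$, or very general complete-intersection curves when $S$ is a point) to contradict canonicity together with $\gamma\geqslant 0$. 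You should either carry out that resolution argument or restrict your canonical-model comparison to the fiberwise case and handle the remaining targets separately.
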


To prove birational rigidity we would like to show that there are no such systems. But there are fiberwise maps and hence there are systems with non canonical singularities(Theorem 3.6). The method of supermaximal singularities was developed in \cite{Pukh123} to deal with this difficulty.


\begin{Prop}[{\cite{Pukh123}}]
Let $\pi:X\to \MP^1$ be a del Pezzo fibration satisfying $K^2$-condition.
Let $g:X\to V$ be a birational map to a Mori fiber space $\bar{\pi}:V\to Z$. Suppose also that a map $g$ is not fiberwise if $\bar{\pi}: V\to Z$ is a del Pezzo fibration. Let $\mathcal{M}=f^{-1}\big| \bar{\pi}^* H \big|$, where $H$ is a very ample divisor on $Z$. Then there are numbers $n>0$, $\gamma\geqslant0$ such that $\mathcal{M}\subset \big| -nK_X+\gamma n F \big|$ and one of the following holds.
\begin{enumerate}[(i)]
	\item There is a valuation $\nu$ of the field $K(X)$ such that its center on $X$ is a curve and the pair
$\Big(X,\frac{1}{n}\mathcal{M}\Big)$ is not canonical at $\nu$;

	\item There are finitely many valuations $\nu_i$ of $K(X)$ such that 
	\begin{itemize}
		\item the centers $P_i$ of $\nu_i$ are the points which all lie in different fibers,
		\item the pair $\Big(X,\frac{1}{n}\mathcal{M}\Big)$ is not canonical at every $\nu_i$,
		
		\item and the following inequality holds
\begin{align*}
-\sum \frac{a(\nu_i,X,\frac{1}{n}\mathcal{M})}{\nu_i(F_i)} > \gamma.
\end{align*}
	\end{itemize}
\end{enumerate}
\end{Prop}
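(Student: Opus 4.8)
The plan is to derive the statement from the Noether--Fano inequality (Theorem 4.2) and then run the method of (super)maximal singularities. First I would fix the linear system. Since $H$ is very ample on $Z$ and $g$ is birational, $\mathcal{M}=g^{-1}\big|\bar{\pi}^{*}H\big|$ is a mobile system with no fixed component. Using $\Cl(X)=\MZ\langle -K_X\rangle\oplus\MZ\langle F\rangle$ I would write $\mathcal{M}\subset\big|-nK_X+mF\big|$; mobility and effectivity, together with the two--dimensional structure of $\overline{NE}(X)$, force $n>0$ and $m\geqslant 0$, so setting $\gamma=m/n\geqslant 0$ gives $\mathcal{M}\subset\big|-nK_X+\gamma nF\big|$. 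The hypothesis that $g$ is not a fiberwise isomorphism when $\bar{\pi}$ is a del Pezzo fibration is exactly the input Theorem 4.2 needs in order to conclude that the pair $(X,\tfrac1n\mathcal{M})$ is not canonical.

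Next I would localize the maximal singularity. Because $\mathcal{M}$ is mobile the pair is canonical in codimension one, so any valuation $\nu$ with $a(\nu,X,\tfrac1n\mathcal{M})<0$ has center of codimension at least two on $X$, that is a curve or a point. If one such center is a curve we are in case (i) and are done. So assume every maximal singularity is a point; let $P_1,P_2,\dots$ be the finitely many centers, for each choose $\nu_i$ with $a_i:=a(\nu_i,X,\tfrac1n\mathcal{M})<0$, and let $F_i$ be the fiber through $P_i$. What remains is to establish the inequality $-\sum a_i/\nu_i(F_i)>\gamma$ for representatives lying in distinct fibers, that is case (ii).

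For this I would read $\gamma$ as a twisting budget. Replacing $\mathcal{M}$ by $\mathcal{M}-kF$ shifts $a(\nu_i,X,\tfrac1n\mathcal{M})$ at rate $\nu_i(F_i)/n$ per unit of $k$, so the twist needed to render $\nu_i$ canonical equals $-a_i/\nu_i(F_i)$ measured in units of $\gamma$. I would then argue by contradiction: suppose, after grouping the points by fiber, that $-\sum a_i/\nu_i(F_i)\leqslant\gamma$. The point maximal singularities can be removed one fiber at a time by fiberwise elementary links (for degree $2$ these involve the relative Geiser involution of the double cover over $\MC(t)$), each link lowering $\gamma$ by the normalized discrepancy of the fiber it treats. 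Since the total required does not exceed the budget, after finitely many links I reach a del Pezzo fibration $X'$, a twist $\gamma'\geqslant 0$, and a transform of $\mathcal{M}$ that is now canonical. A canonical pair whose system is the transform of $\big|\bar{\pi}^{*}H\big|$ forces, by uniqueness of the relative canonical model (exactly as in the proof of Theorem 3.6), the induced map $X'\dasharrow V$ and hence $g$ to be a fiberwise isomorphism onto a del Pezzo fibration; this contradicts either that $\bar{\pi}$ is not a del Pezzo fibration or that $g$ is not fiberwise. Hence $-\sum a_i/\nu_i(F_i)>\gamma$, and since the links act one fiber at a time the representatives may be chosen in distinct fibers.

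I expect this last step to be the main obstacle. The delicate ingredients are the construction of the fiberwise untwisting links and the verification that a maximal singularity concentrated in a single fiber is resolved by such a link with precisely the drop $-a_i/\nu_i(F_i)$ in the twist; this requires controlling the relative automorphisms of a degree $2$ del Pezzo surface over $\MC(t)$ and their effect on discrepancies, with Lemma 2.9 governing how a valuation over a $\tfrac12(1,1,1)$-point sees the fiber $F_i$. Proving termination of the untwisting procedure, and the bookkeeping needed when several maximal points share a fiber so that passing to distinct-fiber representatives keeps the inequality strict, are the remaining technical hurdles.
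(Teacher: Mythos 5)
The paper itself gives no proof of this proposition (it is quoted from Pukhlikov's paper \cite{Pukh123}), so I am comparing your attempt with the argument in that reference. Your first two steps are sound and standard: writing $\mathcal{M}\subset|-nK_X+\gamma nF|$ with $n>0$, $\gamma\geqslant 0$ (though you should make explicit that $\gamma\geqslant 0$ is where the $K^2$-condition and the mobility of $\mathcal{M}$, via $D_1\cdot D_2\in\overline{NE}(X)$, actually enter), and reducing via Theorem 4.2 and mobility to the alternative ``curve center'' versus ``finitely many point centers.'' Your reading of $\gamma$ as a budget and of $-a_i/\nu_i(F_i)$ as the threshold making $\nu_i$ canonical after subtracting a multiple of $F_i$ is also correct.

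The genuine gap is the mechanism you propose for the key inequality. You argue that if $-\sum a_i/\nu_i(F_i)\leqslant\gamma$ then the point maximal singularities ``can be removed one fiber at a time by fiberwise elementary links'' (relative Geiser involutions), each lowering $\gamma$ by the normalized discrepancy. No such links exist in general: the fiberwise involutions $\chi_C$ of Proposition 4.5 untwist maximal singularities whose center is a \emph{section} (a point of the generic fiber over $\MC(t)$), not a closed point of $X$ sitting in a single fiber. If point maximal singularities could be untwisted by fiberwise links, the entire supermaximal-singularity machinery of Sections 4--7 would be unnecessary and birational rigidity would fail. The actual proof is purely numerical and modifies nothing: for each bad fiber $F_t$ set $e_t=\max\{-a(\nu,X,\frac{1}{n}\mathcal{M})/\nu(F_t)\}$ over valuations centered in $F_t$; if $\sum e_t\leqslant\gamma$ then the pair $\big(X,\frac{1}{n}\mathcal{M}-\sum e_tF_t\big)$ is canonical everywhere (subtracting $e_tF_t$ raises the discrepancy of any valuation centered in $F_t$ by $e_t\nu(F_t)\geqslant -a$ and leaves the others unchanged), while $K_X+\frac{1}{n}\mathcal{M}-\sum e_tF_t\equiv(\gamma-\sum e_t)F$ remains effective; the Noether--Fano argument applied to this adjusted canonical pair forces $g$ to be fiberwise onto a del Pezzo fibration, contradicting the hypothesis. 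This also disposes of your remaining worries: taking the maximum per fiber is exactly how one passes to representatives in distinct fibers, and there is no termination issue because nothing is iterated.
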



Now suppose that $X$ is a del Pezzo fibration of degree $2$ with only $\frac{1}{2}(1,1,1)$-singularities. Suppose also that every fiber containing singularity is given by the equation $q_4(x,y,z)=0$ in $\MP(1_x,1_y,1_z,2_w)$.

\begin{Prop}
Let $C$ be a curve on $X$ which is not a section of $\pi$ and let $Q$ be a half-point. Then pair $(X,\frac{1}{n}\mathcal{M})$ is canonical at $C$ and $Q$.
\end{Prop}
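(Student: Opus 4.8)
The plan is to treat the half-point $Q$ and the curve $C$ separately, reducing as much as possible to the fibrewise geometry of a singular fibre $F$, which by hypothesis is a quartic cone $q_4(x,y,z)=0$ in $\MP(1,1,1,2)$ whose vertex is the half-point $Q$.

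First I would dispose of the half-point. The cone $F$ carries a one-parameter family of rulings $L$, all passing through $Q$; exactly as in the proof of Lemma 3.12, each ruling satisfies $-K_X\cdot L=\frac12$. Since $L$ lies in the fibre $F$ we have $F\cdot L=0$, so for a general member $D\in\mathcal{M}\subset\big|-nK_X+\gamma nF\big|$ one gets $D\cdot L=-nK_X\cdot L=\frac n2$, that is $\frac1n D\cdot L=\frac12$. Because the rulings move in a one-dimensional family while the base locus of the mobile system $\mathcal{M}$ has only finitely many one-dimensional components, a general ruling $L$ is not contained in $\ON{Supp} D$. If $D$ avoids $Q$ there is nothing to prove; otherwise Lemma 2.12, applied to the boundary $\frac1n D$ and the curve $L$, shows that $(X,\frac1n\mathcal{M})$ is canonical at $Q$. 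Once canonicity at every half-point is known, Corollary 2.11 immediately gives canonicity at every curve through a half-point; in particular every ruling of every cone fibre is settled.

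It then remains to treat a curve $C$ that is not a section and meets no half-point. If $C$ is horizontal, then $C\cdot F=\deg(C/\MP^1)\geqslant 2$ precisely because $C$ is not a section. For general $D_1,D_2\in\mathcal{M}$ and $Z=D_1\cdot D_2$, the relations $F^2\equiv 0$ and $K_X^2\cdot F=K_F^2=2$ give $Z\cdot F=n^2K_X^2\cdot F=2n^2$. The generic point of $C$ lies in the smooth locus, so $\mult_C Z\geqslant(\mult_C D)^2$; were the pair non-canonical at $C$ we would have $\mult_C D>n$, and the contribution of $C$ to $Z\cdot F$ would be $(\mult_C Z)(C\cdot F)>n^2\cdot 2=2n^2=Z\cdot F$, which is impossible since $Z$ and its remaining components are effective. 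Hence the pair is canonical along every horizontal non-section.

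The remaining, and hardest, case is a vertical curve $C\subset F_0$ avoiding the half-points. Restricting to the fibre, $D|_{F_0}\in\big|-nK_{F_0}\big|$ is a well-defined effective divisor (as $\mathcal{M}$ is mobile, $F_0\not\subset\ON{Supp} D$), and $\mult_C D=\mult_C(D|_{F_0})$. Since $-K_{F_0}$ is ample of degree $2$, intersecting with $-K_{F_0}$ bounds $(\mult_C D)(-K_{F_0}\cdot C)\leqslant 2n$, which already forces canonicity whenever $-K_{F_0}\cdot C\geqslant 2$. The genuine obstacle is the anticanonical-degree-one curves, namely the $(-1)$-curves of a smooth fibre, for which this bound only yields $\mult_C D\leqslant 2n$. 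To exclude these I would pass to the global cycle $Z=D_1\cdot D_2$: a vertical class is numerically proportional to the fibre line $f$, so $C\equiv\mu f$, and $\mult_C Z>n^2$ forces the vertical part $Z_v$ to have an $f$-coefficient exceeding $n^2\mu$. Comparing this with the class $Z\equiv n^2K_X^2+4\gamma n^2 f$ and invoking the $K^2$-condition, which places $K_X^2$ on the boundary of $\overline{NE}(X)$, together with the effectivity of the horizontal part $Z_h$, should produce the contradiction. Balancing the unbounded parameter $\gamma$ against the $K^2$-condition is where I expect the main difficulty to lie, and it is exactly the point at which the method of supermaximal singularities is required.
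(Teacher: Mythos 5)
Your treatment of the half-point (rulings $L$ with $-K_X\cdot L=\tfrac12$, Lemma 2.12, then Corollary 2.11 for curves through $Q$) is exactly the paper's argument, and your exclusion of horizontal non-sections and of vertical curves of fibre-anticanonical degree at least $2$ is sound. The gap is precisely the case you isolate as ``the genuine obstacle'': a vertical curve $C$ in a Gorenstein fibre $F$ with $-K_F\cdot C=1$, i.e.\ a line of the degree-$2$ del Pezzo surface, where restriction to the fibre only gives $\mult_C\mathcal{M}\leqslant 2n$. The global argument you sketch does not close. Writing $K_X^2\equiv as+bf$ with $a>0$ and $b\leqslant 0$ (which is all the $K^2$-condition asserts), the effectivity of $Z_h\equiv Z-Z_v$ only bounds the $f$-degree of the vertical part by $n^2b+4\gamma n^2\leqslant 4\gamma n^2$; since $\gamma$ depends on the map being excluded and is not bounded, the inequality $\mult_C Z>n^2$ yields no contradiction. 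Nor can you defer to the supermaximal-singularity machinery: in this paper (Proposition 4.3(ii), Corollary 4.9 and Sections 5--7) that apparatus is built only for valuations whose centres are \emph{points}, exactly because Proposition 4.4 is supposed to have already eliminated all curve centres other than sections before it is invoked.

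A correct way to finish this case --- and in effect what lies behind the paper's citation of \cite{Pukh123} for curves outside the singular fibres --- is to stay inside the fibre: the Geiser involution pairs the line $C$ with a residual line $C'$ satisfying $C+C'\sim -K_F$ and $C\cdot C'=2$. Writing $D\vert_F=\nu C+\nu' C'+R$ with $R$ effective containing neither line, intersecting with $C'$ and with $C$ gives $n\geqslant 2\nu-\nu'$ and $n\geqslant 2\nu'-\nu$, whence $\nu\leqslant n$. You should also justify your implicit claim that the cone fibre has no anticanonical-degree-one curves away from the vertex: for $C\not\ni Q$ one has $C\cdot L\in\MZ_{>0}$, so $C\equiv rL$ with $rL^2=r/8\geqslant 1$ and hence $-K_F\cdot C=r/2\geqslant 4$; this is exactly the computation the paper carries out for curves in the singular fibre avoiding the half-point.
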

\begin{proof}
It has been shown in \cite{Pukh123} that the pair $(X,\frac{1}{n}\mathcal{M})$ is canonical at $C$ if it is not a section and does not lie in a fiber containing a half-point.

Let $F$ be a fiber containing the half-point $Q$, then $F$ is a quartic cone by assumption. The curves from the ruling of $F$ intersect $-K_X$ by $\frac{1}{2}$, therefore the pair $(X,\frac{1}{n}\mathcal{M})$ is canonical at half-points by Lemma 2.12. Thus by Corollary 2.11 the pair is also canonical at every curve passing through the half-point.

Suppose that a curve $C$ does not pass through $Q$. Let $L$ be a curve from the ruling of $F$ such that $L\not\subset \ON{Supp} C$. Then the intersection $C\cdot L\geqslant 1$ since it is a positive integer. On the other hand $C\equiv rL$ for some $r$ and 
\begin{align*}
C\cdot L=rL^2=\frac{r}{8}\geqslant 1.
\end{align*}
Suppose the pair is not canonical at $C$. Then for a generic $D\in \mathcal{M}$ there is the decomposition $D\vert_F=kC+D^\prime$, where $k>1$ and $D^\prime$ is an effective divisor on $F$ which does not contain $L$. It is impossible since 
\begin{align*}
D\vert_F\equiv -K_F\equiv 4L.
\end{align*}
\end{proof}

\begin{Prop}[{\cite[Section~3]{Pukh123}}]
Let $C$ be a section of $\pi$. Then either the pair $(X,\frac{1}{n}\mathcal{M})$ is canonical at $C$ or there exists a birational involution $\chi_C$ such that
\begin{itemize}
	\item the following diagram is commutative
\begin{displaymath}
\xymatrix
{ 
	X\ar@{-->}[r]^{\chi_C} \ar[d]^\pi & \ar[d]^{\pi} X \\
	\mathds{P}^1\ar@{=}[r] & \mathds{P}^1,
}
\end{displaymath}
	\item there are numbers $n^\prime<n$ and $\gamma^\prime$ such that $\chi_C(\mathcal{M})\subset \big| -n^\prime K_X +n^\prime\gamma^\prime \big|$, and
	\item the pair $\big(X,\frac{1}{n^\prime}\chi_C(\mathcal{M})\big)$ is canonical at $C$.
\end{itemize}
\end{Prop}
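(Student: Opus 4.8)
The plan is to follow Pukhlikov's construction of the section involution and to adapt its three ingredients---construction of $\chi_C$, the drop $n'<n$, and recovery of canonicity at $C$---to the present singular setting. I would work over the generic fibre $S=X_\eta$, a del Pezzo surface of degree $2$ over the field $K=\MC(t)$, realized by its anticanonical map as a double cover $p\colon S\to\MP^2_K$ branched along a quartic $B$. A section $C$ of $\pi$ is exactly the datum of a $K$-point $P\in S$. I would build $\chi_C$ fibrewise over $K$: the net $|-K_S|$ cut out by $p$ contains a pencil of (generically elliptic) anticanonical curves through $P$, whose base locus is $\{P,\tau(P)\}$ with $\tau$ the Geiser involution; on each such curve $E$, taking $P$ as origin, I set $\chi_C|_E\colon Q\mapsto -Q$. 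This is an involution fixing $P$, defined purely over $K$, hence automatically fibrewise over $\MP^1$, which gives the asserted commutative square. (Equivalently, $\chi_C$ is the Bertini involution of the degree-$1$ surface obtained by blowing up $P$, pushed back to $S$.)

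Next comes the dichotomy. If $(X,\tfrac1n\mathcal M)$ is canonical at $C$ we are in the first alternative and there is nothing to prove. Otherwise, non-canonicity along the smooth section $C$ means the blow-up of $C$ has negative discrepancy for the pair, i.e. $\nu:=\mult_C\mathcal M>n$. I would then compute the class of $\chi_C(\mathcal M)$ by tracking the action of the involution on $\Pic(X)\otimes\MQ=\MQ(-K_X)\oplus\MQ F$. Writing $\mathcal M\subset|-nK_X+\gamma nF|$ and restricting to the generic fibre, the anticanonical degree transforms linearly, $n'=\alpha n-\beta\nu$ for explicit positive integers $\alpha,\beta$ coming from the intersection theory of the involution on the fibre; the strict inequality $\nu>n$ is exactly what forces $n'<n$, while mobility of $\chi_C(\mathcal M)$ keeps $n'>0$. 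The coefficient $\gamma'$ of the fibre class is read off from the same computation, giving $\chi_C(\mathcal M)\subset|-n'K_X+n'\gamma' F|$.

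It then remains to verify that $\big(X,\tfrac1{n'}\chi_C(\mathcal M)\big)$ is canonical at $C$. The point is that $\chi_C$ was designed to untwist precisely the maximal singularity living over $C$: on the blow-up of $C$ where $\chi_C$ becomes a morphism, the projection formula converts the excess $\nu-n>0$ into the bound $\mult_C\chi_C(\mathcal M)\leqslant n'$, so the discrepancy of the divisor over $C$ for the new pair is non-negative. This reduces to the very inequality already used to establish $n'<n$, so no new estimate is needed here.

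I expect the main obstacle to be the interaction of this construction with the half-points. Since $C$ is a section it may pass through the vertex of a quartic-cone fibre, which is a $\tfrac12(1,1,1)$ singularity, and there both the blow-up defining $\chi_C$ and the intersection numbers governing the degree drop differ from the smooth case. I would therefore have to check that $\chi_C$ extends across such points---which should hold because the whole fibrewise construction takes place over $K$ and the half-points sit in special fibres over $\MP^1$---and that the multiplicity and discrepancy bookkeeping of the degree-drop step survives the presence of the quotient singularity, using the half-point blow-up formulas of Lemma~2.9 and the canonicity criterion of Lemma~2.12 in place of their smooth analogues.
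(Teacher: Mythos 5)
The paper does not actually prove this proposition: it is quoted verbatim from \cite[Section~3]{Pukh123} and used as a black box, so there is no in-paper argument to compare yours against. Your sketch is a faithful reconstruction of Pukhlikov's construction --- pass to the generic fibre $S=X_\eta$, blow up the $\MC(t)$-point $P$ determined by the section to get a degree-one del Pezzo surface, and descend its Bertini involution --- and the skeleton (non-canonicity at $C$ gives $\nu=\mult_C\mathcal M>n$; the involution acts on $\operatorname{Pic}$ so that $n'=3n-2\nu<n$, with $n'>0$ because mobility forces $\nu\leqslant\sqrt2\,n$; the new multiplicity $4n-3\nu$ is then below $n'$, giving canonicity at $C$) is exactly the right one. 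Two remarks. First, your worry about sections through half-points is already disposed of by the paper's own machinery: by Proposition 4.4 the pair is canonical at every half-point, and Corollary 2.11 then makes it canonical at \emph{every} curve through a half-point, so such sections land in the first alternative and the involution is only ever needed for sections avoiding the singular fibres' vertices, where the generic-fibre construction is literally Pukhlikov's. Second, a small imprecision: the base locus of the anticanonical pencil through $P$ is $\{P,\tau(P)\}$, and the Bertini involution of $\mathrm{Bl}_P S$ negates with respect to the base point of \emph{its} anticanonical pencil, which sits over $\tau(P)$ rather than over $P$; this does not change the $\operatorname{Pic}$ computation, but ``taking $P$ as origin'' is not quite the involution you mean. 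What your write-up still owes is the actual intersection-theoretic computation producing $\alpha=3$, $\beta=2$ and the verification $4n-3\nu\leqslant n'$, which you assert ``reduces to the inequality already used'' --- true, but it should be displayed rather than waved at.
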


The maps $\chi_C$ are the kind which we allow in the definition of rigid varieties. We prove that every map to a Mori fiber space is a composition of $\chi_C$ and fiberwise maps. Using Proposition 4.5 we can \emph{untwist} the curve $C$, that is we replace the map $f$ with $f\circ \chi_C$ and gain less singular linear system. Thus we only have to deal with nonsingular points, they are the real difficulty. 


\subsection{Supermaximal singularities}
Suppose we are in the case $(ii)$ of Proposition 4.3. By Proposition 4.4 we may assume that $X$ is smooth at the points, where the pair $(X,\frac{1}{n}\mathcal{M})$ is not canonical. Let $D_1,D_2\in\mathcal{M}$ be generic divisors and let $Z=D_1\cdot D_2$.
For some point $P$ we bound $\mult_P Z$ from above using the degrees and from below using Corti inequality. We show in this section that these bounds contradict if the fiber containing $P$ does not pass through half-points.

Let $C\subset X$ be the irreducible curve. We say that $C$ is \emph{horizontal} if $\pi(C)=\MP^1$ and that it is \emph{vertical} if $\pi(C)$ is a point. We say that a cycle $C$ is vertical (horizontal) if every curve in $C$ is vertical (horizontal). We decompose $Z$ into the vertical and the horizontal components
\begin{align*}
Z=\sum Z^v_t+Z^h,
\end{align*}  
where the support of $Z_t^v$ is in the fiber $F_t=\pi^{-1}(t)$, $t\in \MP^1$. Define the degree of a vertical $1$-cycle $C^v$ by the number $\deg C^v=C^v\cdot(-K_X)$ and degree of a horizontal $1$-cycle $C^h$ be the number $\deg C^h=C^h\cdot F$.

\begin{Lemma}[\cite{Pukh123}]
Let $C$ be an irreducible curve on $X$ and let $P\in X$ be a nonsingular point such that $P\in C$. Then
\begin{enumerate}[(i)]
	\item if $C$ is horizontal then $\mult_P C \leqslant \deg C$,
	\item if $C$ lies in a fiber $F$, which does not contain a half-point then $\mult_P C \leqslant 2\deg C$.
\end{enumerate}
\end{Lemma}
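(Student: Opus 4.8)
The plan is to prove both inequalities by the same device: intersect $C$ with a suitable divisor through $P$, and compare the \emph{global} intersection number (which is read off from the degree) with the \emph{local} contribution at $P$ (which is bounded below by $\mult_P C$). Recall the two degree conventions: for a horizontal cycle $\deg C = C\cdot F$, while for a vertical cycle $\deg C = C\cdot(-K_X)$. For $C$ lying in a fibre $F$, adjunction gives $K_F=(K_X+F)\vert_F=K_X\vert_F$, since $F\vert_F\equiv 0$ (because $F\cdot F=\pi^{*}(\mathrm{pt})\cdot\pi^{*}(\mathrm{pt})=0$), so that $\deg C=(-K_F)\cdot C$. I will use two elementary local facts: at a \emph{smooth} point $P$ of a variety $W$, subvarieties meeting properly satisfy $(A\cdot B)_P\geqslant \mult_P A\cdot\mult_P B$; and on a normal surface $W$ this refines to $(A\cdot B)_P\geqslant \mult_P A\cdot\mult_P B/\mult_P W$.

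For part $(i)$ let $F_0=\pi^{-1}(\pi(P))$ be the fibre through $P$. Since $C$ is horizontal it is not contained in $F_0$, so $C$ and $F_0$ meet properly and, because all fibres are numerically equivalent,
\begin{align*}
\deg C=C\cdot F=C\cdot F_0=\sum_{Q\in C\cap F_0}(C\cdot F_0)_Q\geqslant (C\cdot F_0)_P\geqslant \mult_P C\cdot\mult_P F_0\geqslant \mult_P C,
\end{align*}
where the last steps use $\mult_P F_0\geqslant1$ and that $P$ is a nonsingular point of $X$ (so the intersection takes place at a smooth point of the threefold). This gives $(i)$.

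For part $(ii)$ the point $P$ is nonsingular on $X$ but may be a singular point of the surface $F$. Since $F$ contains no half-point, $X$ is smooth along $F$, so $F=\pi^{*}(\mathrm{pt})$ is a Cartier divisor, hence a Gorenstein del Pezzo surface of degree $2$; equivalently the anticanonical morphism realizes it as a double cover $r:F\to \MP^2$ (concretely $w^2=q_4(x,y,z)$ in $\MP(1_x,1_y,1_z,2_w)$). In particular $F$ has at worst du Val singularities and, being locally a double cover, satisfies $\mult_P F\leqslant 2$ at every point. The system $|-K_F|$ is base point free, so I may pick $\Gamma\in|-K_F|$ through $P$ — for instance $\Gamma=r^{-1}(\ell)$ for a general line $\ell\ni r(P)$ — with $C\not\subset\Gamma$; this is possible because the members through $P$ form a system of positive dimension whose base locus is finite, so only a proper subsystem can contain the irreducible curve $C$. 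Then $C$ and $\Gamma$ meet properly, $C\cdot\Gamma=(-K_F)\cdot C=\deg C$, and the normal-surface inequality at $P$ yields
\begin{align*}
\deg C=C\cdot\Gamma\geqslant (C\cdot\Gamma)_P\geqslant \frac{\mult_P C\cdot\mult_P\Gamma}{\mult_P F}\geqslant \frac{\mult_P C}{2},
\end{align*}
using $\mult_P\Gamma\geqslant1$ and $\mult_P F\leqslant2$. Hence $\mult_P C\leqslant 2\deg C$.

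The only delicate point, and the reason $(ii)$ carries the factor $2$ absent from $(i)$, is that $P$ need not be a smooth point of the fibre surface $F$. The whole argument hinges on controlling $\mult_P F$: one must know that a fibre with no half-point is Gorenstein with du Val singularities (equivalently, a double cover of $\MP^2$), so that $\mult_P F\leqslant2$, together with the local intersection inequality on a normal surface. For a smooth point of $F$ the same computation of course recovers the sharper bound $\mult_P C\leqslant\deg C$.
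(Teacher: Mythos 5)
Part (i) is fine: at the smooth point $P$ of the threefold, the local intersection of the curve $C$ with the Cartier divisor $F_0$ satisfies $(C\cdot F_0)_P\geqslant\mult_P C$, and summing nonnegative local terms gives $\mult_P C\leqslant C\cdot F_0=\deg C$. (The paper itself gives no proof of this lemma --- it is quoted from Pukhlikov --- so there is nothing internal to compare with.)

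Part (ii), however, rests on a false ``elementary local fact''. The inequality $(A\cdot B)_P\geqslant \mult_P A\cdot\mult_P B/\mult_P W$ on a normal surface $W$ fails already for du Val singularities, which are exactly the singularities of the Gorenstein fibres in question. Take $W=\{xy=z^3\}\subset\MC^3$, an $A_2$-point of multiplicity $2$, and let $A=\{x=z=0\}$, $B=\{y=z=0\}$: these are smooth curves on $W$ meeting only at the origin, $3A=\operatorname{div}(x)$ and $3B=\operatorname{div}(y)$, so
\begin{align*}
(A\cdot B)_0=\tfrac{1}{9}\dim_{\MC}\MC[x,y,z]\big/(xy-z^3,\,x,\,y)=\tfrac{1}{3}<\tfrac{1}{2}=\frac{\mult_0A\cdot\mult_0B}{\mult_0W}.
\end{align*}
Hence the step $(C\cdot\Gamma)_P\geqslant\mult_PC\cdot\mult_P\Gamma/\mult_PF$ is unjustified as written.

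The argument is repaired --- and in fact strengthened --- by using the one piece of structure you already invoked: $\Gamma\in|-K_F|$ is a \emph{Cartier} divisor on the Gorenstein surface $F$. If $g\in\mathfrak{m}_{F,P}$ is a local equation of $\Gamma$ and $\gamma_1,\dots,\gamma_r$ are the branches of $C$ at $P$ with multiplicities $m_1,\dots,m_r$, then $\gamma_i^*(\mathfrak{m}_{F,P})\subseteq(t^{m_i})$, whence
\begin{align*}
(C\cdot\Gamma)_P=\sum_i\operatorname{ord}_t\gamma_i^*(g)\geqslant\sum_i m_i=\mult_PC,
\end{align*}
with no division by $\mult_PF$ at all. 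This gives the stronger bound $\mult_PC\leqslant\deg C$ for an irreducible curve in a half-point-free fibre; the factor $2$ in the statement is slack here and is genuinely needed only for fibres containing a half-point, where half-lines have degree $\tfrac12$ (cf.\ Remark 4.12). The remaining ingredients of your proof --- that a general member of $|-K_F|$ through $P$ does not contain $C$, and that a half-point-free fibre is a reduced, irreducible Gorenstein del Pezzo surface of degree $2$ with $|-K_F|$ base point free --- are correct, though the latter is doing real work and deserves a justification in the setting of the paper.
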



\begin{Lemma}[\cite{Pukh123}]
The following holds for the degrees of $Z^h$ and $Z^v$
\begin{align*}
&\deg Z^h=2n^2 \quad and \quad  \sum_{t\in\MP^1} \deg Z_t^v\leqslant 4n^2\gamma.
\end{align*}
\end{Lemma}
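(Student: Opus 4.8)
The plan is to compute both degrees by pairing the class of $Z = D_1 \cdot D_2$ with appropriate divisors, exploiting the numerical class $\mathcal{M} \subset |-nK_X + \gamma n F|$. Recall that $D_1 \equiv D_2 \equiv -nK_X + \gamma n F$ as numerical classes, so $Z \equiv (-nK_X + \gamma n F)^2$. Since $F$ is a fiber, $F^2 \equiv 0$ and $F \cdot (\text{anything vertical}) = 0$, which will let the two computations separate cleanly.

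First I would compute $\deg Z^h = Z^h \cdot F$. Because the vertical components $Z^v_t$ lie in fibers, we have $Z^v_t \cdot F = 0$, so $Z \cdot F = Z^h \cdot F = \deg Z^h$. Expanding,
\begin{align*}
Z \cdot F = (-nK_X + \gamma n F)^2 \cdot F = n^2 (K_X^2 \cdot F) - 2\gamma n^2 (K_X \cdot F^2) + \gamma^2 n^2 F^3.
\end{align*}
The last two terms vanish since $F^2 \equiv 0$, leaving $\deg Z^h = n^2 (K_X^2 \cdot F)$. Now $K_X^2 \cdot F$ is computed on the general fiber: restricting to a smooth del Pezzo surface $S = F$ of degree $2$ we have $K_X|_F = K_S$ and $K_S^2 = 2$, so $K_X^2 \cdot F = 2$ and hence $\deg Z^h = 2n^2$, as claimed.

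For the vertical part, I would pair $Z$ with $-K_X$ and subtract the horizontal contribution. By definition $\sum_t \deg Z^v_t = \sum_t Z^v_t \cdot (-K_X) = (Z - Z^h)\cdot(-K_X)$. Computing $Z \cdot (-K_X) = (-nK_X + \gamma n F)^2 \cdot (-K_X) = -n^2 K_X^3 + 2\gamma n^2 (K_X^2 \cdot F) - \gamma^2 n^2 (K_X \cdot F^2)$; the last term is zero, giving $Z \cdot (-K_X) = -n^2 K_X^3 + 4\gamma n^2$. Meanwhile $Z^h \cdot (-K_X)$ together with the numerical relation forced by the $K^2$-condition must be controlled: since $K_X^2 \not\in \overline{NE}^\circ(X)$, one can bound the horizontal contribution so that the vertical degrees satisfy $\sum_t \deg Z^v_t \leqslant 4n^2\gamma$. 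The cleanest route is to write $Z^h \cdot (-K_X) \geqslant -n^2 K_X^3$, which is exactly the statement that $Z^h$ pairs with $-K_X$ at least as much as the ``expected'' horizontal class does, and then the inequality follows by subtraction.

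The main obstacle is the last step: justifying the bound $Z^h \cdot (-K_X) \geqslant -n^2 K_X^3$ (equivalently, that the vertical degrees absorb at most $4n^2\gamma$). This is where the $K^2$-condition enters essentially, rather than as a formal intersection-number identity. The point is that $K_X^2$ lying outside the interior of the Mori cone forces $Z^h$, as an effective horizontal cycle numerically comparable to $n^2(-K_X|_{\text{horizontal}})^2$, to have non-negative pairing with the appropriate boundary class; one argues that any deficit in the horizontal degree relative to its expected value would push $K_X^2$ into the interior of $\overline{NE}(X)$, contradicting the hypothesis. I would make this precise by writing $K_X^2 \equiv \alpha s + \beta f$ in the basis of extremal curve classes (as in the computation for $X_n$ in Lemma 3.15) and reading off the sign conditions that the $K^2$-condition imposes.
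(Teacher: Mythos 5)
Your computation of $\deg Z^h$ is correct and complete: vertical components pair trivially with $F$, so $\deg Z^h=Z\cdot F=(-nK_X+\gamma nF)^2\cdot F=n^2K_X^2\cdot F=2n^2$, using $F^2\equiv 0$ and adjunction on a general (smooth) fiber. Note that the paper gives no proof of this lemma at all --- it is cited from Pukhlikov --- so I am assessing your argument on its own merits.

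The vertical bound is where the gap is. The inequality you propose as the ``cleanest route,'' namely $Z^h\cdot(-K_X)\geqslant -n^2K_X^3$, is \emph{equivalent} to the conclusion $\sum_t\deg Z^v_t\leqslant 4n^2\gamma$ (each is obtained from the other by subtracting from $Z\cdot(-K_X)=-n^2K_X^3+4\gamma n^2$), so writing it down is a restatement, not a reduction; and since $-K_X$ need not be nef on $X$, there is no a priori reason an effective horizontal cycle should pair with it ``at least as much as expected.'' The mechanism your last paragraph gestures at, but does not execute, is the following. Since $\rho(X)=2$, every irreducible vertical curve lies in the one-dimensional subspace $F^{\perp}\subset N_1(X)$, so $\sum_t Z^v_t\equiv cf$ with $c\geqslant 0$, $\deg\sum_t Z^v_t=c\delta$ where $\delta=-K_X\cdot f$, and $-K_X\cdot F\equiv(2/\delta)f$; hence
\begin{align*}
Z^h\equiv n^2K_X^2+\Big(\frac{4\gamma n^2}{\delta}-c\Big)f.
\end{align*}
The $K^2$-condition provides a nonzero nef class $H$ with $K_X^2\cdot H\leqslant 0$; since $K_X^2\cdot F=2>0$, $H$ is not proportional to $F$, hence $H\cdot f>0$. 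Pairing the effective cycle $Z^h$ with $H$ gives $0\leqslant Z^h\cdot H\leqslant(4\gamma n^2/\delta-c)(f\cdot H)$, i.e. $c\delta\leqslant 4\gamma n^2$, which is the claim. Your phrasing that a deficit in horizontal degree ``would push $K_X^2$ into the interior of $\overline{NE}(X)$'' also inverts the logic: $K_X^2$ is a fixed class, and what a deficit would contradict is the effectivity of $Z^h$ (its $f$-coefficient would be forced negative against a nef class). So the skeleton is right and the first half is fine, but the single step where the $K^2$-condition actually does work is missing.
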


Now we need to find which pair and which point do we apply Corti inequality to. Let $F_i$ be the fibers containing the centers $P_i$ of $\nu_i$. Let $Z^v_i$ be the part of vertical cycle which is contained in $F_i$.

\begin{Lemma}
There are numbers $\gamma_i$ such that the pair $(X,\frac{1}{n}\mathcal{M}-\sum \gamma_i F_i)$ is strictly canonical at each $\nu_i$ and $\sum \gamma_i>\gamma$.
\end{Lemma}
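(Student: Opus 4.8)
The plan is to pick each $\gamma_i$ exactly so as to force the discrepancy of $\nu_i$ to drop to zero, and then read the inequality $\sum\gamma_i>\gamma$ straight off the hypothesis of case (ii) of Proposition 4.3. The whole argument rests on two facts: the discrepancy is affine-linear in the boundary, and the correction terms $F_i$ are decoupled across the distinct valuations $\nu_i$.

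First I would invoke the sign convention of Definitions 2.3 and 2.5, which give $a(\nu,X,D)=a(\nu,X,0)-\nu(D)$ for any $\mathds{Q}$-divisor $D$, so that the discrepancy is affine-linear in $D$. Applying this to $D=\frac{1}{n}\mathcal{M}-\sum_j\gamma_j F_j$ yields
\begin{align*}
a\Big(\nu_i,X,\tfrac{1}{n}\mathcal{M}-\sum_j\gamma_j F_j\Big)=a\Big(\nu_i,X,\tfrac{1}{n}\mathcal{M}\Big)+\sum_j\gamma_j\,\nu_i(F_j).
\end{align*}
The decisive observation is now that the centers $P_i$ of the $\nu_i$ lie in pairwise distinct fibers, by the first bullet of case (ii) of Proposition 4.3. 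Hence for $j\neq i$ the center $P_i$ is not contained in $F_j$, so $\nu_i(F_j)=0$, only the $j=i$ term survives, and the right-hand side collapses to $a(\nu_i,X,\frac{1}{n}\mathcal{M})+\gamma_i\,\nu_i(F_i)$.

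I would then set
\begin{align*}
\gamma_i=-\frac{a(\nu_i,X,\frac{1}{n}\mathcal{M})}{\nu_i(F_i)}.
\end{align*}
Since the pair $(X,\frac{1}{n}\mathcal{M})$ is not canonical at $\nu_i$ we have $a(\nu_i,X,\frac{1}{n}\mathcal{M})<0$, while $\nu_i(F_i)>0$, so each $\gamma_i>0$. With this choice the discrepancy at $\nu_i$ becomes exactly $0$, which is precisely the assertion that $(X,\frac{1}{n}\mathcal{M}-\sum_j\gamma_j F_j)$ is strictly canonical at each $\nu_i$. Summing and using the inequality of Proposition 4.3(ii) gives the remaining claim:
\begin{align*}
\sum_i\gamma_i=-\sum_i\frac{a(\nu_i,X,\frac{1}{n}\mathcal{M})}{\nu_i(F_i)}>\gamma.
\end{align*}

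There is no serious obstacle here; this is a bookkeeping step that repackages the inequality of case (ii) into the form needed for the subsequent application of the Corti inequality. The one structural input that genuinely matters is the decoupling $\nu_i(F_j)=0$ for $j\neq i$: it is what makes the clean formula $\gamma_i=-a(\nu_i,X,\frac{1}{n}\mathcal{M})/\nu_i(F_i)$ legitimate. Were the centers $P_i$ allowed to share a fiber, adjusting $\gamma_i$ for one valuation would perturb the discrepancies of the others, and one could not tune the coefficients independently.
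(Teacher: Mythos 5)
Your proof is correct and takes essentially the same route as the paper: the paper likewise sets $\gamma_i=-a(\nu_i,X,\frac{1}{n}\mathcal{M})/\nu_i(F_i)$, reads $\sum\gamma_i>\gamma$ off Proposition 4.3(ii), and declares strict canonicity obvious. Your only addition is to spell out the affine-linearity of the discrepancy and the vanishing $\nu_i(F_j)=0$ for $j\neq i$, which the paper leaves implicit.
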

\begin{proof}
Set $\gamma_i=-\frac{a(\nu_i, X, \frac{1}{n}\mathcal{M})}{\nu_i(F_i)}$, these numbers satisfy the inequality by the Proposition 4.3. The pair is obviously strictly canonical at every $\nu_i$.
\end{proof}

\begin{Cor}
There is an index $i$ such that 
\begin{align*}
\deg Z^v_i<4n^2\gamma_i.
\end{align*}
\end{Cor}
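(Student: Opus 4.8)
The plan is to combine the two degree estimates established just before this Corollary. From Lemma~4.7 I have the global bound on the vertical degree, $\sum_{t\in\MP^1}\deg Z_t^v\leqslant 4n^2\gamma$, and in particular $\sum_i \deg Z^v_i \leqslant \sum_{t}\deg Z^v_t \leqslant 4n^2\gamma$, since the $Z^v_i$ are among the vertical components $Z^v_t$ (each $F_i$ is one of the fibers $F_t$, and by Proposition~4.3(ii) the centers $P_i$ lie in distinct fibers, so the $Z^v_i$ are distinct summands). On the other hand, Lemma~4.8 gives numbers $\gamma_i$ with $\sum_i \gamma_i > \gamma$, hence
\begin{align*}
\sum_i 4n^2\gamma_i = 4n^2\sum_i\gamma_i > 4n^2\gamma \geqslant \sum_i \deg Z^v_i.
\end{align*}

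From the strict inequality $\sum_i \deg Z^v_i < \sum_i 4n^2\gamma_i$ the conclusion is immediate by a pigeonhole argument: if we had $\deg Z^v_i \geqslant 4n^2\gamma_i$ for every index $i$, then summing over $i$ would yield $\sum_i \deg Z^v_i \geqslant \sum_i 4n^2\gamma_i$, contradicting the displayed strict inequality. Therefore there must exist at least one index $i$ with $\deg Z^v_i < 4n^2\gamma_i$, which is exactly the assertion.

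I do not expect any genuine obstacle here; the Corollary is a one-line averaging consequence of the two preceding lemmas. The only point requiring a little care is making sure the indexing matches: I should confirm that each $Z^v_i$ really is one of the terms $Z^v_t$ appearing in Lemma~4.7, so that $\sum_i \deg Z^v_i$ is bounded by the full sum $\sum_t \deg Z^v_t$. This is guaranteed because the fibers $F_i$ are pairwise distinct (the points $P_i$ lie in different fibers by Proposition~4.3(ii)), so the vertical components $Z^v_i$ are distinct among the $Z^v_t$ and their degrees, all nonnegative, sum to at most the total vertical degree $4n^2\gamma$.
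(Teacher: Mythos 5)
Your proof is correct and is essentially identical to the paper's: both combine Lemma~4.7 and Lemma~4.8 into the chain $\sum_i \deg Z^v_i \leqslant 4n^2\gamma < 4n^2\sum_i\gamma_i$ and conclude by pigeonhole. Your extra remark checking that the $Z^v_i$ are distinct summands of the total vertical degree is a detail the paper leaves implicit, not a different argument.
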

\begin{proof}
By Lemma 4.7 and Lemma 4.8 we have
\begin{align*}
\sum \deg Z^v_i\leqslant 4n^2\gamma <4n^2 \sum \gamma_i.
\end{align*}
If inequality holds for the sums, then it must hold for at least one $i$.
\end{proof}

We say that $\nu_i$ a \emph{supermaximal singularity} if $\deg Z_i^v<4n^2\gamma_i$. Fix a supermaximal singularity $\nu_i$. To simplify the notations, from now on denote $F_i$ as $F$, $\gamma_i$ as $\gamma$, $P_i$ as $P$, $Z^v_i$ as $Z_v$, $Z^h$ as $Z_h$ and $\nu_i$ as $\nu$. 

\begin{Prop}[\cite{Pukh123}]
Suppose a fiber $F$ does not contain a singular point of the type $\frac{1}{2}(1,1,1)$, then there are no supermaximal singularities with a center on $F$.
\end{Prop}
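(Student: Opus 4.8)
The plan is to argue by contradiction, exploiting that a fiber free of half-points lies entirely in the smooth locus of $X$. Assume a supermaximal singularity $\nu$ has its center on $F$. By Proposition 4.4 its center may be taken to be a smooth point $P$, and since $F$ contains no half-point while these are the only singularities of $X$, the whole of $F$ is smooth in $X$; thus $F$ is a normal Cartier divisor through $P$, and being a Gorenstein del Pezzo surface of degree $2$ it has at worst Du Val singularities. As $\nu$ is one of the non-canonical valuations of Proposition 4.3(ii), we have $a\big(\nu,X,\frac{1}{n}\mathcal{M}\big)<0$, and together with $\nu(F)\geqslant 1$ this gives $a\big(\nu,X,F+\frac{1}{n}\mathcal{M}\big)<-1$, so the pair $\big(X,F+\frac{1}{n}\mathcal{M}\big)$ is not log canonical at $P$.

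It is worth noting first why the direct route fails. Applying the Corti inequality (Proposition 2.7) at $P$ with the single surface $F$, $\alpha=\gamma$ and $\mult_P F=1$ gives $\mult_P Z_h+t\,\mult_P Z_v\geqslant 4n^2(1+\gamma t)$ for some $0<t\leqslant 1$; feeding in $\mult_P Z_h\leqslant 2n^2$ (Lemma 4.6(i), Lemma 4.7), $\mult_P Z_v\leqslant 2\deg Z_v$ (Lemma 4.6(ii)) and the supermaximal bound $\deg Z_v<4n^2\gamma$ only yields $\gamma t>\frac{1}{2}$, which is no contradiction. The reason is that the fiber degree $Z_h\cdot F$ grows with $\gamma$; to get an absolute bound one must instead restrict to $F$ itself.

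So the real step is to transfer the non–log-canonicity to the fiber by Inversion of Adjunction (\cite[Theorem~17.7]{Kollar}) along $F$, exactly as in the proof of Proposition 3.13, concluding that $\big(F,\frac{1}{n}\mathcal{M}\vert_F\big)$ is not log canonical at $P$. Adjunction gives $-K_X\vert_F\equiv -K_F$ and $F\vert_F\equiv 0$, so $\mathcal{M}\vert_F\subset\big|-nK_F\big|$; splitting off the fixed part $\Phi$, the mobile part $\mathcal{L}$ satisfies $\mathcal{L}^2\leqslant\mathcal{L}\cdot(-nK_F)\leqslant(-nK_F)^2=2n^2$. Now the two-dimensional Corti inequality applies on the surface $F$: if $\Phi$ avoids $P$, then $\big(F,\frac{1}{n}\mathcal{L}\big)$ is already non-canonical at $P$ and the $4n^2$-inequality forces $\mult_P(L_1\cdot L_2)\geqslant 4n^2$, which contradicts $\mult_P(L_1\cdot L_2)\leqslant\mathcal{L}^2\leqslant 2n^2$ because $4>2=(-K_F)^2$. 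The case where components of $\Phi$ pass through $P$ is then treated by Lemma 2.6, accounting for those components, and bounding them through the supermaximal hypothesis: they are vertical curves contributing to $Z_v$, constrained by $\deg Z_v<4n^2\gamma$ (Lemma 4.9) with multiplicities obeying $\mult_P C\leqslant 2\deg C$ (Lemma 4.6(ii)) — precisely the estimate that is available because $F$ has no half-point.

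The hard part will be this last bookkeeping: showing that the fixed part $\Phi$ and its coupling with $\mathcal{L}$ cannot bridge the gap between the $4n^2$ produced by the surface inequality and the $2n^2$ of available self-intersection on the degree-$2$ fiber. Conceptually the inequality $4>(-K_F)^2$ is exactly what makes degrees $1$ and $2$ rigid, and it is the failure of Lemma 4.6(ii) on a bad fiber — where the ruling curves of the quartic cone have $-K_X\cdot L=\frac{1}{2}$ and may be badly singular at the half-point — that allows supermaximal singularities to persist there, which is the genuinely new difficulty handled in the later sections. A secondary technical point is the case in which $P$ is a Du Val rather than a smooth point of $F$, which requires the corresponding version of the planar inequality.
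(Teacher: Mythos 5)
Your proposal diverges from the paper at the first step: the paper's proof of this proposition \emph{is} the ``direct route'' you dismiss. It applies the three-dimensional Corti inequality at $P$ to the pair $(X,\frac{1}{n}\mathcal{M}-\gamma F)$, which is strictly canonical at $\nu$, to get $\mult_P Z_h+t\,\mult_P Z_v\geqslant 4n^2(1+\gamma t)$, and contradicts this with $\mult_P Z_h\leqslant \deg Z_h=2n^2$ together with $\mult_P Z_v<4n^2\gamma$. Your objection --- that Lemma 4.6(ii) only yields $\mult_P Z_v\leqslant 2\deg Z_v<8n^2\gamma$ and hence the inconclusive $t\gamma>\frac12$ --- is a fair reading of Lemma 4.6(ii) as stated, but the bound the argument actually relies on for a half-point-free fiber is $\mult_P C\leqslant\deg C$ for irreducible vertical curves: on a smooth degree-$2$ fiber this follows from adjunction and the Hodge index theorem, since $\binom{m}{2}\leqslant p_a(C)$ and $p_a(C)\leqslant\frac{d^2-2d+4}{4}$ force $m\leqslant d$. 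The relaxed factor $2$ is what one is reduced to on a fiber containing a half-point (half-lines of degree $\frac12$), which is exactly why Remark 4.12 says the contradiction evaporates there and why the later sections are needed. So the correct move was to sharpen the multiplicity bound on a good fiber (the content of the cited result of Pukhlikov), not to abandon the route.

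The substitute you offer is not a complete proof. The reduction by Inversion of Adjunction to $(F,\frac1n\mathcal{M}\vert_F)$ and the estimate $\mathcal{L}^2\leqslant(-nK_F)^2=2n^2<4n^2$ for the mobile part are fine as far as they go, but the entire difficulty is then concentrated in the fixed part $\Phi$ of $\mathcal{M}\vert_F$ and its coupling with $\mathcal{L}$ at $P$, and you explicitly leave this open. That is precisely where supermaximality and the bound $\deg Z_v<4n^2\gamma$ would have to enter; without that step the argument establishes nothing, since a fixed vertical curve through $P$ with multiplicity close to $n$ in $\mathcal{M}$ can a priori bridge the gap between $2n^2$ and $4n^2$ unless its degree is controlled. (The case where $P$ is a singular point of the surface $F$ is likewise flagged but not treated.) As it stands the proposal identifies reasonable objects but stops short of the inequality that closes the argument, whereas the paper's own count, with the correct multiplicity bound for vertical curves on a good fiber, is short and complete.
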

\begin{proof}
The pair $(X,\frac{1}{n}\mathcal{M}-\gamma F)$ is strictly canonical at the point $P$. Hence by Corti inequality there is a number $0 < t \leqslant 1$ such that 
\begin{align*}
\mult_P Z_h+t \mult Z_v\geqslant 4n^2(1+\gamma t \mult_P F)\geqslant 4(1+\gamma t)n^2.
\end{align*}
On the other hand Lemma 4.7 implies
\begin{align*}
\mult_P Z_h+t \mult Z_v<2n^2+4t\gamma n^2,
\end{align*}
contradiction.
\end{proof}

\begin{Cor}[\cite{Pukh123}]
Let $X$ be a smooth del Pezzo fibration of degree $2$. Suppose $X$ satisfies $K^2$-condition. Then $X$ is birationally rigid.
\end{Cor}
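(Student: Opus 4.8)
The plan is to run Pukhlikov's untwisting argument, now that every ingredient specialized to the smooth degree-$2$ case is in place, and to force termination by descent on the degree $n$. So let $f\colon X\dashrightarrow V$ be a birational map to a Mori fiber space $\bar\pi\colon V\to Z$; I must show that $Z=\MP^1$, that $f$ fits into the commutative square of Definition 4.1, and that the general fibers of $\pi$ and $\bar\pi$ are isomorphic. Starting from $f$, I would build a sequence of maps $g_j\colon X\dashrightarrow V$ to the \emph{same} Mori fiber space $V$, each differing from the previous one by a fiberwise birational self-involution of $X$, and argue that the sequence stabilizes to a fiberwise map.

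First I would describe one step of the construction. If $g_j$ is already fiberwise to a del Pezzo fibration over $\MP^1$, I stop. Otherwise the hypothesis of Proposition 4.3 holds for $g_j$, and it produces a mobile linear system $\mathcal{M}\subset\bigl|-nK_X+\gamma n F\bigr|$ together with either case (i) or case (ii). I would rule out case (ii) at once: since $X$ is smooth, no fiber contains a half-point, so Corollary 4.9 produces a supermaximal singularity on some fiber $F$, while Proposition 4.11 forbids a supermaximal singularity on any fiber free of half-points, a contradiction. Hence only case (i) can occur, so $(X,\tfrac1n\mathcal{M})$ is non-canonical along a curve $C$. By Proposition 4.4 the pair is canonical at every curve that is not a section of $\pi$, whence $C$ must be a section; Proposition 4.5 then supplies a fiberwise involution $\chi_C$ with $\chi_C(\mathcal{M})\subset\bigl|-n'K_X+n'\gamma' F\bigr|$ and $n'<n$. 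I set $g_{j+1}=g_j\circ\chi_C$.

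The descent then closes the argument. The degrees form a strictly decreasing sequence $n=n(g_0)>n(g_1)>\cdots$ of positive integers, so the process must terminate after finitely many steps at some $g_k$. As long as $\bar\pi\colon V\to Z$ is not a del Pezzo fibration over $\MP^1$, or $g_k$ is not fiberwise, Proposition 4.3 applies and, case (ii) being vacuous, forces a further untwisting and a strict drop of $n$; since no infinite strictly descending chain of positive integers exists, termination can only mean that $\bar\pi\colon V\to Z$ \emph{is} a del Pezzo fibration over $Z=\MP^1$ and $g_k$ is fiberwise. In particular this rules out $V$ being a Fano threefold or a conic bundle, since for such targets $g_j$ could never become fiberwise to a del Pezzo fibration and the descent would never stop.

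Finally I would compose the involutions back. Each $\chi_{C_i}$ is a birational self-map of $X$ commuting with $\pi$ and restricting to an isomorphism on the generic fiber, so $f=g_k\circ\chi_{C_k}\circ\cdots\circ\chi_{C_1}$ (the $\chi_{C_i}$ being involutions) again carries fibers of $\pi$ into fibers of $\bar\pi$ over the identity of $\MP^1$ and induces an isomorphism of general fibers. Hence $f$ satisfies the requirements of Definition 4.1, and $X$ is birationally rigid. I expect the one genuinely delicate point to be the bookkeeping of the descent: one must verify that $n$ truly decreases at each untwisting while the target $V$ remains fixed, and that case (ii) is genuinely empty, which is exactly where smoothness enters through the absence of half-points in Proposition 4.11.
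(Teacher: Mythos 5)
Your proposal is correct and follows essentially the same route as the paper: the corollary is exactly the assembly of Proposition 4.3, the untwisting of sections via Propositions 4.4--4.5 with descent on $n$, and the exclusion of supermaximal singularities by Proposition 4.10 (which you cite as 4.11), where smoothness enters precisely through the absence of half-points in every fiber. The only nitpick is that Definition 4.1 asks merely that the general fibers be isomorphic, which is supplied by the terminal fiberwise map $g_k$; the composite $f$ itself need not restrict to an isomorphism of generic fibers since the involutions $\chi_C$ are only birational on fibers.
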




\begin{Remark}
Note that there are difficulties when $F$ contains the half-point. There will be ``half-line'': curves of degree $\frac{1}{2}$. Thus the bound on multiplicity becomes $\mult_P Z_v\leqslant 8\gamma n^2$ and it no longer contradicts Corti inequality. In the next sections we work on a way around this problem.
\end{Remark}

%
%
\section{Construction of the ladder}
Let $X^{(0)}$ be a threefold and let $F^{(0)}$ be a smooth surface on it. Suppose $L_0\subset F^{(0)}$ is a smooth rational curve. Let us associate the following construction to $L_0$ which we call the \emph{ladder}. 

Let $\sigma_i:X^{(i)}\to X^{(i-1)}$ be the blow up of $L_{i-1}$ and let $E^{(i)}$ be its exceptional divisor. Clearly $E^{(i)}\cong\mathds{F}_m$ for some $m$, suppose $m>0$ for every $i$. Let $L_i$ be the exceptional section of $E^{(i)}$. Denote the proper transform of $E^{(i)}$ on $X^{(j)}$, as $E^{(i,j)}$ and the proper transform of $F^{(0)}$ on $X^{(j)}$ as $F^{(j)}$.

\begin{Th}
Suppose $L_0$ is a smooth rational curve and suppose $X^{(0)}$ is smooth in the neighborhood of $L_0$. Also assume that $L_0\cdot K_{X^{(0)}}=0$ and $L_0 \cdot F^{(0)}=-2$. Then the following assertions are true for the ladder associated to $L_0$ for all $i>0$:
\begin{enumerate}[(i)]
	\item $E^{(i)}\cong \mathds{F}_2$,
	\item $\sigma_i^*(L_{i-1})\equiv L_{i}$,
	\item $\nu_{E^{(i)}}(F^{(0)})=i$,
	\item $E^{(i,i+1)}\vert_{E^{(i+1)}}$ is disjoint from $L_{i+1}$, in particular the graph associated to the ladder is a simple chain.
\end{enumerate}
\end{Th}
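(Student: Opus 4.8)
The plan is to prove all four assertions simultaneously by induction on $i$, carrying along a package of auxiliary facts stronger than the statement. At each stage I maintain that $L_i$ is a smooth rational curve along which $X^{(i)}$ is smooth, that $K_{X^{(i)}}\cdot L_i=0$ and $F^{(i)}\cdot L_i=-2$, that $L_i$ is exactly the negative section of $E^{(i)}\cong\mathds{F}_2$, that $L_i=F^{(i)}\cap E^{(i)}\subset F^{(i)}$ with $F^{(i)}$ smooth along $L_i$, and that $E^{(i)}\vert_{E^{(i)}}\equiv-(L_i+2f)$. The base of the induction is provided by the hypotheses $K_{X^{(0)}}\cdot L_0=0$, $F^{(0)}\cdot L_0=-2$ together with $L_0\subset F^{(0)}$.

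To obtain $E^{(i)}\cong\mathds{F}_2$ I first compute $N_{L_{i-1}/X^{(i-1)}}$. By Lemma 2.8(vii), the rationality of $L_{i-1}$, and $K_{X^{(i-1)}}\cdot L_{i-1}=0$, its degree is $2\cdot 0-2-0=-2$. The normal bundle sequence for $L_{i-1}$ inside the smooth surface containing it (namely $F^{(0)}$ when $i=1$, and $E^{(i-1)}$ when $i\geqslant 2$), combined with $F^{(0)}\cdot L_0=-2$ (resp. $E^{(i-1)}\cdot L_{i-1}=0$), forces the extension $0\to\mathcal{O}(-2)\to N_{L_{i-1}/X^{(i-1)}}\to\mathcal{O}\to 0$ (or the analogous one). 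Over $\MP^1$ this bundle splits, and the only two numerical possibilities are $\mathcal{O}\oplus\mathcal{O}(-2)$ and $\mathcal{O}(-1)^{\oplus 2}$, giving $\mathds{F}_2$ and $\mathds{F}_0$ respectively. Here I invoke the standing hypothesis $m>0$ to discard $\mathds{F}_0$; \emph{this is the one place that assumption is genuinely used}, and it yields $N_{L_{i-1}/X^{(i-1)}}\cong\mathcal{O}\oplus\mathcal{O}(-2)$, hence (i), with $L_i$ the negative section. The self-restriction $E^{(i)}\vert_{E^{(i)}}\equiv-(L_i+2f)$ is then read off from Lemma 2.8(iii) (giving $E^{(i)}\cdot f=-1$) and Lemma 2.8(ii) (giving $(E^{(i)})^{3}=2$) by solving two linear equations in the basis $L_i,f$ of $\mathds{F}_2$; in particular $E^{(i)}\cdot L_i=0$, whence $K_{X^{(i)}}\cdot L_i=0$ follows from $K_{X^{(i)}}=\sigma_i^*K_{X^{(i-1)}}+E^{(i)}$ and the projection formula.

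Assertion (ii) is then immediate from Lemma 2.8(i): $(E^{(i)})^{2}\equiv-\sigma_i^*(L_{i-1})+\deg(N)\,f=-\sigma_i^*(L_{i-1})-2f$, while pushing $E^{(i)}\vert_{E^{(i)}}$ into $X^{(i)}$ gives $(E^{(i)})^{2}\equiv-(L_i+2f)$; comparing the two yields $\sigma_i^*(L_{i-1})\equiv L_i$. For (iv) I use that the proper transform of a smooth surface $G$ containing the centre $L_{i-1}$ meets $E^{(i)}$ along the section $\MP(N_{L_{i-1}/G})\subset\MP(N_{L_{i-1}/X^{(i-1)}})$, whose self-intersection in $\mathds{F}_2$ equals $\deg N_{L_{i-1}/X^{(i-1)}}-2\deg N_{L_{i-1}/G}=-2-2\deg N_{L_{i-1}/G}$. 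Taking $G=E^{(i-1)}$ gives $\deg N_{L_{i-1}/E^{(i-1)}}=L_{i-1}^2=-2$, so $E^{(i-1,i)}$ cuts out the $(+2)$-section, which is disjoint from the negative section $L_i$; this is precisely (iv). Taking $G=F^{(i-1)}$ gives $\deg N_{L_{i-1}/F^{(i-1)}}=0$ (from $F^{(i-1)}\cdot L_{i-1}=-2$), so $F^{(i)}$ cuts out the negative section, i.e. $L_i=F^{(i)}\cap E^{(i)}$. Since blowing up the Cartier divisor $L_{i-1}\subset F^{(i-1)}$ leaves $F^{(i)}\cong F^{(i-1)}$ smooth, this recovers $L_i\subset F^{(i)}$, and $F^{(i)}=\sigma_i^*F^{(i-1)}-E^{(i)}$ together with $\sigma_{i*}L_i=L_{i-1}$ gives $F^{(i)}\cdot L_i=F^{(i-1)}\cdot L_{i-1}-0=-2$, closing the induction.

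Finally (iii) follows by accumulating multiplicities along the chain. Writing the total transform of $F^{(0)}$ on $X^{(i-1)}$ and applying $\sigma_i^*$, the coefficient of $E^{(i)}$ equals $\mult_{L_{i-1}}F^{(i-1)}+\sum_{j}a_j\,\mult_{L_{i-1}}E^{(i-1,j)}$ with $a_j=\nu_{E^{(j)}}(F^{(0)})$. Because the dual graph is a simple chain and $L_{i-1}$ is disjoint from $E^{(i-2,i-1)}\vert_{E^{(i-1)}}$ by (iv), the only surviving terms are $\mult_{L_{i-1}}F^{(i-1)}=1$ and $a_{i-1}\,\mult_{L_{i-1}}E^{(i-1)}=a_{i-1}$, so $a_i=a_{i-1}+1$; since $a_1=\mult_{L_0}F^{(0)}=1$, we conclude $\nu_{E^{(i)}}(F^{(0)})=i$. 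The main obstacle is the geometric bookkeeping of the third paragraph: correctly identifying which section of each $\mathds{F}_2$ is cut out by $F^{(i)}$ and which by the previous exceptional. Everything else—the chain structure, the multiplicity recursion, and the intersection computations—rests on the fact that $F^{(i)}$ meets $E^{(i)}$ along the negative section $L_i$ while $E^{(i-1,i)}$ meets it along the disjoint $(+2)$-section.
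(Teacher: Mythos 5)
Your proof is correct and follows the same basic strategy as the paper: an induction in which $E^{(i)}\cong\mathds{F}_2$ is extracted from the normal bundle sequence of $L_{i-1}$ in an auxiliary smooth surface, (ii) is read off by comparing the two expressions for $\big(E^{(i)}\big)^2$ from Lemma 2.8, and (iii)--(iv) follow from the multiplicity recursion and the identification of $E^{(i-1,i)}\vert_{E^{(i)}}$ with the positive section. The one genuine divergence is in the inductive step for (i): you take the auxiliary surface to be $E^{(i-1)}$, whose sequence $0\to\mathcal{O}(-2)\to N_{L_{i-1}/X^{(i-1)}}\to\mathcal{O}\to 0$ is also satisfied by $\mathcal{O}(-1)^{\oplus 2}$, and you then appeal to the standing hypothesis $m>0$ from the ladder construction to discard $\mathds{F}_0$. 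That is admissible for the theorem as literally stated, but it is weaker than what the paper proves: Lemma 5.2 reuses $F^{(i-1)}$ (via the invariant $F^{(i-1)}\cdot L_{i-1}=-2$ carried through Lemma 5.3), so the quotient in the normal bundle sequence is $\mathcal{O}(-2)$, which forces a summand $\mathcal{O}(a)$ with $a\leqslant -2$ and hence $N\cong\mathcal{O}\oplus\mathcal{O}(-2)$ outright — i.e.\ $m>0$ is automatic, which is what one actually wants when invoking the ladder in Sections 6--7. Since your induction package already maintains that $F^{(i-1)}$ is smooth along $L_{i-1}$ with $F^{(i-1)}\cdot L_{i-1}=-2$, you can close this loophole at no cost by running your own normal-bundle argument with $G=F^{(i-1)}$ instead of $G=E^{(i-1)}$ for all $i$. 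A side benefit of your $\MP(N_{L/G})$ bookkeeping is that it identifies $F^{(i)}\cap E^{(i)}$ as the negative section $L_i$ explicitly, which the paper only uses implicitly.
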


\begin{Lemma}
Let $\sigma:\widetilde{X}\to X$ be the blow up of a smooth rational curve $L$ and let $E$ be the exceptional divisor of $\sigma$. Suppose $X$ is smooth in the neighborhood of $L$ and suppose $L\cdot K_X=0$. Suppose also that there is a smooth surface $F$ such that $L\cdot F=-2$, then $E\cong \mathds{F}_2$.
\end{Lemma}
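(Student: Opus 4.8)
The plan is to compute the exceptional divisor $E$ of the blow-up $\sigma:\widetilde X\to X$ as the Hirzebruch surface $\mathbb{F}_m$ determined by the normal bundle $N_{L/X}$, and to pin down $m$ using the numerical hypotheses $L\cdot K_X=0$ and the existence of a smooth surface $F$ with $L\cdot F=-2$. By Lemma 2.8(i), since $X$ is smooth near $L$ and $L\cong\mathbb{P}^1$, the exceptional divisor $E$ is the projectivization $\mathbb{P}(N_{L/X})$ of a rank-two bundle on $\mathbb{P}^1$, which splits as $\mathcal{O}(a)\oplus\mathcal{O}(b)$; hence $E\cong\mathbb{F}_m$ with $m=|a-b|$. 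Since $\deg N_{L/X}=a+b$, Lemma 2.8(vii) gives $\deg N_{L/X}=2g(L)-2-K_X\cdot L=-2-0=-2$, so $a+b=-2$.

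The key remaining step is to extract $a-b$ from the surface $F$. First I would observe that $F$ must contain $L$: if $L\not\subset F$ then $L\cdot F$ would be a nonnegative intersection number, contradicting $L\cdot F=-2$. Since $F$ is smooth and contains the smooth rational curve $L$, the normal bundle $N_{L/F}$ is a line subbundle of $N_{L/X}$, and $\deg N_{L/F}=L^2_F=L\cdot F=-2$ (the self-intersection of $L$ inside $F$ equals $L\cdot F$ because $F$ is smooth along $L$). The inclusion $N_{L/F}\hookrightarrow N_{L/X}$ exhibits the proper transform $\widetilde F\cap E$ as a section of the ruled surface $E$; its class is the section corresponding to the quotient $N_{L/X}/N_{L/F}$ whose degree is $\deg N_{L/X}-\deg N_{L/F}=-2-(-2)=0$. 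So the two distinguished sections have self-intersections summing to $-2$, with one of them (coming from $N_{L/F}$) having self-intersection comparable to $-2$.

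To nail $m$ cleanly I would argue via the negative section directly. Writing $N_{L/X}\cong\mathcal{O}(a)\oplus\mathcal{O}(b)$ with $a\geqslant b$, the exceptional section $C_0$ of $E\cong\mathbb{F}_m$ has self-intersection $-m=b-a$. The proper transform $\widetilde F$ meets $E$ along a section of $E$, and by Lemma 2.8 together with the projection formula one computes the self-intersection of this section inside $E$ from the data $L\cdot F=-2$; matching this against the self-intersections $\pm m$ of the two rulings' sections forces $m=2$. Concretely, $N_{L/F}=\mathcal{O}(-2)$ is a subbundle, so $b\leqslant -2$; combined with $a+b=-2$ and $a\geqslant b$ this yields $a\geqslant 0$, hence $m=a-b\geqslant 2$, and the splitting $\mathcal{O}(0)\oplus\mathcal{O}(-2)$ (i.e. $m=2$) is the only option consistent with $\widetilde F\cap E$ being a genuine section.

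The main obstacle I anticipate is the last implication: showing that $m$ cannot exceed $2$, i.e. ruling out more unbalanced splittings such as $\mathcal{O}(1)\oplus\mathcal{O}(-3)$. The degrees alone give only $a+b=-2$ and the subbundle inequality $b\leqslant -2$, which would a priori permit larger $m$. The resolution must use that $\widetilde F$ is the \emph{proper transform of a smooth surface}: the section $\widetilde F\cap E$ is cut out by the subbundle $N_{L/F}\subset N_{L/X}$, and because $F$ is smooth this subbundle is a direct summand (not merely a subsheaf), forcing $N_{L/X}\cong N_{L/F}\oplus(N_{L/X}/N_{L/F})\cong\mathcal{O}(-2)\oplus\mathcal{O}(0)$ and hence $m=2$ exactly. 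Establishing that smoothness of $F$ makes $N_{L/F}$ a direct summand—equivalently that the section $\widetilde F\cap E$ avoids the negative section, as recorded in Theorem 5.1(iv)—is the crux of the argument.
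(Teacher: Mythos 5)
Your overall strategy --- compute $\deg N_{L/X}=-2$ from Lemma 2.8(vii) and then use the surface $F$ via the normal bundle sequence to pin down the splitting type of $N_{L/X}$ --- is the same as the paper's, and the first half of your argument is fine. But there is a genuine error at the key step: you set $\deg N_{L/F}=(L^2)_F=L\cdot F=-2$. This is false: the threefold intersection number $L\cdot F$ computes $\deg\big(\mathcal{O}_X(F)\vert_L\big)=\deg\big(N_{F/X}\vert_L\big)$, not the self-intersection of $L$ inside $F$. (In the paper's application $F$ is a ruled surface and $L$ a fiber of its ruling, so $(L^2)_F=0$ while $L\cdot F=-2$.) Consequently, in the exact sequence $0\to N_{L/F}\to N_{L/X}\to N_{F/X}\vert_L\to 0$ the degree $-2$ piece is the \emph{quotient}, and the sub-bundle $N_{L/F}$ has degree $-2-(-2)=0$, i.e.\ $N_{L/F}\cong\mathcal{O}_L$ --- exactly the opposite of what you wrote.

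The swap is not cosmetic. With your orientation ($\mathcal{O}_L(-2)$ as sub-bundle, $\mathcal{O}_L$ as quotient) one has $\operatorname{Ext}^1(\mathcal{O}_L,\mathcal{O}_L(-2))=H^1(\mathcal{O}_L(-2))\neq 0$, and the non-split extension has middle term $\mathcal{O}_L(-1)\oplus\mathcal{O}_L(-1)$: indeed $\mathcal{O}_L(-2)$ does embed as a saturated sub-bundle of $\mathcal{O}_L(-1)^{\oplus 2}$ with quotient $\mathcal{O}_L$, so your constraints do not exclude $E\cong\mathds{F}_0$. In particular your deduction ``$N_{L/F}=\mathcal{O}(-2)$ is a subbundle, so $b\leqslant -2$'' does not follow, and your fallback --- that smoothness of $F$ makes $N_{L/F}$ a direct summand --- is unjustified: smoothness only gives that $N_{L/F}$ is a sub-bundle with locally free quotient, not a summand. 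With the correct orientation the difficulty evaporates: a surjection $\mathcal{O}(a)\oplus\mathcal{O}(b)\to\mathcal{O}_L(-2)$ forces $\min(a,b)\leqslant -2$, and a nowhere-vanishing section $\mathcal{O}_L\hookrightarrow\mathcal{O}(a)\oplus\mathcal{O}(b)$ must then project trivially to the negative summand and trivialize the other, so $\{a,b\}=\{-2,0\}$ (equivalently, $\operatorname{Ext}^1(\mathcal{O}_L(-2),\mathcal{O}_L)=0$ and the sequence splits). This is in substance the paper's proof of Lemma 5.2.
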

\begin{proof}
By Lemma 2.8
\begin{align}
\deg N_{L/X}=2g(L)-2-K_{X}\cdot L=-2.
\end{align}
The equality $F\cdot L=-2$ implies 
\begin{align}
N_{F/X}\vert_L=\mathcal{O}_L(-2).
\end{align}

There is an exact sequence of normal sheaves
\begin{align*}
0\to N_{L/F} \to N_{L/X} \to (N_{F/X})\big\vert_{L} \to 0.
\end{align*}
Clearly $N_{L/X}=\mathcal{O}_{L}(a)\oplus\mathcal{O}_{L}(b)$ for some $a$ and $b$, and $(i)$ implies that $a+b=-2$. Without loss of generality we may assume that $a\leqslant b$. The inequality $a\leqslant-2$ follows from $(2)$. On the other hand $(1)$, $(2)$, and the exact sequence imply $N_{L/F}=\mathcal{O}_L$, therefore $b\geqslant 0$. Hence $a=-2$ and $b=0$.
Thus $E=\ON{Proj} \big( \mathcal{O}_{L}(-2)\oplus\mathcal{O}_{L} \big)\cong \mathds{F}_2$.
\end{proof}

\begin{Lemma}
Let $f_i,s_i\in A^2(X^{(i)})$ be the classes of a fiber and of the exceptional section of a ruled surface $E^{(i)}$ respectively. Suppose that $E^{(i)}\cong \mathds{F}_2$, $K_{X^{(i-1)}}\cdot L_{i-1}=0$, and $F^{(i-1)}\cdot L_{i-1}=-2$.
Then:
\begin{enumerate}[(i)]
	\item $E^{(i+1)} \cong\mathds{F}_2$,
	\item $\sigma_i^*(L_{i-1})\equiv L_{i}$,
	\item $K_{X^{(i)}}\cdot L_{i}=0$,
	\item $F^{(i)}\cdot L_i=-2$.
\end{enumerate}
\end{Lemma}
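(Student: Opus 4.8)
The plan is to treat this as the inductive engine behind Theorem 5.1: assuming the three hypotheses at level $i-1$, I would establish the four conclusions at level $i$ in the order (ii), (iii), (iv), (i), since the first three feed Lemma 5.2 to produce (i). First I would record two preliminary facts. Since $L_{i-1}$ is a smooth rational curve with $K_{X^{(i-1)}}\cdot L_{i-1}=0$, Lemma 2.8(vii) gives $\deg N_{L_{i-1}/X^{(i-1)}}=2g(L_{i-1})-2-K_{X^{(i-1)}}\cdot L_{i-1}=-2$. Moreover, exactly as in the proof of Lemma 5.2, the sequence $0\to N_{L_{i-1}/F^{(i-1)}}\to N_{L_{i-1}/X^{(i-1)}}\to N_{F^{(i-1)}/X^{(i-1)}}|_{L_{i-1}}\to 0$ together with $F^{(i-1)}\cdot L_{i-1}=-2$ forces $N_{L_{i-1}/F^{(i-1)}}=\mathcal{O}_{L_{i-1}}$; in particular $L_{i-1}$ is a smooth Cartier divisor in the smooth surface $F^{(i-1)}$, so its proper transform $F^{(i)}$ is isomorphic to $F^{(i-1)}$ (hence smooth) and $F^{(i)}=\sigma_i^*(F^{(i-1)})-E^{(i)}$ with multiplicity one.

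For (ii) --- the heart of the argument --- I would pin down the class $E^{(i)}|_{E^{(i)}}$ on the surface $E^{(i)}\cong\mathds{F}_2$. Writing it as $a\,s_i+b\,f_i$ with $s_i^2=-2$, $s_i\cdot f_i=1$, $f_i^2=0$, the relation $E^{(i)}\cdot f_i=-1$ (Lemma 2.8(iii)) gives $a=-1$, while $(E^{(i)})^3=-\deg N_{L_{i-1}/X^{(i-1)}}=2$ (Lemma 2.8(ii)) gives $(E^{(i)}|_{E^{(i)}})^2=2$, whence $b=-2$. Pushing forward to $X^{(i)}$, this reads $(E^{(i)})^2\equiv -L_i-2f_i$. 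Comparing with Lemma 2.8(i), $(E^{(i)})^2\equiv -\sigma_i^*(L_{i-1})-2f_i$, yields $\sigma_i^*(L_{i-1})\equiv L_i$, which is (ii). A byproduct I would flag for later use is $E^{(i)}\cdot L_i=(E^{(i)}|_{E^{(i)}})\cdot s_i=(-s_i-2f_i)\cdot s_i=0$.

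Parts (iii) and (iv) then fall out by the projection formula using $L_i\equiv\sigma_i^*(L_{i-1})$ and $E^{(i)}\cdot L_i=0$. Since $K_{X^{(i)}}=\sigma_i^*(K_{X^{(i-1)}})+E^{(i)}$, I get $K_{X^{(i)}}\cdot L_i=K_{X^{(i-1)}}\cdot L_{i-1}+E^{(i)}\cdot L_i=0$; and from $F^{(i)}=\sigma_i^*(F^{(i-1)})-E^{(i)}$, $F^{(i)}\cdot L_i=F^{(i-1)}\cdot L_{i-1}-E^{(i)}\cdot L_i=-2$. Finally, for (i) I would invoke Lemma 5.2 with $X^{(i)}$, $L_i$, and $F^{(i)}$: $X^{(i)}$ is smooth near $L_i$ (being the blow-up of the smooth $X^{(i-1)}$ along the smooth $L_{i-1}$), $L_i\cong L_{i-1}$ is smooth rational, $K_{X^{(i)}}\cdot L_i=0$ by (iii), and $F^{(i)}$ is a smooth surface with $F^{(i)}\cdot L_i=-2$ by (iv); Lemma 5.2 then gives $E^{(i+1)}\cong\mathds{F}_2$.

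The main obstacle I anticipate is (ii): correctly identifying $E^{(i)}|_{E^{(i)}}$ and keeping the sign/section conventions straight (that $L_i=s_i$ is the negative section with $s_i^2=-2$), since everything downstream --- including the verification of the hypotheses of Lemma 5.2 --- rests on $\sigma_i^*(L_{i-1})\equiv L_i$ and on the fact that $F^{(i)}$ remains smooth.
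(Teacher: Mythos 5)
Your proof is correct and follows essentially the same route as the paper: both rest on the intersection formulas of Lemma 2.8 for the blow-up of a curve, deduce $\sigma_i^*(L_{i-1})\equiv L_i$ from the classes $E^{(i)}\big\vert_{E^{(i)}}$, $\big(E^{(i)}\big)^3$ and $E^{(i)}\cdot f_i$, and then get (iii) and (iv) by the projection formula before feeding Lemma 5.2 to obtain (i). Your reordering (proving (iii)--(iv) before invoking Lemma 5.2 for (i)) is a minor logical tidying of the paper's presentation, and your identification of $E^{(i)}\vert_{E^{(i)}}=-s_i-2f_i$ in $\operatorname{Pic}(\mathds{F}_2)$ is an equivalent repackaging of the paper's direct computation of $\big(\sigma_i^*(L_{i-1})\big)^2$ and $f_i\cdot\sigma_i^*(L_{i-1})$.
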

\begin{proof}
Lemma 5.2 implies $(i)$.

By Lemma 2.8
\begin{align*}
0=E^{(i)}\cdot\sigma_i^*(L_{i-1})=E^{(i)}\vert_{E^{(i)}}\cdot\sigma_i^*(L_{i-1})=\big( \sigma_i^*(L_{i-1})-2f_i \big)\cdot\sigma_i^*(L_{i-1}).
\end{align*}
Thus $\sigma_i^*(L_{i-1})^2=-2f\cdot\sigma_i^*(L_{i-1})$.
Clearly $\sigma_i^*(L_{i-1})$ must be a section. Indeed, by Lemma 2.8
\begin{align*}
2=\big(E^{(i)}\big)^3=\Big( (\sigma_i^*(L_{i-1})-2f_i \Big)^2=2f_i\cdot\sigma_i^*(L_{i-1}),
\end{align*}
therefore $f_i\cdot\sigma_i^*(L_{i-1})=1$, that is $\sigma_i^*(L_{i-1})$ is a section. Since $\sigma_i^*(L_{i-1})^2=-2$, as computed above, it is the exceptional section $L_i$.

It follows from $(ii)$, that $L_i\cdot E^{(i)}=0$. Thus 
\begin{align*}
K_{X^{(i)}}\cdot L_{i}=K_{X^{(i-1)}}\cdot L_{i-1}+E^{(i)}\cdot L_i=0.
\end{align*}
Similarly
\begin{align*}
F^{(i)}\cdot L_i=F^{(i-1)}\cdot L_{i-1}+ E^{(i)}\cdot L_i=F^{(i-1)}\cdot L_{i-1}=-2.
\end{align*}
\end{proof}


\begin{proof}[Proof~of~Theorem~5.1]
Lemma 5.2 and Lemma 5.3 imply $(i)$ and $(ii)$.

Clearly $\nu_{E^{(1)}}(F^{(0)})=1$. On the other hand $L_{M-1}\subset F^{(M-1)}$ since $L_{M-1}\cdot F^{(M-1)}<0$. Hence $\nu_{E^{(M)}}(F^{(0)})=\nu_{E^{(M-1)}}(F^{(0)})+1$ and $(iii)$ holds.

By Lemma 2.8 and $(i)$ 
\begin{align*}
E^{(i-1,i)}\vert_{E^{(i)}}=\big(L_{i-1}\cdot E^{(i-1)}\big)f-E^{(i)}\vert_{E^{(i)}}=s_i+2f_i.
\end{align*}
Therefore $E^{(i-1,i)}\vert_{E^{(i)}}\cdot L_i=0$ and $(iv)$ holds.
\end{proof}

Suppose $\pi:X\to \MP^1$ is a del Pezzo fibration of degree $2$. Suppose $Q\in X$ is a $\frac{1}{2}(1,1,1)$ point and $F$ is a fiber containing $Q$. Suppose that the fiber $F$ can be embedded into $\MP(1_x,1_y,1_z,2_w)$ as a cone $q_4(x,y,z)=0$. Let $\sigma_Q:X^{(0)}\to X$ be the blow up of $X$ at $Q$ and let $E_Q$ be the exceptional divisor of $\sigma_Q$. 

Let $L\subset F$ be a ``half-line'', that is a curve $L$ such that $L\cdot(-K_F)=\frac{1}{2}$. Note that every curve in a ruling of $F$ is a half-line and that every half-line is such. Denote the proper transforms of $L$ and $F$ on $X^{(0)}$ as $L_0$ and $F^{(0)}$ respectively . Since $F$ is a cone, its blow up $F^{(0)}$ is a ruled surface over a curve of genus $3$. Cleary the curve is a plane quartic $q_4(x,y,z)=0$. The curve is smooth, since $X$ has only $\frac{1}{2}(1,1,1)$-singularities. We can construct the ladder associated to $L_0$. We also say that the ladder is associated to the half-line $L$. Now we show that $X^{(0)}$, $F^{(0)}$, and $L_0$ satisfy the assumptions of the Theorem 5.1.

\begin{Lemma}
The following equalities hold
\begin{enumerate}[(i)]
	\item $L_0\cdot E_Q=1$,
	\item $L_0\cdot F^{(0)}=-2$,
	\item $K_{X^{(0)}}\cdot L_0=0$.
\end{enumerate}
\end{Lemma}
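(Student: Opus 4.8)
The plan is to prove $(i)$ first, as it is the only genuine local computation at the quotient singularity; then $(ii)$ and $(iii)$ follow formally from it via the projection formula, Lemma 2.9, and adjunction along the fiber.

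For $(i)$ I would work in the local analytic model $\MC^3/\langle -I_3\rangle$ of $X$ near $Q$, in which $F$ is the affine cone $\{q_4(x,y,z)=0\}$ and $L$ is the image of a line $\ell$ through the origin in a direction on the quartic. Let $p:\MC^3\to\MC^3/\langle -I_3\rangle$ be the quotient. The Kawamata blow-up $\sigma_Q$ is dominated by the ordinary blow-up $\rho:W\to\MC^3$ of the origin, with exceptional divisor $\widetilde{E}\cong\MP^2$; the lift of $-I_3$ to $W$ fixes $\widetilde{E}$ pointwise, so the induced map $\phi:W\to X^{(0)}$ is a double cover ramified along $\widetilde{E}$ with $\phi^*E_Q=2\widetilde{E}$. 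Since $\ell$ is $(-I_3)$-invariant, $p^{-1}(L)=\ell$ is a single line whose strict transform $\widetilde{\ell}_0$ meets $\widetilde{E}$ transversally, so $\widetilde{\ell}_0\cdot\widetilde{E}=1$, while $\phi|_{\widetilde{\ell}_0}$ is $2$-to-$1$ onto $L_0$. The projection formula then gives
\[
2\,(L_0\cdot E_Q)=(\phi_*\widetilde{\ell}_0)\cdot E_Q=\widetilde{\ell}_0\cdot\phi^*E_Q=2\,(\widetilde{\ell}_0\cdot\widetilde{E})=2,
\]
hence $L_0\cdot E_Q=1$. One could instead present $L$ as the intersection of two coordinate surfaces through $Q$ and use Lemma 2.9$(ii)$, but the covering argument is cleaner.

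For $(ii)$, note first that $L\cdot F=0$ because $L$ lies in a fiber. Since $F$ is locally the cone $\{q_4=0\}$ and $\nu_{E_Q}$ assigns weight $\tfrac12$ to each weight-one coordinate by Lemma 2.9$(ii)$, the homogeneous quartic has $\nu_{E_Q}(F)=4\cdot\tfrac12=2$, so $\sigma_Q^*F=F^{(0)}+2E_Q$. Combining this with $(i)$ and the projection formula,
\[
L_0\cdot F^{(0)}=L_0\cdot\sigma_Q^*F-2\,(L_0\cdot E_Q)=L\cdot F-2=-2.
\]
For $(iii)$, Lemma 2.9$(i)$ gives $K_{X^{(0)}}=\sigma_Q^*K_X+\tfrac12 E_Q$, so with $(i)$ and the projection formula $K_{X^{(0)}}\cdot L_0=K_X\cdot L+\tfrac12$. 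Adjunction along the fiber, using $F|_F\equiv 0$, yields
\[
K_X\cdot L=K_F\cdot L=-(-K_F)\cdot L=-\tfrac12
\]
by the defining property of a half-line, whence $K_{X^{(0)}}\cdot L_0=0$.

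The main obstacle is $(i)$: once the local intersection number $L_0\cdot E_Q=1$ and the multiplicity $\nu_{E_Q}(F)=2$ are established, the rest is bookkeeping. The subtlety is that $Q$ is a quotient singularity, so the naive smooth-point count does not apply directly; the cleanest fix is to pass to the branched double cover $\phi$ and exploit that $\phi^*E_Q=2\widetilde{E}$ while $L$ has a single transverse lift.
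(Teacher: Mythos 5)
Your argument is correct, and parts $(ii)$ and $(iii)$ coincide with the paper's proof: the paper likewise gets $\nu_{E_Q}(F)=2$ from the local equation $q_4(x,y,z)=0$ on the chart $\MC^3/\langle -I_3\rangle$ via Lemma 2.9, and then concludes $L_0\cdot F^{(0)}=-2L_0\cdot E_Q$ and $K_{X^{(0)}}\cdot L_0=K_X\cdot L+\tfrac12 E_Q\cdot L_0$ exactly as you do. The genuine difference is in $(i)$. The paper stays downstairs: it writes $L=H_1\cap H_2$ for two weight-one hyperplane sections of $\MP(1_x,1_y,1_z,2_w)$, blows up the singular point of $\MP(1,1,1,2)$, identifies $\widetilde{H}_2$ with $\mathds{F}_2$ (the resolution of $\MP(1,1,2)$), and reads off $L_0\cdot E_Q=1$ as the intersection of a fiber with the exceptional section of $\mathds{F}_2$. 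You instead pass to the smooth double cover $\MC^3\to\MC^3/\langle -I_3\rangle$, use that the Kawamata blow-up is the quotient of the ordinary blow-up $W$, that $\phi^*E_Q=2\widetilde{E}$, and that $\widetilde{\ell}_0\to L_0$ has degree $2$ while $\widetilde{\ell}_0\cdot\widetilde{E}=1$; the projection formula then gives $L_0\cdot E_Q=1$. Both are standard local computations at the half-point and both are complete; your covering argument is more self-contained (it needs neither the global embedding of $F$ into $\MP(1,1,1,2)$ nor the $\mathds{F}_2$ identification) and applies verbatim to any curve through a $\tfrac12(1,\dots,1)$-point whose preimage upstairs is smooth and transverse to the exceptional divisor, whereas the paper's presentation keeps everything in the ambient weighted projective space where the later parts of the ladder construction also live.
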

\begin{proof}
Since $L_0\subset F^{(0)}$ we have $L_0\cdot E_Q=L_0\cdot E_Q\vert_{F^{(0)}}$. Let $\MP=\MP(1,1,1,2)$, and consider the embedding of $F$ into $\MP$. We can describe $L\subset F$ in $\MP$ as an intersection $H_1\cdot H_2$, for some $H_i\in\big| \mathcal{O}_\MP(1) \big|$. Let $\sigma_\MP:\widetilde{\MP}\to\MP$ be the blow up of the point $Q$ and let $E_{\MP}$ be its exceptional divisor. Clearly $\sigma_\MP:\sigma_\MP^{-1}(F)\to F$ is the blow up of $Q$ thus without any confusion we may identify $\sigma_\MP^{-1}(F)$ with $F^{(0)}$ and $\sigma_\MP^{-1}(L)$ with $L_0$. Let $\widetilde{H}_i$ be the proper transform of $H_i$ on $\widetilde{\MP}$, then $L_0=\widetilde{H}_1\cdot \widetilde{H}_2$.
Denote the exceptional divisor of $\sigma_\MP$ as $E_\MP$, then
\begin{align*}
L_0\cdot E_Q\vert_{F^{(0)}}=L_0\cdot E_\MP\vert_{F^{(0)}}=L_0\cdot E_\MP=\widetilde{H}_1\cdot \widetilde{H}_2 \cdot E_\MP=\widetilde{H}_1\vert_{\widetilde{H}_2}\cdot E_\MP\vert_{\widetilde{H}_2}.
\end{align*}
Clearly $H_2$ is isomorphic to $\MP(1,1,2)$ and $\sigma_\MP\vert_{\widetilde{H}_2}$ is the blow up of a singular point. Thus $\widetilde{H}_2\cong \mathds{F}_2$, $E_\MP\vert_{\widetilde{H}_2}$ is the exceptional section, and $\widetilde{H}_1\vert_{\widetilde{H}_2}$ is a fiber of $\widetilde{H}_2$. Hence
\begin{align*}
1=\widetilde{H}_1\vert_{\widetilde{H}_2}\cdot E_\MP\vert_{\widetilde{H}_2}=L_0\cdot E_Q.
\end{align*}

Consider the affine open subset $U\in\MP$ given by the $w\neq0$. Clearly $U=\MC^3/\langle -I_3 \rangle$ and the local equation of $F$ at $Q$ on $U$ is $q_4(x,y,z)=0$. Thus Lemma 2.9 implies $F^{(0)}=\sigma^*_\MP(F)-2E_Q$ and we find the intersection
\begin{align*}
L_0\cdot F^{(0)}=-2L_0\cdot E_Q=-2.
\end{align*}
The equality $(iii)$ follows from $(i)$ and Lemma 2.9
\begin{align*}
K_{X^{(0)}}\cdot L_0=(\sigma_Q^*{K_X}+\frac{1}{2}E_Q)\cdot L_0=K_X\cdot L+\frac{1}{2}E_Q\cdot L_0=0.
\end{align*}
\end{proof}

%
%

\section{Multiplicities on the ladder}
The plan is to associate a ladder to a half-line, to apply Corti inequality upstairs, and to derive a contradiction. Thus we need to find bounds on multiplicities of the cycles upstairs.

Let $A$ be a cycle, a divisor or a linear system on $X$. We denote its proper transform on $X^{(i)}$ as $A^{(i)}$. For divisors and cycles on $X^{(j)}$ we add upper index. For example, $E^{(1,3)}$ is the proper transform of $E^{(1)}$ on $X^{(3)}$. By $\sigma^*$ we mean the appropriate composition of $\sigma^*_i$. For example, $E^{(1,3)}=\sigma^*(E^{(1)})-E^{(2,3)}-E^{(3)}$, here $\sigma^*=\sigma^*_2\circ\sigma_1^*$. 

\begin{Prop}[\cite{Pukh123}]
Let $X^{(0)}$ be a threefold and let $F^{(0)}$ be a surface in it. Suppose $L_0$ is a smooth rational curve in $F^{(0)}$. Let $\sigma_i:X^{(i)}\to X^{(i-1)}$ be the associated ladder. Let $\nu$ be a discrete valuation of $K(X^{(0)})$ and suppose that a center of $\nu$ on $X^{(0)}$ is a point on $L_0$. Then there is a number $M$ such that for every $i<M$ the center of $\nu$ on $X^{(i)}$ is a point on the exceptional section $L_i$ and the center of $\nu$ on $X^{(M)}$ is
\begin{enumerate}[A)]
	\item a fiber of a ruled surface $E^{(M)}$,
	\item a point not on $L_M$ and not on $E^{(M-1,M)}$, or
	\item a point on $E^{(M)}\cap E^{(M-1,M)}$.
\end{enumerate}
\end{Prop}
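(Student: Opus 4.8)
The plan is to track the center of $\nu$ down the ladder inductively, using the key geometric facts established in Theorem 5.1, namely that each $E^{(i)}\cong\mathds{F}_2$, that the exceptional section $L_i$ satisfies $\sigma_i^*(L_{i-1})\equiv L_i$, and that the graph of the ladder is a simple chain (assertion (iv)). The inductive statement I would carry is: as long as the center of $\nu$ on $X^{(i)}$ is a point lying on $L_i$, the construction continues and I pass to $X^{(i+1)}$. Because $\sigma_{i+1}$ is the blow up of the smooth rational curve $L_i$, the center of $\nu$ on $X^{(i)}$ being a point $p_i\in L_i$ forces the center on $X^{(i+1)}$ to lie inside the exceptional divisor $E^{(i+1)}$, in the fiber of $E^{(i+1)}\to L_i$ over $p_i$. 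This fiber is a $\mathds{P}^1$; the center of $\nu$ on $X^{(i+1)}$ is therefore either this whole fiber (case A), or a single point of it.

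The heart of the argument is the trichotomy for a point in that fiber. The fiber of the ruled surface $E^{(i+1)}\cong\mathds{F}_2$ meets the exceptional section $L_{i+1}$ in exactly one point, and — by assertion (iv) of Theorem 5.1, the computation $E^{(i,i+1)}\vert_{E^{(i+1)}}=s_{i+1}+2f_{i+1}$ showing $E^{(i,i+1)}\vert_{E^{(i+1)}}\cdot L_{i+1}=0$ — the proper transform $E^{(i,i+1)}$ of the previous exceptional divisor meets $E^{(i+1)}$ along a curve disjoint from $L_{i+1}$, so it meets a general fiber in one further point. A point of the fiber is thus exactly one of: the point on $L_{i+1}$, the point on $E^{(i,i+1)}$, or a general point lying on neither. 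If the center is the point on $L_{i+1}$, I set $i+1$ as the new index and continue the induction. Otherwise the process stops at $M:=i+1$, and the center is either a point on $E^{(M-1,M)}$ not on $L_M$ (case C, when the two distinguished points coincide or the center is the intersection point $E^{(M)}\cap E^{(M-1,M)}$) or a point on neither distinguished curve (case B). The remaining possibility, that the center is the whole fiber, is case A.

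To guarantee that the induction terminates — i.e.\ that such a finite $M$ exists — I would invoke the standard fact that a divisorial valuation $\nu$ is realized on some model after finitely many blow ups, so the center cannot remain a point strictly above $L_i$ for all $i$; equivalently, $\nu(E^{(i)})$ cannot stay zero indefinitely while $\nu$ has positive value somewhere, and the discrepancies $a(E^{(i)},\dots)$ increase along the chain so the center must eventually drop off $L_i$ or become divisorial. I expect the main obstacle to be precisely this termination argument: one must argue cleanly that the center cannot sit on $L_i$ for every $i$, using that $\nu$ is a fixed discrete valuation whose realization requires only finitely many of these blow ups. Once termination is in hand, the case analysis at step $M$ is a direct reading of the intersection theory of $\mathds{F}_2$ already recorded in Theorem 5.1, so the bulk of the work is bookkeeping rather than new estimates.
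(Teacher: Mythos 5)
The paper itself gives no proof of this proposition --- it is quoted from \cite{Pukh123} --- so there is nothing internal to compare against; judged on its own terms, your skeleton is the right (and essentially forced) one: induct on $i$, note that a point center $p_i\in L_i$ pushes the center on $X^{(i+1)}$ into the fiber of $E^{(i+1)}\to L_i$ over $p_i$, and read the trichotomy A)/B)/C) off the facts that this fiber meets $L_{i+1}$ in one point and meets $E^{(i,i+1)}\vert_{E^{(i+1)}}$ in one point lying off $L_{i+1}$ (Theorem 5.1(iv)). One quibble: the two distinguished points of the fiber can never coincide, precisely because $E^{(i,i+1)}\vert_{E^{(i+1)}}$ is disjoint from $L_{i+1}$, so the parenthetical in your description of case C is vacuous.

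The genuine gap is the termination step, which you rightly flag as the crux but do not close, and the two heuristics you offer do not work. First, ``$\nu(E^{(i)})$ cannot stay zero indefinitely'' is backwards: as long as the center on $X^{(i)}$ is a point of $L_i\subset E^{(i)}$ one has $\nu(E^{(i)})>0$ automatically, so the positivity of these values is not what stops the process. Second, the standard fact that a divisorial valuation is extracted by finitely many blow ups does not apply as stated, because the ladder blows up the curve $L_i$, not the center $p_i$ of $\nu$; you are not running the canonical ``blow up the center'' sequence, so that theorem gives you nothing directly. The argument that does work uses Theorem 5.1(iii): on $X^{(i)}$ one has $\sigma^*F^{(0)}=F^{(i)}+\sum_{j\leqslant i}jE^{(j,i)}$, all coefficients nonnegative, so if the center of $\nu$ on $X^{(i)}$ still lies on $L_i\subset E^{(i)}$ then
\begin{align*}
\nu(F^{(0)})\;\geqslant\; i\,\nu(E^{(i)})\;\geqslant\; i\,c,
\end{align*}
where $c>0$ is the minimal positive value of the discrete valuation $\nu$. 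Since $\nu(F^{(0)})$ is a fixed finite number, this bounds $i$, so the center must leave $L_i$ after finitely many steps; equivalently, for divisorial $\nu$ the finite discrepancy $a(\nu,X^{(0)},0)$ would have to dominate $i\,\nu(E^{(i)})$ for every $i$. With termination supplied this way, the rest of your case analysis goes through.
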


Suppose the linear system $\mathcal{M}\subset\big|-nK_X+lF\big|$ has a supermaximal singularity $\nu$ at a nonsingular point $P\in X$. Let $F$ be a fiber containing $P$ and suppose $F$ contains half-point. Recall the notations of Subsection 4.1. Let $Z=D_1\cdot D_2$ for generic divisors $D_1,D_2\in\mathcal{M}$. Let $Z_h$ be the horizontal part of $Z$ and let $Z_v$ be the part of $Z$ which lies in $F$.
Let $\gamma$ be the number such that the pair 
$(X,\frac{1}{n}\mathcal{M}-\gamma F)$
is strictly canonical at $\nu$.

Let $L$ be a unique curve from the ruling of $F$ passing through $P$. The cycle $Z_v$ can be decomposed as $Z_v=kL+C$, where $k\geqslant 0$ and $C$ does not contain $L$.

\begin{Lemma}
The inequality $C\cdot L\leqslant \gamma n^2$ holds.
\end{Lemma}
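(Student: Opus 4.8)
The plan is to turn the statement into a short intersection-theoretic computation on the singular fibre $F$ and then feed in the one piece of genuinely global information we have, namely that $\nu$ is a \emph{supermaximal} singularity. The key structural fact I would exploit is that $F$ is a quartic cone, so its space of numerical classes of $1$-cycles with $\MQ$-coefficients is one-dimensional and generated by the class of a ruling line $L$. In particular $-K_F\equiv 4L$, which is forced by the numerics $L\cdot(-K_F)=\tfrac12$ and $L^2=\tfrac18$ already used in the proof of Proposition 4.4.

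First I would record the adjunction identity on $F$. Since $F$ is a fibre of $\pi$ it is a Cartier divisor, and for any $1$-cycle supported on $F$ the restriction $F\vert_F$ pairs to zero (two distinct fibres are numerically equivalent and disjoint), so $-K_X\vert_F\equiv -K_F$. As $Z_v$ is supported on $F$, and as all intersection numbers below are computed on the normal surface $F$ with its quotient singularity at $Q$ and are well defined with $\MQ$-coefficients, this gives
\[
\deg Z_v \;=\; Z_v\cdot(-K_X)\;=\;Z_v\cdot(-K_F)\;=\;4\,\big(Z_v\cdot L\big),
\]
using $-K_F\equiv 4L$ in the last step. Hence $Z_v\cdot L=\tfrac14\deg Z_v$.

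Next I would invoke supermaximality. By the definition introduced right after Corollary 4.10, and with $\gamma$ being precisely the constant of Lemma 4.9 that makes $\big(X,\tfrac1n\mathcal{M}-\gamma F\big)$ strictly canonical at $\nu$, the hypothesis that $\nu$ is supermaximal reads $\deg Z_v<4n^2\gamma$. Combining with the previous step, $Z_v\cdot L<n^2\gamma$. Finally I decompose $Z_v=kL+C$ with $k\geqslant 0$ and $L\not\subset\ON{Supp}C$, so that
\[
Z_v\cdot L \;=\; k\,L^2 + C\cdot L \;=\; \tfrac{k}{8}+C\cdot L.
\]
Since $L$ and $C$ are effective with no common component we have $C\cdot L\geqslant 0$, and since $k\geqslant 0$ we conclude $C\cdot L\leqslant Z_v\cdot L<n^2\gamma$, which is the desired bound (in fact strictly).

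I expect the only delicate point to be the bookkeeping on the singular surface $F$: justifying $-K_F\equiv 4L$ from the one-dimensionality of $\mathrm{Num}(F)_\MQ$ for the cone (via its resolution, a ruled surface over the smooth plane quartic whose negative section is contracted to the vertex), and checking that the $\MQ$-valued numbers $L^2=\tfrac18$ and $C\cdot L\geqslant 0$ remain valid even though $L$, and possibly $C$, pass through the vertex $Q$. These are routine given the explicit cone structure of $F$, but they are exactly where the argument uses the hypothesis that the bad fibres are quartic cones.
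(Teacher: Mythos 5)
Your argument is correct and is essentially the paper's own proof: both rest on the one-dimensionality of $\operatorname{Num}(F)$ for the quartic cone (giving $-K_F\equiv 4L$ and $L^2=\tfrac18$), the supermaximality bound $\deg Z_v\leqslant 4\gamma n^2$ from Corollary 4.9, and the decomposition $Z_v=kL+C$; the paper just writes $C\equiv rL$ and computes $C\cdot L=r/8\leqslant \gamma n^2-k/8$ rather than pairing $Z_v$ with $L$ first. The only discrepancies are trivial reference slips (the supermaximal definition follows Corollary 4.9, and $\gamma$ comes from Lemma 4.8).
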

\begin{proof}
Note that $\Big( \ON{Div}F/\equiv \Big)=\MZ$. Thus it is easy to see that $-K_F\equiv 4L$ and that $C\equiv rL$ for some $r$. 
By Corollary 4.9 there is a bound $\deg (kL+C)\leqslant 4\gamma n^2$,
therefore $r\leqslant 8\gamma n^2-k$. Hence
\begin{align*}
C\cdot L=rL^2=\frac{r}{8}\leqslant \gamma n^2-\frac{k}{8}\leqslant \gamma n^2.
\end{align*}
\end{proof}

\begin{Lemma}
Let $\nu_Q$ be the valuation corresponding to the exceptional divisor $E_Q$ of a blow up of a half-point $Q$. Then for a generic $D\in\mathcal{M}$
\begin{align*}
D^{(i)}\cdot L_{i}=\frac{n}{2}-\nu_Q(D).
\end{align*}
\end{Lemma}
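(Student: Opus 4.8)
The plan is to prove the stated identity by induction on $i$, the inductive heart being that the number $D^{(i)}\cdot L_i$ is preserved as we pass from one rung of the ladder to the next. Thus it suffices to compute the value $D^{(0)}\cdot L_0$ at the bottom of the ladder and to check invariance under each blow up $\sigma_i$.

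For the base case I would use the pullback formula for the blow up of the half-point: by the definition of $\nu_Q=\nu_{E_Q}$ we have $\sigma_Q^*(D)=D^{(0)}+\nu_Q(D)E_Q$, so $D^{(0)}\cdot L_0=\sigma_Q^*(D)\cdot L_0-\nu_Q(D)\,E_Q\cdot L_0$. The projection formula gives $\sigma_Q^*(D)\cdot L_0=D\cdot L$, where $L$ is the half-line of which $L_0$ is the proper transform. Now $D\sim -nK_X+lF$, and since $L$ lies in the fiber $F$ we have $F\cdot L=0$, while $-K_X\cdot L=-K_F\cdot L=\tfrac12$ because $L$ is a half-line; hence $D\cdot L=\tfrac n2$. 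Combined with $E_Q\cdot L_0=1$ from Lemma 5.4 this yields $D^{(0)}\cdot L_0=\tfrac n2-\nu_Q(D)$, which is the claim for $i=0$.

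For the inductive step, write $\sigma_i^*(D^{(i-1)})=D^{(i)}+m_iE^{(i)}$ with $m_i=\mult_{L_{i-1}}D^{(i-1)}$. The decisive structural fact is the ladder relation $L_i\equiv\sigma_i^*(L_{i-1})$ of Theorem 5.1(ii). Applying the projection formula to $\sigma_i$ gives $\sigma_i^*(D^{(i-1)})\cdot L_i=\sigma_i^*(D^{(i-1)})\cdot\sigma_i^*(L_{i-1})=D^{(i-1)}\cdot L_{i-1}$, while $E^{(i)}\cdot L_i=E^{(i)}\cdot\sigma_i^*(L_{i-1})=0$ by Lemma 2.8(vi). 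Consequently $D^{(i)}\cdot L_i=D^{(i-1)}\cdot L_{i-1}$, so the intersection number does not change along the ladder, and induction from the base case gives $D^{(i)}\cdot L_i=\tfrac n2-\nu_Q(D)$ for every $i$.

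Once the two geometric inputs $L_i\equiv\sigma_i^*(L_{i-1})$ and $E^{(i)}\cdot L_i=0$ are in hand — both consequences of the ladder structure established in Section 5 — the argument is essentially formal, so I do not anticipate a genuine obstacle. The one point demanding care is the base case: one must check that the multiplicity $\nu_Q(D)$ enters solely through the coefficient $E_Q\cdot L_0=1$, and that the fiber term $lF$ contributes nothing because $F\cdot L=0$. It is worth noting that the multiplicities $m_i$ of $D^{(i-1)}$ along $L_{i-1}$ play no role whatsoever, precisely because $E^{(i)}\cdot L_i=0$ annihilates their contribution; this is exactly what makes the number constant along the ladder.
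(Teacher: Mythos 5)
Your argument is correct and follows the same route as the paper: compute $D^{(0)}\cdot L_0$ via $\sigma_Q^*(D)\cdot L_0-\nu_Q(D)E_Q\cdot L_0$ with $E_Q\cdot L_0=1$ and $D\cdot L=-nK_X\cdot L=\tfrac n2$, then propagate up the ladder using $L_i\equiv\sigma_i^*(L_{i-1})$. The paper merely asserts the invariance $D^{(0)}\cdot L_0=D^{(i)}\cdot L_i$ where you spell out the projection-formula computation and the vanishing $E^{(i)}\cdot L_i=0$; that is a harmless (indeed helpful) elaboration, not a different method.
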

\begin{proof}
Since $\sigma_i^*L_{i-1}=L_i$ the equality $D^{(0)}\cdot L_{0}=D^{(i)}\cdot L_{i}$ holds for all $i$. Theorem 6.1 implies
\begin{align*}
D^{(0)}\cdot L_0=\sigma_Q^*(D)\cdot L_0 - \nu_Q(D)E_Q\cdot L_0=\sigma_Q^*(D)\cdot L_0-\nu_Q(D).
\end{align*}
On the other hand, by Lemma 2.8
\begin{align*}
\sigma_Q^*(D)\cdot L_0=D\cdot L=-nK_X\cdot L=\frac{n}{2}.
\end{align*}
Combining the equalities we get the statement of the lemma.
\end{proof}

Denote $Z_i=D^{(i)}_1\cdot D^{(i)}_2$, then by Lemma 2.13 there is the decomposition 
\begin{align*}
Z_0=Z_v^{(0)}+Z_h^{(0)}+Z_Q,
\end{align*}
where $Z_Q$ is the part of the cycle which lives on the exceptional divisor. We disregard the part $Z_Q^{(i)}$ in further computations since it is away from from the center of $\nu$.

For every $i>0$ there is a part $C_i$ of the cycle $Z^{(i)}$ which lives on $E^{(i)}$. Recall that $E^{(i)}$ is a ruled surface $\mathds{F}_2$ and $\sigma_i\vert_{E^{(i)}}$ is the $\MP^1$-fibration. We say that a curve $B$ on $E^{(i)}$ is vertical if $\sigma_i(B)$ is a point and horizontal otherwise. The cycle $C_i$ can be decomposed into the sum of the exceptional section with multiplicity, the rest of the horizontal part, and the vertical part: 
\begin{align*}
C_i=k_iL_i+C^{(i)}_h+C^{(i)}_v.
\end{align*}
Note that $\sigma^* C^{(i-1,i)}=C^{(i-1,i+k)}$ for any $k>0$, $i>1$ since $E^{(i-1,i)}$ is disjoint from $L_i$. Thus there are the decompositions
\begin{align*}
&Z_0=Z_h^{(0)}+Z_v^{(0)}=Z_h^{(0)}+C^{(0)}+k_0L_0,\\
&Z_1=Z_h^{(1)}+C^{(1)}+C_h^{(1)}+C_v^{(1)}+k_1L_1,\\
&Z_2=Z_h^{(2)}+C^{(2)}+C_h^{(1,2)}+C_v^{(1,2)}+C_h^{(2)}+C_v^{(2)}+k_2L_2,\\
&Z_i=Z_h^{(i)}+C^{(i)}+\sigma^*C_h^{(1,2)}+\sigma^*C_v^{(1,2)}+\dots+C_h^{(i-1,i)}+C_v^{(i-1,i)}+C_h^{(i)}+C_v^{(i)}+k_iL_i.
\end{align*}

Let $\lambda_i=\mult_{L_{i-1}}\mathcal{M}^{({i-1})}$ and recall that $f_i, s_i\in A^2(X^{(i)})$ are the classes of a fiber and of the exceptional section of $E^{(i)}$ respectively.
Thus $C_v^{(i)} \equiv d_v^{(i)}f_i$ and $C_h^{(i)}\equiv d_h^{(i)}s_i+\beta_if_i$ for some $d_v^{(i)}$, $d_h^{(i)}$, and $\beta_i$. Also $2d_h^{(i)}\leqslant \beta_i$ because $C_h^{(i)}$ does not contain the exceptional section.

\begin{Lemma}
The following relations for the proper transforms and the pullbacks of the cycles hold
\begin{align*}
&C_h^{(i,i+1)} \equiv d^{(i)}_h s^i + \beta_i f_i - (\beta_i - 2d^{(i)}_h)f_{i+1},\\
&C_v^{(i,i+1)} \equiv d^{(i)}_v (f_i-f_{i+1}),\\
&Z_h^{(i+1)} \equiv Z_h^{(i)}-\alpha_{i+1} f_{i+1},\\
&C^{(i+1)} \equiv \sigma^* C^{(i)}-(C^{(0)}\cdot L_0)_{F^{(0)}}f_{i+1},
\end{align*}
where $\alpha_i\leqslant 2n^2$.
\end{Lemma}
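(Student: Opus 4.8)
The four formulas split naturally into two easy intersection computations on the ruled surfaces $E^{(i)}$ and two transformation laws, of which the estimate for the horizontal part is the real content. Throughout I would use that $E^{(i)}\cong\mathds{F}_2$ by Theorem 5.1(i), so that on $E^{(i)}$ the classes satisfy $s_i^2=-2$, $f_i\cdot s_i=1$, $f_i^2=0$, and $L_i=s_i$, and that $\sigma_{i+1}$ blows up precisely $L_i$.

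For $C_h^{(i,i+1)}$ and $C_v^{(i,i+1)}$ I would apply Lemma 2.13 with the smooth surface $S=E^{(i)}$ containing both the curve and $L_i$: the proper transform of an effective curve $B\subset E^{(i)}$ not containing $L_i$ is $B^{(i,i+1)}\equiv\sigma_{i+1}^{*}B-(B\cdot L_i)_{E^{(i)}}f_{i+1}$. Substituting $C_h^{(i)}\equiv d_h^{(i)}s_i+\beta_i f_i$ gives $(C_h^{(i)}\cdot L_i)_{E^{(i)}}=\beta_i-2d_h^{(i)}$, and substituting $C_v^{(i)}\equiv d_v^{(i)}f_i$ gives $(C_v^{(i)}\cdot L_i)_{E^{(i)}}=d_v^{(i)}$; these are exactly the first two displayed relations.

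The formula for $C^{(i+1)}$ rests on an invariance property: each blow-up restricts to an isomorphism on the fiber. Indeed $L_i\subset F^{(i)}$ by Lemma 5.3(iv) (it has negative self-intersection there), and blowing up a threefold along a curve contained as a reduced Cartier divisor in a smooth surface induces an isomorphism $\sigma_{i+1}|_{F^{(i+1)}}\colon F^{(i+1)}\to F^{(i)}$ carrying $L_{i+1}$ to $L_i$ and $C^{(i+1)}$ to $C^{(i)}$. Hence the number $(C^{(i)}\cdot L_i)_{F^{(i)}}$ does not depend on $i$ and equals $(C^{(0)}\cdot L_0)_{F^{(0)}}$. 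Applying Lemma 2.13 once more, now with $S=F^{(i)}$, yields the stated relation for $C^{(i+1)}$.

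The main obstacle is the bound $\alpha_{i+1}\leqslant 2n^2$ for the horizontal part. Since $Z_h^{(i)}$ is horizontal it is not contained in $F^{(i)}$, so Lemma 2.14 applied (componentwise, locally) with the surface $F^{(i)}\ni L_i$ gives $Z_h^{(i+1)}\equiv\sigma_{i+1}^{*}Z_h^{(i)}-\alpha_{i+1}f_{i+1}$ with $0\leqslant\alpha_{i+1}\leqslant Z_h^{(i)}\cdot F^{(i)}$. It therefore suffices to prove $Z_h^{(i)}\cdot F^{(i)}\leqslant 2n^2$ for all $i$, which I would do by induction. For the base case, Lemma 4.7 gives $Z_h\cdot F=2n^2$, and since $F^{(0)}=\sigma_Q^{*}(F)-2E_Q$ by Lemma 2.9 while $Z_h^{(0)}\cdot E_Q\geqslant 0$ (no component of $Z_h^{(0)}$ lies in $E_Q$), the projection formula yields $Z_h^{(0)}\cdot F^{(0)}=2n^2-2(Z_h^{(0)}\cdot E_Q)\leqslant 2n^2$. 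For the inductive step I would use $F^{(i+1)}\equiv\sigma_{i+1}^{*}F^{(i)}-E^{(i+1)}$ (the multiplicity of the smooth surface $F^{(i)}$ along $L_i$ being $1$, consistent with $\nu_{E^{(i+1)}}(F^{(0)})=i+1$ of Theorem 5.1(iii)) together with Lemma 2.8, in particular $f_{i+1}\cdot E^{(i+1)}=-1$ and $E^{(i+1)}\cdot\sigma_{i+1}^{*}(\,\cdot\,)=0$, to compute $Z_h^{(i+1)}\cdot F^{(i+1)}=Z_h^{(i)}\cdot F^{(i)}-\alpha_{i+1}\leqslant Z_h^{(i)}\cdot F^{(i)}$, closing the induction and hence giving $\alpha_{i+1}\leqslant 2n^2$.
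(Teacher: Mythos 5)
Your proposal is correct and follows essentially the same route as the paper: Lemma 2.13 applied on the surfaces $E^{(i)}$ and $F^{(i)}$ for the first, second and fourth relations, and Lemma 2.14 together with $Z_h\cdot F=2n^2$ for the bound on $\alpha_{i+1}$. The only cosmetic difference is that you establish $Z_h^{(i)}\cdot F^{(i)}\leqslant 2n^2$ by induction, whereas the paper gets it in one step from $Z_h^{(i)}\cdot\sigma^*(F)=2n^2$ and the effectivity of $\sigma^*(F)-F^{(i)}$; the two computations are equivalent.
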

\begin{proof}
The equalities follow from Lemma 2.13 and computations of intersections
\begin{align*}
&(C_h^{(i)}\cdot L_i)_{E^{(i)}}=\beta_i - 2d^{(i)}_h,\\
&(C_v^{(i)}\cdot L_i)_{E^{(i)}}=d^{(i)}_v,\\
&(C^{(i)}\cdot L_i)_{F^{(i)}}=(C^{(0)}\cdot L_0)_{F^{(0)}}.
\end{align*}
The bound on $\alpha_i$ follows from Lemma 2.14 and the equality $Z_h^{(i)}\cdot \sigma^*(F)=2n^2$.
\end{proof}

\begin{Lemma}
Vertical degrees $\beta_i$ and $d_v^{(i)}$ satisfy the following relations. 
For $i=1$
\begin{align*}
\beta_1+d_v^{(1)}=\alpha_1+(C^{(0)}\cdot L_0)-\lambda_1 \big( n-2\nu_Q(D) \big)-2\lambda_1^2,
\end{align*}
and for $i\geqslant2$
\begin{align*}
\beta_i+d_v^{(i)}=\alpha_{i}+(C^{(0)}\cdot L_0)-\lambda_i \big( n-2\nu_Q(D) \big) -2\lambda_i^2+d_v^{(i-1)}+\big(\beta_{i-1}-2d_h^{(i-1)}\big).
\end{align*}
\end{Lemma}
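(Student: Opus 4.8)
The plan is to compute, at each rung of the ladder, how much of the fibre class $f_i$ the cycle $Z_i=D_1^{(i)}\cdot D_2^{(i)}$ acquires on the new exceptional surface $E^{(i)}$. By the decomposition of the previous paragraphs the part of $Z_i$ supported on $E^{(i)}$ is $C_i=k_iL_i+C_h^{(i)}+C_v^{(i)}\equiv(k_i+d_h^{(i)})s_i+(\beta_i+d_v^{(i)})f_i$, so $\beta_i+d_v^{(i)}$ is exactly the coefficient of $f_i$ in $C_i$, the section $k_iL_i=k_is_i$ contributing nothing to it. First I would apply Lemma 2.15 to the blow-up $\sigma_i\colon X^{(i)}\to X^{(i-1)}$ of $L_{i-1}$ with $m=\lambda_i$ and $B=L_{i-1}$, writing $Z_i\equiv\sigma_i^*(Z_{i-1})+Z_{E^{(i)}}$ where $Z_{E^{(i)}}\equiv\lambda_i^2(E^{(i)})^2-2\lambda_i(D_1^{(i-1)}\cdot L_{i-1})f_i$. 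The coefficient of $f_i$ in $C_i$ then splits into two contributions: the self-intersection term $Z_{E^{(i)}}$, and the exceptional parts picked up by the components of $Z_{i-1}$ that meet $L_{i-1}$.

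For the first contribution, Theorem 5.1 gives $E^{(i)}\cong\mathds{F}_2$ and $\sigma_i^*(L_{i-1})\equiv L_i=s_i$, while Lemma 2.8 together with $K_{X^{(i-1)}}\cdot L_{i-1}=0$ yields $\deg N_{L_{i-1}/X^{(i-1)}}=-2$; hence $(E^{(i)})^2=-s_i-2f_i$, and the $f_i$-coefficient of $\lambda_i^2(E^{(i)})^2$ is $-2\lambda_i^2$ (the $-\lambda_i^2 s_i$ part feeds only $k_i$). Lemma 6.3 gives $D_1^{(i-1)}\cdot L_{i-1}=\tfrac{n}{2}-\nu_Q(D)$, so the remaining term contributes $-2\lambda_i(\tfrac{n}{2}-\nu_Q(D))=-\lambda_i(n-2\nu_Q(D))$. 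For the second contribution I would read off the exceptional ($f_i$) parts of the relevant summands of $Z_{i-1}$ directly from the previous lemma on proper transforms: the horizontal cycle $Z_h^{(i-1)}$ contributes $\alpha_i$, the transform $C^{(i-1)}$ of the original vertical cycle contributes $(C^{(0)}\cdot L_0)$, and for $i\geqslant2$ the two summands living on $E^{(i-1)}$ contribute $(\beta_{i-1}-2d_h^{(i-1)})$ from $C_h^{(i-1)}$ and $d_v^{(i-1)}$ from $C_v^{(i-1)}$, whereas the centre $k_{i-1}L_{i-1}$ pulls back to $k_{i-1}s_i$ and so adds nothing to $f_i$.

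Summing these and equating with the $f_i$-coefficient of $C_i$ gives exactly the asserted recursion, the $i=1$ case being obtained by dropping the two $E^{(i-1)}$-terms, since there is no surface $E^{(0)}$ and $Z_0$ has only the summands $Z_h^{(0)}$, $C^{(0)}$ and $k_0L_0$. The main thing to be careful about is the bookkeeping of which components actually reach $E^{(i)}$: here the simple-chain structure of the ladder, Theorem 5.1(iv), is essential, for it guarantees that the older exceptional surfaces $E^{(j)}$ with $j<i-1$ are disjoint from $L_{i-1}$, so their transforms pull back with no new $f_i$-term and only $Z_h$, $C^{(i-1)}$, $C_h^{(i-1)}$, $C_v^{(i-1)}$ and the centre $L_{i-1}$ itself feed into $E^{(i)}$. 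The only other subtlety is to keep the section class $s_i$ cleanly separated from $f_i$ throughout, so that the $s_i$-terms are absorbed into $k_i$ and do not pollute the $f_i$-count.
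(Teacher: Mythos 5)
Your proposal is correct and follows the same route as the paper: both compute $Z_i$ in two ways — once via Lemma 2.15 applied to the blow-up of $L_{i-1}$ (using Lemma 6.3 for $D^{(i-1)}\cdot L_{i-1}$ and Lemma 2.8 for $(E^{(i)})^2$), and once via the explicit decomposition together with the $f_i$-corrections of Lemma 6.4 — and then equate the $f_i$-coefficients, which is exactly the paper's ``modulo pullbacks'' comparison since $s_i\equiv\sigma_i^*(L_{i-1})$. The bookkeeping of which components feed into $E^{(i)}$, including the role of Theorem 5.1(iv), matches the paper's argument.
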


\begin{proof}
By Lemma 2.15 and Lemma 6.3
\begin{align*}
&z_1\equiv \sigma_1^{*}(z_0)+\lambda_1^2 \big( E^{(1)} \big)^2 -2\lambda_1(D^{(0)}_1\cdot L_{0})f\equiv \sigma_1^{*}(z_0)-\lambda_1^2\sigma_1^{*}(L_0)-\Big(\lambda_1\big(n-2\nu_Q(D)\big)+2\lambda_1^2 \Big)f_1.
\end{align*}
On the other hand the decomposition of $Z_1$ and Lemma 6.4 imply
\begin{align*}
&Z_1=Z_h^{(1)}+C^{(1)}+C_h^{(1)}+C_v^{(1)}+k_1L_1 \equiv \sigma_1^* \big(Z_h^{(0)}+C^{(0)}+k_1L_0 \big) + C_h^{(1)}+C_v^{(1)} - (\alpha_1 + C^{(0)}\cdot L_0)f_1.
\end{align*}
Combining these equivalences we find that the following holds modulo pullback of a cycle
\begin{align*}
(\beta_1+d_v^{(1)})f_1 \equiv C_h^{(1)}+C_v^{(1)} \equiv -\Big( \lambda_1  \big(n-2\nu_Q(D) \big)+2\lambda_1^2\Big)f_1+\big(\alpha_1 + C^{(0)}\cdot L_0\big)f_1.
\end{align*}

Similarly by Lemma 2.15 and Lemma 6.3
\begin{align*}
&z_i\equiv \sigma^{*}_i \big(Z^{(i-1)}\big)+\lambda_i^2\big(E^{(i)}\big)^2-2\lambda_i\big(D^{(i-1)}_1\cdot L_{i-1}\big)f\equiv\\
&\equiv\sigma_i^{*}\big(Z^{(i-1)}\big)-\lambda_i^2\sigma^{*}(s_{i-1})-\Big(\lambda_i\big(n-2\nu_Q(D)\big)-2\lambda_i^2\Big)f_i.
\end{align*}
Once again from the decomposition of $Z^{(i)}$ and Lemma 6.4 we see that
\begin{align*}
Z^{(i)}&=Z_h^{(i)}+C^{(i)}+\sigma^*C_h^{(1,2)}+\sigma^*C_v^{(1,2)}+\dots+C_h^{(i-1,i)}+C_v^{(i-1,i)}+C_h^{(i)}+C_v^{(i)}+k_iL_i\equiv\\
&\equiv\sigma^* ( \dots )+C_h^{(i)}+C_v^{(i)}-\Big(\alpha_{i}+C^{(0)}\cdot L_0+\big(\beta_{i-1} - 2d^{(i-1)}_h\big)+d^{(i-1)}_v\Big)f_i.
\end{align*}
Combining these equivalences and considering them modulo pullbacks of the cycles we conclude that
\begin{align*}
d^{(i)}_v+\beta_i = \Big(\alpha_{i}+C^{(0)}\cdot L_0+\big(\beta_i - 2d^{(i)}_h\big)&+d^{(i-1)}_v \Big)-\Big(\lambda_i\big(n-2\nu_Q(D)\big)+2\lambda_i^2 \Big).
\end{align*}
\end{proof}

\begin{Cor}
The vertical degrees are bounded as follows
\begin{align*}
\beta_i+d_v^{(i)}< \sum_{j=1}^i \big( 2n^2-2\lambda_i^2+n^2\gamma \big)
\end{align*}
\end{Cor}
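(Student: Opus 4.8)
The plan is to run the recursion of Lemma 6.5 and telescope it, using only the elementary bounds already available for the quantities on the right-hand side. Write $S_i=\beta_i+d_v^{(i)}$, and abbreviate $A=(C^{(0)}\cdot L_0)$ and $B=n-2\nu_Q(D)$. For $i\geqslant 2$ the relation of Lemma 6.5 reads $S_i=\alpha_i+A-\lambda_i B-2\lambda_i^2+\big(d_v^{(i-1)}+\beta_{i-1}\big)-2d_h^{(i-1)}$, and since $d_v^{(i-1)}+\beta_{i-1}=S_{i-1}$ by definition, this is a genuine recursion $S_i=S_{i-1}+\alpha_i+A-\lambda_i B-2\lambda_i^2-2d_h^{(i-1)}$ whose base case $S_1=\alpha_1+A-\lambda_1 B-2\lambda_1^2$ is exactly the $i=1$ formula. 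Telescoping down to $S_1$ gives
\[
S_i=\sum_{j=1}^i\big(\alpha_j+A-\lambda_j B-2\lambda_j^2\big)-2\sum_{j=1}^{i-1}d_h^{(j)}.
\]

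Next I would bound each summand. The inequality $\alpha_j\leqslant 2n^2$ is precisely Lemma 6.4. The final sum is non-negative, since each $d_h^{(j)}$ is the degree of an effective horizontal cycle, so discarding it only weakens the estimate in the right direction. For the term $-\lambda_j B$ I would check $B\geqslant 0$: by Proposition 4.4 the pair $(X,\frac1n\mathcal M)$ is canonical at the half-point $Q$, hence by Proposition 2.10 it is canonical at $E_Q$, that is $a(E_Q,X,\frac1n\mathcal M)=\frac12-\frac1n\nu_Q(\mathcal M)\geqslant 0$, where $a(E_Q,X,0)=\frac12$ comes from Lemma 2.9. Thus $\nu_Q(D)=\nu_Q(\mathcal M)\leqslant n/2$ and $B=n-2\nu_Q(D)\geqslant 0$. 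As $\lambda_j\geqslant 0$, every term $-\lambda_j B\leqslant 0$ and may be dropped, leaving $S_i\leqslant\sum_{j=1}^i\big(2n^2-2\lambda_j^2+A\big)$.

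It remains to promote this to a strict bound, and here I would feed in $A<n^2\gamma$. This is Lemma 6.2 with its estimate kept strict: the strictness is the defining property of a supermaximal singularity, $\deg Z_v<4n^2\gamma$ from Corollary 4.9, which forces $C\cdot L<n^2\gamma$; and since blowing up the point $Q$ lying on the surface can only lower a proper-transform intersection number, $A=(C^{(0)}\cdot L_0)\leqslant C\cdot L<n^2\gamma$. Because $A$ is one fixed number summed $i\geqslant 1$ times, $\sum_{j=1}^i A=iA<in^2\gamma=\sum_{j=1}^i n^2\gamma$, which upgrades the previous $\leqslant$ to the asserted $S_i<\sum_{j=1}^i\big(2n^2-2\lambda_j^2+n^2\gamma\big)$.

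The main obstacle is bookkeeping the signs rather than any delicate estimate: one must verify that all three discarded quantities — the terms $\lambda_j B$, the sum $2\sum_{j} d_h^{(j)}$, and (inside $A$) the correction introduced by blowing up $Q$ — are non-negative, and in particular that it is canonicity of the pair at the half-point $Q$ which guarantees $\nu_Q(D)\leqslant n/2$. The only genuinely non-formal input is locating the source of strictness, namely that supermaximality makes the bound of Lemma 6.2 strict; everything else is the telescoping above together with Lemma 6.4.
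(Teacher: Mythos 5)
Your argument is correct and is essentially the paper's own proof: the paper runs the recursion of Lemma 6.5 by induction with exactly the same bounds ($\alpha_i\leqslant 2n^2$ from Lemma 6.4, $C^{(0)}\cdot L_0\leqslant C\cdot L\leqslant\gamma n^2$ from Lemma 6.2, $n\geqslant 2\nu_Q(D)$ from canonicity at $Q$, and discarding $-2d_h^{(i-1)}$), while you merely unroll the same recursion by telescoping. If anything you are more careful than the paper in tracing the strict inequality back to the strictness of $\deg Z_v<4n^2\gamma$ in Corollary 4.9, which the paper leaves implicit when it passes from the non-strict bounds to the strict conclusion.
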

\begin{proof}
The inequality $n\geqslant 2\nu_Q(D)$ holds since the pair $(X,\frac{1}{n}\mathcal{M})$ is canonical at $Q$. By Lemma 6.5 for $i=1$ 
\begin{align*}
\beta_1+d_v^{(1)}=\alpha_1+(C^{(0)}\cdot L_0)-\lambda_1\big(n-2\nu_Q(D)\big)-2\lambda_1^2.
\end{align*}
Combining it with the bounds $\alpha_1\leqslant 2n^2$ and $C^{(0)}\cdot L_0\leqslant C\cdot L\leqslant \gamma n^2$ we get
\begin{align*}
\beta_1+d_v^{(1)}<2n^2+\gamma n^2-2\lambda_1^2.
\end{align*}

Now suppose the inequality holds for $i-1$. Then using the same bounds we get
\begin{align*}
&\beta_i+d_v^{(i)}=\alpha_{i}+C^{(0)}\cdot L_0+d_v^{(i-1)}+\big(\beta_{i-1}-2d_h^{(i-1)}\big)-\\
&-\lambda_i\big(n-2\nu_Q(D)\big)-2\lambda_i^2\leqslant (2n^2+\gamma n^2 -2\lambda_i^2)+(d_v^{(i-1)}+\beta_{i-1}).
\end{align*}
\end{proof}

\begin{Cor}
\begin{enumerate}[(i)]
\item Let $B$ be a fiber of a ruled surface $E^{(i)}$ then 
\begin{align*}
\mult_B Z^{(i)} = \mult_B C^{(i)}_v \leqslant \sum_{j=1}^i \big( 2n^2-2\lambda_i^2+n^2\gamma \big)
\end{align*}
\item Let $B$ be a point on $E^{(i)}$ then
\begin{align*}
\mult_B (C^{(i)}_v + C^{(i)}_h) \leqslant \sum_{j=1}^i \big( 2n^2-2\lambda_i^2+n^2\gamma \big). 
\end{align*}

\end{enumerate}
\end{Cor}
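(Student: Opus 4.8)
The plan is to deduce both inequalities from the estimate on the vertical degrees established in Corollary 6.6, namely $\beta_i+d_v^{(i)}<\sum_{j=1}^i\big(2n^2-2\lambda_i^2+n^2\gamma\big)$, by bounding each multiplicity by the numerical coefficients $d_v^{(i)}$, $d_h^{(i)}$ and $\beta_i$. I will use throughout that $C_v^{(i)}\equiv d_v^{(i)}f_i$ and $C_h^{(i)}\equiv d_h^{(i)}s_i+\beta_i f_i$, and that $C_h^{(i)}$ contains no exceptional section, so that $0\leqslant 2d_h^{(i)}\leqslant\beta_i$.

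For $(i)$ I would first argue that for a fiber $B$ of $E^{(i)}$ the only summand of $Z^{(i)}$ whose support can contain $B$ is $C_v^{(i)}$, so that $\mult_B Z^{(i)}=\mult_B C_v^{(i)}$. Indeed, by Theorem 5.1$(iv)$ the ladder is a simple chain, hence for $j<i$ the proper transform $E^{(j,i)}$ meets $E^{(i)}$ at most along the curve $s_i+2f_i$ and never along a fiber of $E^{(i)}$; therefore none of the transported pieces $\sigma^*C_h^{(j,i)}$, $\sigma^*C_v^{(j,i)}$, nor the pieces $Z_h^{(i)}$, $C^{(i)}$, $C_h^{(i)}$, $k_iL_i$, contains a fiber of $E^{(i)}$. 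As $C_v^{(i)}$ is an effective sum of fibers of total class $d_v^{(i)}f_i$, the multiplicity of any single fiber in it is at most $d_v^{(i)}$. Hence $\mult_B Z^{(i)}=\mult_B C_v^{(i)}\leqslant d_v^{(i)}\leqslant\beta_i+d_v^{(i)}$ and Corollary 6.6 gives $(i)$.

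For $(ii)$ I would bound the two contributions at a point $B\in E^{(i)}$ separately. Through $B$ there passes a unique fiber $f$, so exactly as above $\mult_B C_v^{(i)}\leqslant d_v^{(i)}$. Since $C_h^{(i)}$ is horizontal it does not contain $f$, and $f$ is smooth at $B$; the classical local intersection bound on the smooth surface $E^{(i)}$ then gives $\mult_B C_h^{(i)}=\mult_B C_h^{(i)}\cdot\mult_B f\leqslant\big(C_h^{(i)}\cdot f\big)_{E^{(i)}}=d_h^{(i)}$, where the last equality uses $s_i\cdot f_i=1$ and $f_i^2=0$. Adding the two estimates and using $d_h^{(i)}\leqslant 2d_h^{(i)}\leqslant\beta_i$ yields $\mult_B\big(C_v^{(i)}+C_h^{(i)}\big)\leqslant d_v^{(i)}+d_h^{(i)}\leqslant d_v^{(i)}+\beta_i$, and Corollary 6.6 again finishes the proof.

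The only delicate step is the identification $\mult_B Z^{(i)}=\mult_B C_v^{(i)}$ in part $(i)$: one has to be certain that no exceptional fiber of $E^{(i)}$ is hidden inside the pulled-back components of $Z^{(i)}$, and this is precisely where the simple-chain structure of Theorem 5.1$(iv)$ is needed. The remaining ingredients are elementary, namely that an effective sum of fibers has fiberwise multiplicities bounded by its numerical fiber degree, and that the local intersection multiplicity of a curve with a transverse smooth curve on a smooth surface dominates the product of their multiplicities.
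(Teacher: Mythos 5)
Your argument is correct and follows essentially the same route as the paper: bound $\mult_B C_v^{(i)}$ by $d_v^{(i)}$ and $\mult_B C_h^{(i)}$ by $d_h^{(i)}\leqslant\beta_i$, then invoke Corollary 6.6. The only difference is that you spell out why no fiber of $E^{(i)}$ can hide in the other summands of $Z^{(i)}$ (via the simple-chain structure of Theorem 5.1), a point the paper leaves implicit.
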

\begin{proof}
Clearly $\mult_B C^{(i)}_v$ is bounded by a vertical degree $d^{(i)}_v$ whether $B$ is a point or a curve. Thus the inequality holds if $B$ is a curve.

Similarly, $\mult_B C^{(i)}_h\leqslant d^{(i)}_h$, hence $\mult_B (C^{(i)}_v + C^{(i)}_h) \leqslant d^{(i)}_v+d^{(i)}_h$. Since $C^{(i)}_h$ does not contain the exceptional section $d^{(i)}_h\leqslant \beta_i$. Therefore by Corollary 6.6 the inequalities hold.
\end{proof}
%
%

\section{Supermaximal singularities upstairs}
In previous section we found an upper bound on the multiplicity of components of $Z^{(M)}$ at the center of $\nu$ on $X^{(M)}$. In this section we show that it contradicts Corti inequality.

\begin{Lemma}
The pair 
\begin{align*}
\bigg( X^{(M)},\frac{1}{n}\mathcal{M}^{(M)}-\Big(1-\frac{\lambda_1}{n}\Big)&E^{(1,M)}-\dots-\Big(M-\sum\frac{\lambda_i}{n}+M\gamma \Big)E^{(M)}-\gamma F^{(M)} \bigg)
\end{align*}
is strictly canonical at $\nu$.
\end{Lemma}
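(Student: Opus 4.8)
The plan is to recognise the displayed pair as the log pullback of the strictly canonical pair $\big(X,\frac{1}{n}\mathcal{M}-\gamma F\big)$ along the composite morphism $\sigma\colon X^{(M)}\to X$, where $\sigma=\sigma_Q\circ\sigma_1\circ\dots\circ\sigma_M$ first blows up the half-point $Q$ and then runs the ladder. By Remark 2.5 a log pullback has the same singularities as the original pair, hence the same discrepancy at every valuation, and in particular at $\nu$. Since $\gamma$ was chosen so that $\big(X,\frac{1}{n}\mathcal{M}-\gamma F\big)$ is strictly canonical at $\nu$, its crepant pullback is strictly canonical at $\nu$ too. So the entire content of the lemma is the identification of the log pullback boundary with the one displayed, which is a discrepancy computation.

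First I would compute, by induction up the ladder, the three quantities $a(E^{(i)},X,0)$, $\nu_{E^{(i)}}(\mathcal{M})$ and $\nu_{E^{(i)}}(F)$. The simple-chain property of Theorem 5.1(iv) is the key input: it forces $\mult_{L_{i-1}}E^{(j,i-1)}=0$ for $j<i-1$ and $\mult_{L_{i-1}}E_Q^{(i-1)}=0$, so that only the immediately preceding divisor contributes at each step. Each $\sigma_i$ is the blow up of a smooth rational curve in a threefold smooth near $L_{i-1}$, contributing discrepancy $1$; combined with $a(E_Q,X,0)=\frac{1}{2}$ (Lemma 2.9) and $\mult_{L_0}E_Q=0$ (which follows from $L_0\cdot E_Q=1$, Lemma 5.5) this gives $a(E^{(i)},X,0)=i$. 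The same bookkeeping with $\lambda_i=\mult_{L_{i-1}}\mathcal{M}^{(i-1)}$ yields $\nu_{E^{(i)}}(\mathcal{M})=\sum_{j=1}^{i}\lambda_j$. Finally $F^{(0)}=\sigma_Q^{*}F-2E_Q$ (Lemma 2.9), the same disjointness gives $\nu_{E^{(i)}}(E_Q)=0$, and $\nu_{E^{(i)}}(F^{(0)})=i$ by Theorem 5.1(iii); hence $\nu_{E^{(i)}}(F)=i$. Combining the three,
\begin{align*}
a\Big(E^{(i)},X,\tfrac{1}{n}\mathcal{M}-\gamma F\Big)=i-\sum_{j=1}^{i}\frac{\lambda_j}{n}+i\gamma,
\end{align*}
which is exactly the negative of the coefficient on $E^{(i,M)}$, while the proper transform $F^{(M)}$ retains coefficient $-\gamma$.

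The log pullback additionally carries a term on $E_Q^{(M)}$ with coefficient $-\big(\frac{1}{2}-\frac{\nu_Q(\mathcal{M})}{n}+2\gamma\big)$, absent from the displayed boundary. Omitting it is harmless: $E_Q$ sits at the end of the simple chain $E_Q,E^{(1)},\dots,E^{(M)}$, so for $M\geqslant 2$ the divisor $E_Q^{(M)}$ is disjoint from $E^{(M)}$, whereas by Proposition 6.1 the center of $\nu$ on $X^{(M)}$ lies on $E^{(M)}$; thus $\nu(E_Q^{(M)})=0$ and deleting this term does not change the discrepancy at $\nu$. With this one adjustment the displayed pair is precisely the log pullback, and the conclusion follows from Remark 2.5. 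I expect the main obstacle to be the inductive discrepancy bookkeeping — especially establishing $\nu_{E^{(i)}}(F)=i$, which is what produces the $i\gamma$ in the coefficients, together with the repeated use of the simple-chain property of Theorem 5.1(iv) to annihilate the cross-terms $\mult_{L_{i-1}}E^{(j,i-1)}$ at every stage.
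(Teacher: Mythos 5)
Your proposal follows the paper's proof essentially verbatim: identify the displayed pair as the log pullback of $\big(X,\tfrac{1}{n}\mathcal{M}-\gamma F\big)$, using the simple-chain structure of the ladder (Theorem 5.1) to get $a(E^{(i)},X,0)=i$, $\nu_{E^{(i)}}(\mathcal{M})=\sum_{j\leqslant i}\lambda_j$ and $\nu_{E^{(i)}}(F)=i$, and then invoke Remark 2.5. The one imprecision is your justification for dropping the $E_Q$-term: $E_Q^{(M)}$ need not be disjoint from $E^{(M)}$ (since $L_0\cdot E_Q=1$, the curve $E_Q^{(1)}\vert_{E^{(1)}}$ is a fiber of $E^{(1)}$ and therefore meets the section $L_1$, and this contact propagates up the chain); the correct and simpler reason, which is the one the paper uses, is that the center of $\nu$ on $X^{(M)}$ lies over $P\neq Q$ while $E_Q^{(M)}$ lies over $Q$, so $\nu(E_Q^{(M)})=0$ and the omitted term does not affect the discrepancy at $\nu$.
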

\begin{proof}
Since the dual graph of $E^{(i)}$ is a simple chain by Theorem 5.1
\begin{align*}
K_{X^{(M)}}-\sum_{i=1}^{M}iE^{(i)}-\frac{1}{2}E_Q\sim \sigma^*(K_X).
\end{align*}
We disregard $E_Q$ in equivalences since $E_Q$ is away from the center of $\nu$.
For a generic divisor $D\in\mathcal{M}$
\begin{align*}
D^{(M)}+\sum_{i=1}^M\Big(\sum_{j=1}^i \lambda_j\Big)E^{(i)}=\sigma^*(D)
\end{align*}
and by Theorem 5.1
\begin{align*}
F^{(M)}+\sum_{i=1}^{M}iE^{(i)}=\sigma^*(F).
\end{align*}
Thus the pair in the statement of the lemma is a log pullback of the pair
$\Big(X,\frac{1}{n}\mathcal{M}-\gamma F\Big)$. Hence by Remark 2.5 the pair is strictly canonical at $\nu$.
\end{proof}

Clearly $\nu$ and $X^{(0)}$ satisfy the requirements of Proposition 6.1, that is the center of $\nu$ on $X^{(M)}$ is not a point on the exceptional section of $E^{(M)}$. We consider the three possibilities for the center of $\nu$.


\subsection{Case A}
Suppose the center $B$ of $\nu$ on $X^{(M)}$ is a fiber of $E^{(M)}$. Then the only divisor in the boundary which contains $B$ is $E^{(M)}$. Thus the pair
\begin{align*}
\bigg( X^{(M)},\frac{1}{n}\mathcal{M}^{(M)}-\big(M-\sum\frac{\lambda_i}{n}+M\gamma\big)E^{(M)} \bigg)
\end{align*}
is strictly canonical at $\nu$. By Lemma 2.6
\begin{align*}
\mult_B Z^{(M)} \geqslant 4n^2\frac{Mn-\sum\lambda_i}{n}+4n^2M\gamma.
\end{align*}
Combining this inequality with Corollary 6.7 we get
\begin{align*}
2Mn^2+Mn^2\gamma-2\sum_{i=1}^M\lambda_i^2 > 4\big(Mn^2-n\sum\lambda_i+Mn^2\gamma \big)
\end{align*}
or, equivalently,
\begin{align*}
0>3Mn^2\gamma+2\sum_{i=1}^M \big( n^2-2n\lambda_i+\lambda_i^2 \big),
\end{align*}
contradiction.


\subsection{Case B}
Suppose the center $B$ of $\nu$ on $X^{(M)}$ is a point which is not on $E^{(M-1)}$. Then the only divisor in the boundary containing $B$ is $E^{(M)}$. Thus the pair
\begin{align*}
\bigg( X^{(M)},\frac{1}{n}\mathcal{M}^{(M)}-\big(M-\sum\frac{\lambda_i}{n}+M\gamma\big)E^{(M)} \bigg)
\end{align*}
is strictly canonical at $\nu$. The components of $Z^{(M)}$ which may pass through $B$ are $Z_h^{(M)}$, $C^{(M)}_v$, and $C^{(M)}_h$. By Corti inequality there is a number $0< t\leqslant 1$ such that
\begin{align*}
\mult_B Z_h^{(M)} + t\mult_B & \big(C^{(M)}_v + C^{(M)}_h\big) >4n^2\Big(1+t\frac{Mn-\sum\lambda_i}{n}+Mtn^2\gamma \Big)=\\
=&4n^2+4tMn^2+4tMn^2\gamma-4tn\sum\lambda_i.
\end{align*}
On the other hand $\mult_{B} Z_h^{(M)}\leqslant Z_h\cdot F= 2n^2$ and we have a bound on $\mult_{B} (C_h^{(M)}+C_v^{(M)})$ by Corollary 6.7. Combining the bounds we get
\begin{align*}
2n^2+2tMn^2+tMn^2\gamma-2t\sum_{i=1}^M \lambda_i^2>4n^2+4tMn^2+4tMn^2\gamma-4tn\sum_{i=1}^M\lambda_i.
\end{align*}
Rearranging the terms we find an equivalent inequality:
\begin{align*}
0>2n^2+3tMn^2\gamma+2t\sum_{i=1}^M (n-\lambda_i)^2,
\end{align*}
contradiction.

\subsection{Case C}
Suppose the center $B$ of $\nu$ on $X^{(M)}$ is a point on the intersection $E^{(M)}\cap E^{(M-1)}$. Clearly these are the only divisors of the boundary containing $B$. Let $M^-=M-1$ for the compactness of formulas. Then the pair
\begin{align*}
\bigg( X^{(M)},\frac{1}{n}\mathcal{M}^{(M)}-& \big( M-\sum\frac{\lambda_i}{n}+M\gamma \big) E^{(M)}-\big(M^--\sum\frac{\lambda_i}{n}+M^-\gamma \big)E^{(M^-)}\bigg)
\end{align*}
is strictly canonical at $\nu$. Hence, compared to the last case there are $2$ more cycles which may contain $B$: $C^{(M^-,M}_h$ and $C^{(M^-,M)}_v$. By Corti inequality there are numbers $0<t,t^-\leqslant 1$ such that
\begin{align*}
&\mult_B Z_h^{(M)}+t \mult_B \big(C^{(M)}_v + C^{(M)}_h \big)+t^-\mult_B \big(C^{(M^-,M)}_v + C^{(M^-,M)}_h \big)\geqslant\\
\geqslant& 4n^2 + 4tMn^2+4tMn^2\gamma-4tn\sum_{i=1}^M\lambda_i
+4t^-M^-n^2+4t^-M^-n^2\gamma-4t^-n\sum_{i=1}^{M^{-}}\lambda_i.
\end{align*}
On the other hand we have the bounds on multiplicities from Corollary 6.7. After combining the inequalities and rearranging the terms we get
\begin{align*}
0>2n^2 + 3tMn^2\gamma + 3t^-M^-n^2\gamma + 2t\sum_{i=1}^M (n-\lambda_i)^2 + 2t^-\sum_{i=1}^{M^{-}}(n-\lambda_i)^2,
\end{align*}
contradiction.

%
%

\subsection{Epilogue}
\begin{proof}[Proof of Theorem 1.1]
Suppose $X$ is not birationally rigid, that is there is a birational map $g$ to a Mori fiber space which does not satisfy requirements of Definition 4.1. Then by Proposition 4.3 there is a system $\mathcal{M}\subset \big| -nK_X+\gamma nF \big|$ such that one of the conditions $(i)$ and $(ii)$ holds. 

Suppose the pair $(X,\frac{1}{n}\mathcal{M})$ is not canonical at a curve $C$. By Proposition 4.4 the curve $C$ is a section. Then by Proposition 4.5 there is a birational involution $\chi_C$ such that $g\circ\chi_C$ does not satisfy conditions of Definition 4.1. Now we consider the map $g\circ \chi_C$ instead of $g$ and the linear system 
\begin{align*}
\chi_C(\mathcal{M})\subset \big| -n^\prime K_X+n^\prime \gamma^\prime \big|
\end{align*}
instead of $\mathcal{M}$. We repeat this process until either $n^\prime=0$, that is the new map $g^\prime=g\circ \chi_{C_1}\circ\dots\circ\chi_{C_k}$ is fiberwise (contradiction), or until the pair, corresponding to $g^\prime$ is canonical at all curves.

Now we are in the situation $(ii)$ of Proposition 4.3. Hence by Corollary 4.9 there is a supermaximal singularity. By Proposition 4.10 and Proposition 4.4 its center is a nonsingular point in the fiber containing half-point. There is a unique half-line passing through the center of $\nu$. Let us consider the associated ladder $\sigma_i:X^{(i)}\to X^{(i-1)}$. Let $D^{(i)}_1$ and $D^{(i)}_2$ be the proper transforms on $X^{(i-1)}$ of generic divisors $D_1,D_2\in\mathcal{M}$ and let $Z_i=D^{(i)}\cdot D^{(i)}$. Then as we have shown in section 6, there are bounds on multiplicities on components of the cycle $Z_i$. In section 7 we have proven that these bounds contradict Corti inequality. Thus there are no supermaximal singularities and $X$ is birationally rigid. 
\end{proof}

\end{document}